\documentclass[11pt, english]{article}
\usepackage[T1]{fontenc}
\usepackage[latin9]{inputenc}
\usepackage{textcomp}
\usepackage{amsmath}
\usepackage{amssymb}
\usepackage{stmaryrd}
\usepackage{babel}
\usepackage{algorithm}
\usepackage{algpseudocode}

\usepackage{amsmath,amssymb,amsthm,mathrsfs,hyperref,color}

\DeclareMathAlphabet{\mathpzc}{OT1}{pzc}{m}{it}
\usepackage[nameinlink]{cleveref}
\hypersetup{colorlinks={true},linkcolor={blue},citecolor=blue}

\hypersetup{
    colorlinks=true,
    linkcolor=blue!60!black,
    citecolor=blue!60!black,
    urlcolor=blue!60!black
}

\usepackage{mathtools}
\usepackage[x11names]{xcolor}
\newtagform{blue}{\color{blue}(}{)}

\usepackage{aliascnt}

\theoremstyle{plain}
\newtheorem{theorem}{Theorem}[section]

\newaliascnt{proposition}{theorem}
\newtheorem{proposition}[proposition]{Proposition}
\aliascntresetthe{proposition}

\newaliascnt{lemma}{theorem}
\newtheorem{lemma}[lemma]{Lemma}
\aliascntresetthe{lemma}

\newaliascnt{corollary}{theorem}

\aliascntresetthe{corollary}

\newaliascnt{assumption}{theorem}

\aliascntresetthe{assumption}

\theoremstyle{definition}
\newaliascnt{definition}{theorem}
\newtheorem{definition}[definition]{Definition}
\aliascntresetthe{definition}

\newaliascnt{remark}{theorem}
\newtheorem{remark}[remark]{Remark}
\aliascntresetthe{remark}

\newaliascnt{example}{theorem}

\aliascntresetthe{example}

\newcommand{\T}{\mathbb{T}}
\newcommand{\R}{\mathbb{R}}
\newcommand{\Z}{\mathbb{Z}}
\newcommand{\N}{\mathbb{N}}

\newcommand{\PP}{\mathcal{P}}
\newcommand{\A}{\mathcal{A}}

\newcommand{\LL}{\mathcal{L}}

\newcommand{\Inv}{\operatorname{Inv}}

\DeclareMathOperator*{\supp}{spt}

\numberwithin{equation}{section}
\usepackage{authblk}

\title{\bf Semi-Discrete Approximation of  Aubry and   Mather sets}

\author[1]{Fabio Camilli}
\author[2]{Cristian Mendico}
\affil[1]{Dip. di Ingegneria e Geologia, Univ. "G. D'Annunzio" Chieti-Pescara, viale Pindaro 42, 65127 Pescara (Italy) \\ \href{mailto:fabio.camilli@unich.it}{fabio.camilli@unich.it}}
\affil[2]{Institut de Math\'ematique de Bourgogne - UMR 5584 CNRS, Universit\'e Bourgogne Europe\\ \href{mailto:cristian.mendico@u-bourgogne.fr}{cristian.mendico@u-bourgogne.fr}}

\begin{document}

	\maketitle

	\begin{abstract}
		We study the semi-discrete approximation of Aubry and Mather sets for Tonelli Lagrangians on the flat torus. 
Starting from the discrete Lax--Oleinik equation, we introduce natural discrete analogues of these sets and analyze their convergence, as the time step tends to zero, in the sense of Kuratowski.

Our results show that the semi-discrete variational framework captures not only the ergodic constant, but also the minimizing invariant geometry of the continuous dynamics. 
In full generality, we prove upper Kuratowski limit inclusions for both the Aubry and Mather sets. 
For the Aubry set, we establish full convergence under a hyperbolicity assumption on the continuous Aubry set. 
For the Mather set, we prove full convergence under a genericity assumption ensuring that the Lagrangian admits finitely many ergodic Mather measures.

This provides a first rigorous step toward a structure-preserving approximation theory for Aubry and Mather sets in the Tonelli setting, and clarifies how discrete variational models recover the central geometric objects of weak KAM and Aubry--Mather theory.

		\vspace{0.25cm}
		\noindent\textbf{Key words:} Aubry-Mather theory, weak KAM theory, Tonelli Lagrangians, discrete Lax--Oleinik equation, Kuratowski convergence.
		\\
		\noindent\textbf{2020 AMS:} 35F21; 37J06; 37J51; 49L25; 49M25.
	\end{abstract}

\section{Introduction}\label{sec:intro}
The Aubry and Mather sets are cornerstones of the variational and dynamical analysis of the stationary Hamilton--Jacobi equation
\begin{equation}\label{HJ}
	H(x,Du(x)) = \alpha(H), \qquad x \in \T^d,
\end{equation}
where $\T^d := \R^d/\Z^d$ denotes the $d$--dimensional flat torus, $H \colon \T^d \times \R^d \to \R$ is a Tonelli Hamiltonian and the solution is intended in the viscosity sense \cite{CrandallLions1983,Barles1994, BardiCapuzzo}. 
Equation \eqref{HJ} arises naturally in optimal control, classical mechanics, weak KAM theory, and ergodic problems, and its analysis display a rich interplay between partial differential equations, calculus of variations, and dynamical systems.

From the variational viewpoint, the Aubry and Mather sets encode the long--time minimizing behavior of the dynamics associated with $H$.
The Aubry set is characterized by globally calibrated curves for critical viscosity solutions of \eqref{HJ}, and enjoys remarkable structural properties such as invariance under the Euler--Lagrange flow and graph--like regularity. 
The Mather set, on the other hand, is defined as the union of the supports of action--minimizing invariant probability measures, and plays a fundamental role in the ergodic theory of Lagrangian systems--see Mather~\cite{Mather2003}.
These sets provide a precise geometric description of globally minimizing trajectories and invariant measures, and form the core of modern weak KAM theory \cite{Fathi2008,FathiSiconolfi,Mather1991, Sorrentino2015}--see also the pioneering works of Aubry~\cite{Aubry1983}, Bangert~\cite{Bangert1988} and \cite{Contreras1996,Iturriaga1999}.

Despite their theoretical importance, Aubry and Mather sets pose severe challenges from a numerical and approximation standpoint.
A fundamental difficulty stems from the fact that solutions of the ergodic Hamilton--Jacobi equation \eqref{HJ} are, in general, only Lipschitz continuous and develop singularities.
These singularities propagate along Lipschitz trajectories and are intrinsic features of the problem, reflecting the coexistence of multiple minimizing characteristics.
As a consequence, standard numerical approaches for first--order Hamilton--Jacobi equations, such as probabilistic dynamic programming or semi--Lagrangian schemes, are not well--suited to capture the fine structure of the ergodic regime (see, e.g., \cite{KushnerDupuis, FalconeFerretti, CamilliSilva2012,Camilli,Gomes,Gomes_Ob}).
While such methods perform efficiently for discounted or time--dependent problems, they tend to smear out singularities, introduce artificial selection mechanisms, or lose sensitivity to the underlying minimizing dynamics when applied to the critical equation \eqref{HJ}.

This intrinsic limitation highlights the need for approximation schemes that are genuinely variational in nature and capable of preserving the geometric structures of Aubry--Mather theory.
In particular, any meaningful discretization should retain the connection with calibrated curves, minimizing measures, and invariant sets, rather than merely approximating the value function.
This difficulty is not merely a matter of numerical resolution or low regularity: it is structural. 
Indeed, Aubry and Mather sets are defined through global minimizing properties, nonlocal selection mechanisms, and invariant objects that are extremely sensitive to perturbations. 
In particular, one is not simply approximating a viscosity solution, but rather a singular variational skeleton of the dynamics, where small discretization errors may alter the selection of calibrated curves, destroy minimizing configurations, or generate spurious invariant patterns.

The semi--discrete framework considered in this paper originates from a time discretization of the Lax--Oleinik semigroup introduced in \cite{Garibaldi,Su}, and further developed in a series of works connecting weak KAM theory, discrete dynamics, and monotone approximation schemes \cite{Iturriaga-Wang,Zavidovique}.
This approach replaces continuous trajectories by discrete configurations and the action integral by a discrete sum, while preserving the variational structure of the problem.
Unlike probabilistic or purely PDE--based numerical schemes, the resulting dynamics admits a well--defined notion of calibrated configurations and discrete holonomic measures, thus remaining faithful to the spirit of Aubry--Mather theory.

Previous works within this framework focused primarily on the convergence of the discrete ergodic constant as the time step tends to zero.
To the best of our knowledge, no previous result establishes the convergence of semi-discrete approximations for Aubry and Mather sets themselves in the Tonelli setting. 
From this viewpoint, the present work appears to provide the first rigorous bridge between discrete variational approximations and the geometric objects at the core of Aubry--Mather theory. 
In particular, it opens the way to a genuinely structure-preserving numerical analysis of these sets, going beyond the sole approximation of the effective Hamiltonian or of weak KAM solutions.

More precisely, for a fixed time step $\tau>0$, we define a discrete Lagrangian system and introduce the notions of discrete calibrated sequences and discrete holonomic probability measures.
These objects allow us to construct discrete counterparts of the Aubry and Mather sets, denoted by $\widetilde{\mathcal A}_L^\tau$ and $\widetilde{\mathcal M}_L^\tau$.
We first show that these sets are nonempty and compact, that the discrete Aubry set admits a characterization in terms of globally calibrated discrete trajectories, and that the inclusion
\[
\widetilde{\mathcal M}_L^\tau \subset \widetilde{\mathcal A}_L^\tau
\]
holds for every $\tau>0$, in complete analogy with the continuous theory. We then address the convergence of these sets as $\tau \to 0$ in the sense of Kuratowski.
By exploiting the convergence of discrete calibrated configurations to globally minimizing trajectories and the convergence of discrete holonomic minimizers to Mather measures, we prove the upper limit inclusions
\[
\limsup_{\tau \to 0} \widetilde{\mathcal A}_L^\tau \subset \widetilde{\mathcal A}_L,
\qquad
\limsup_{\tau \to 0} \widetilde{\mathcal M}_L^\tau \subset \widetilde{\mathcal M}_L,
\]
where $\widetilde{\mathcal A}_L$ and $\widetilde{\mathcal M}_L$ denote the continuous Aubry and Mather sets.

The analysis of the lower Kuratowski limit is substantially more delicate and reflects the intrinsic instability of minimizing structures.
For the discrete Aubry sets, we establish full convergence under a hyperbolicity assumption on the continuous Aubry set, which allows us to invoke shadowing arguments for pseudo--orbits of the discrete dynamics.
For the discrete Mather sets, we prove convergence under a genericity assumption on the Lagrangian ensuring that only finitely many ergodic Mather measures exist \cite{BernardContreras2008}.

Overall, this work shows that the semi-discrete variational framework is not only consistent at the level of the ergodic constant, but is also rich enough to recover the underlying minimizing invariant geometry. 
This is a decisive point if one aims at numerical methods that are faithful to Aubry--Mather theory: the relevant issue is not merely to approximate a critical value, but to capture the sets carrying the global minimizing dynamics. 
In this sense, the present results provide a rigorous foundation for future fully discrete and computational approaches to Aubry and Mather sets.

\smallskip 

The paper is organized as follows. In \Cref{sec:assumptions}, we recall elements of the continuous theory. In \Cref{sec:discrete}, we introduce the semi-discrete framework and define the discrete Aubry and Mather sets. \Cref{sec:conv_mather} is devoted to the proof of the convergence results for the Mather set while \Cref{sec:conv_aubry} to that for the Aubry set. In the appendices  we prove some  technical results we exploit in the previous sections.
\section{Assumptions and definitions}\label{sec:assumptions}
We  recall the definition and main properties of the Aubry and Mather sets in the continuous setting, following the classical framework of weak KAM theory. For a comprehensive account, we refer the reader to \cite{Fathi2008, Sorrentino2015}. \par
We assume that	$H:\T^d \times \R^d \to \R$  in \eqref{HJ} is a Tonelli Hamiltonian satisfying:
	\begin{itemize}
		\item[($i$)] \textbf{Regularity}: $H \in C^2(\T^d \times \R^d)$;
		\item[($ii$)] \textbf{Strict convexity}: for every $(x,p)$, the matrix $D^2_{pp}H(x,p)$ is positive definite;
		\item[($iii$)] \textbf{Superlinearity}: for every $K>0$, there exists $C(K)\in\R$ such that
		\[
		H(x,p) \ge K|p| + C(K), \qquad \forall (x,p)\in\T^d \times \R^d.
		\]
	\end{itemize}
 We denote by 
	$L:\T^d \times \R^d \to \R$ its Legendre transform, i.e.,
	\[
	L(x,v) = \sup_{p\in\R^d} \big\{  p\cdot v - H(x,p) \big\}.
	\]
	Then $L$ is of class $C^2$, strictly convex in $v$ and superlinear, namely, for every $K>0$, there exists $C(K)\in\R$ such that
	\begin{equation}\label{superlinearity}
		L(x,v) \ge K|v| + C(K), \qquad \forall (x,v)\in\T^d \times \R^d.
	\end{equation}	
	For $(x,v)\in \T^d \times \R^d$, we denote the Euler--Lagrange flow by $\phi_t^L(x,v) = \big(\gamma_{(x,v)}(t), \dot\gamma_{(x,v)}(t)\big)$,
	where $\gamma_{(x,v)}$ solves
	\[
	\frac{d}{dt} D_v L\big(\gamma_{(x,v)}(t), \dot\gamma_{(x,v)}(t)\big)
	=
	D_x L\big(\gamma_{(x,v)}(t), \dot\gamma_{(x,v)}(t)\big),
	\]
	and, since $L$ is Tonelli, the flow $\phi_t^L$ is complete on $\R$.

 Let $u \in C(\T^d)$ and $c\in\R$. We say that
		\begin{itemize}
			\item[($i$)]  $u$ is \emph{$c$-dominated} by $L$ if, for every $a<b$ and every continuous, piecewise $C^1$ curve $\gamma:[a,b]\to\T^d$,
			\[
			u(\gamma(b)) - u(\gamma(a))
			\le
			\int_a^b L(\gamma(s),\dot\gamma(s))\,ds + c(b-a);
			\]
			\item[($ii$)] the curve $\gamma:[a,b]\to\T^d$ is   \emph{$(u,L)$-calibrated} if equality holds, namely
			\[
			u(\gamma(b)) - u(\gamma(a))
			=
			\int_a^b L(\gamma(s),\dot\gamma(s))\,ds + c(b-a).
			\]
		\end{itemize}
		Let $u \in C(\T^d)$ be a viscosity solution of \eqref{HJ}, define
	\begin{equation}\label{calibrated_set}
	\widetilde\Sigma_L :=
	\Big\{(x,v)\in \T^d \times \R^d : 
	\gamma_{(x,v)} \ \text{is $(u,L)$-calibrated on } (-\infty,0]
	\Big\}.
	\end{equation}
The set $\widetilde\Sigma_L$ is non empty, backward invariant and for $t>0$, $\phi_{-t}^L(\widetilde\Sigma_L)$ is compact.
\begin{definition}
		The \emph{Aubry set} is defined by
		\[
		\widetilde{\A}_L := \bigcap_{t>0} \phi_{-t}^L(\widetilde\Sigma_L).
		\]
	\end{definition}
Next we introduce the Mather set.	
	\begin{definition}\label{closed_measure}
		A probability measure $\mu$ on $\T^d \times \R^d$ is called a \emph{Mather measure} if:
		\begin{itemize}
			\item[($i$)] $\mu$ is \emph{closed}, i.e.,
			\[
			\int_{\T^d \times \R^d} v \cdot D\varphi(x)\, d\mu(x,v) = 0
			\qquad \forall \varphi \in C^1(\T^d);
			\]
			\item[($ii$)] $\mu$ minimizes the action
			\[
			\int_{\T^d \times \R^d} L(x,v)\, d\mu(x,v)
			\]
			among all closed probability measures.
	    	\end{itemize}
	The \emph{Mather set} is defined as the closure of the union the supports of the Mather measure, i.e
		\[
		\widetilde{\mathcal M}_L
		:=
		\overline{\bigcup \{\operatorname{spt}(\mu):\,  \mu \ \text{Mather measure}\}}
		\;\subset\;
		\T^d \times \R^d.
		\]

	\end{definition}
	We recall some important properties of   Aubry and Mather sets.
\begin{proposition}
The Aubry set $\widetilde{\A}_L$ and the Mather set $\widetilde{\mathcal M}_L$ satisfy:
	\begin{itemize}
		\item[($i$)] $\widetilde{\A}_L$ and $\widetilde{\mathcal M}_L$ are nonempty, compact, and invariant under the Euler--Lagrange flow $\phi_t^L$;
		
		\item[($ii$)] $\widetilde{\mathcal M}_L \subset \widetilde{\A}_L$;
		
		\item[($iii$)] if $(x,v)\in \widetilde{\A}_L$, then the trajectory $\gamma_{(x,v)}$  given by the Euler-Lagrange flow is $(u,L)$-calibrated on $\R$; in particular, trajectories in $\widetilde{\mathcal M}_L$ are globally minimizing;
		
		\item[($iv$)] the projection $\pi:\widetilde{\A}_L \to \T^d$ is injective and $\widetilde{\A}_L$ is a Lipschitz graph over $\A_L := \pi(\widetilde{\A}_L)$, namely
		\[
		\widetilde{\A}_L
		=
		\Big\{\big(x, D_pH(x,Du(x))\big): x \in \A_L\Big\}.
		\]
	\end{itemize}
\end{proposition}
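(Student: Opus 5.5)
We prove the four items in the order (iii), (iv), (i), (ii), since compactness and invariance use calibration, and the inclusion uses the graph property and the minimizing characterization of Mather measures. Throughout, $c=\alpha(H)$, and $u$ is a viscosity solution of \eqref{HJ}. By standard weak KAM theory, $u$ is a fixed point of the Lax--Oleinik semigroup: it is $c$-dominated, and for every $x$ there is a $(u,L)$-calibrated curve on $(-\infty,0]$ ending at $x$. This curve is $C^2$ and solves the Euler--Lagrange equation by Tonelli regularity, so it is an orbit of $\phi^L_t$. This already shows $\widetilde\Sigma_L\neq\emptyset$.

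\textbf{Step 1 (calibration, item (iii)).} Let $(x,v)\in\widetilde{\A}_L$. For every $t>0$ there is $(y_t,w_t)\in\widetilde\Sigma_L$ with $\phi^L_{-t}(y_t,w_t)=(x,v)$. Hence $\gamma_{(x,v)}$ is calibrated on $(-\infty,t]$: it is a time-shift of a curve calibrated on $(-\infty,0]$. Letting $t\to\infty$ gives calibration on $\R$. Conversely, if $\gamma_{(x,v)}$ is calibrated on $\R$, then $\phi^L_t(x,v)\in\widetilde\Sigma_L$ for all $t>0$, so $(x,v)\in\widetilde{\A}_L$. This characterization is the key tool for the remaining items.

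\textbf{Step 2 (graph property, item (iv)).} We use Fathi's argument. At interior points of a calibrated curve, $u$ is differentiable, and
\[
Du(\gamma(s))=D_vL(\gamma(s),\dot\gamma(s)).
\]
This follows by comparing the calibration identity with domination along competitor curves, which gives sub- and super-differentials at $\gamma(s)$ that coincide. For $(x,v)\in\widetilde{\A}_L$, the point $s=0$ is interior to a curve calibrated on $\R$. Hence $v=D_pH(x,Du(x))$, so $v$ is determined by $x$ and the projection $\pi$ is injective.

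Lipschitz regularity of the inverse map $x\mapsto v$ is the \textbf{main obstacle}. I would use Mather's crossing lemma, or equivalently the local $C^{1,1}$ estimate on $u$ along calibrated curves coming from semiconcavity. Along the backward calibrated piece, $u$ is semiconcave with a uniform constant. Along the forward piece, $u$ is bounded below by a semiconvex function, obtained by applying the dual semigroup $T^+_t$ to the conjugate solution. Touching above and below by paraboloids with uniform constants gives
\[
|Du(x)-Du(y)|\le K|x-y|\quad\text{on }\A_L.
\]
Since $D_pH$ is $C^1$, the map $x\mapsto D_pH(x,Du(x))$ is then Lipschitz on $\A_L$.

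\textbf{Step 3 (item (i)).} Invariance of $\widetilde{\A}_L$ is immediate from Step 1, because time-shifts of curves calibrated on $\R$ are again calibrated on $\R$. For compactness, calibrated curves have velocities bounded by a constant depending only on $\|u\|_{\mathrm{Lip}}$ and the superlinearity \eqref{superlinearity}. The calibration identity is preserved under $C^1_{loc}$ limits, by continuity of the flow and of the action. Therefore $\widetilde{\A}_L$ is closed and bounded. For nonemptiness, take a curve $\gamma$ calibrated on $(-\infty,0]$ and a limit point $(x,v)$ of $(\gamma(-n),\dot\gamma(-n))$. Shifting and using closedness of calibration under limits, $\gamma_{(x,v)}$ is calibrated on $\R$.

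For the Mather set, we use Mañé's theorem that minimizing closed measures are $\phi^L$-invariant, with minimal action $-c$. Existence of a Mather measure follows from tightness (superlinearity) and lower semicontinuity of the action. Invariance of supports gives invariance of $\widetilde{\mathcal M}_L$. Compactness will follow from (ii).

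\textbf{Step 4 (item (ii)).} Let $\mu$ be a Mather measure. Domination gives $L(x,v)+c\ge Du(x)\cdot v$ wherever $u$ is differentiable. To handle nondifferentiability, I would first replace $u$ by a $C^1$ critical subsolution $w$, provided by Fathi--Siconolfi. This gives
\[
\int\big(L+c-Dw\cdot v\big)\,d\mu=\int (L+c)\,d\mu=0,
\]
using closedness for the first equality and minimality for the second. Since the integrand is nonnegative, it vanishes $\mu$-a.e. Combined with invariance and Birkhoff's theorem, this yields that every orbit in $\operatorname{spt}\mu$ is calibrated on $\R$ for all such $w$, and hence for $u$ as well. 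By Step 1, $\operatorname{spt}\mu\subset\widetilde{\A}_L$. Closedness of $\widetilde{\A}_L$ then gives $\widetilde{\mathcal M}_L\subset\widetilde{\A}_L$. Global minimality of trajectories in $\widetilde{\mathcal M}_L$ then follows from (iii).
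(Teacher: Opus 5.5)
The paper gives no proof of this proposition; it quotes these as classical facts from weak KAM theory (Fathi, Sorrentino, Mather). Your outline reconstructs the standard arguments, and Steps 1--3 are sound. These include:
\begin{itemize}
\item the identification of $\widetilde{\A}_L$ with the set of $(x,v)$ whose orbit is $(u,L)$-calibrated on $\R$;
\item Fathi's differentiability of $u$ at interior points of calibrated curves;
\item the squeeze of $u$ between a uniformly semiconcave majorant and a uniformly semiconvex minorant touching at points of $\A_L$ (the minorant is $u_+=\lim_{t\to\infty}T^+_t u\le u$, or directly $y\mapsto u(\gamma(1))-h_1(y,\gamma(1))-\alpha(H)$, with equality at $\gamma(0)$ coming from forward calibration);
\item Ma\~n\'e's invariance theorem for Mather measures.
\end{itemize}

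The one step that does not follow as written is in Step 4. From ``calibrated for every $C^1$ critical subsolution $w$'' you conclude ``hence for $u$ as well.'' Existence of a $C^1$ critical subsolution, which is all you cite, does not give this, and calibration does not transfer from one subsolution to another in general. This matters here because the paper's $\widetilde{\A}_L$ is built from one fixed viscosity solution $u$. When there are several static classes, that set can contain heteroclinic orbits calibrated by $u$ but not by other critical subsolutions. So what you must show is calibration by this specific $u$.

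You can repair the step in two ways.
\begin{itemize}
\item Invoke the uniform density of $C^1$ critical subsolutions among all critical subsolutions (Fathi--Siconolfi, Bernard), and pass to the limit in $w_n(\gamma(b))-w_n(\gamma(a))=\int_a^b\bigl(L+\alpha(H)\bigr)$.
\item More simply, drop $w$ entirely. Since $\mu$ is invariant, for each $t>0$ consider
\[
F_t(z):=\int_0^t\bigl(L+\alpha(H)\bigr)(\phi^L_s z)\,ds-u\bigl(\pi\phi^L_t z\bigr)+u(\pi z).
\]
This function is continuous and nonnegative by domination, and $\int F_t\,d\mu=t\int\bigl(L+\alpha(H)\bigr)\,d\mu=0$. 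Hence $F_t\equiv 0$ on $\operatorname{spt}\mu$. Invariance of $\operatorname{spt}\mu$ then gives $u$-calibration on all of $\R$, so $\operatorname{spt}\mu\subset\widetilde{\A}_L$ by your Step 1.
\end{itemize}
Birkhoff's theorem is not needed in either version, because the vanishing integrand is continuous and therefore vanishes on the whole support.
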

We give  the definition of Kuratowski convergence  which will be exploited in the following
for the convergence of Aubry and Mather sets.
\begin{definition}\label{Kuratowski}
	Let $\{A_k\}_{k\in\mathbb{N}}$ be a sequence of subsets of $\mathbb{R}^d$.
	The \emph{upper} and \emph{lower Kuratowski limits} are defined by
	\[
	\limsup_{k\to\infty} A_k=\bigcap_{k=0}^{\infty} \overline{\bigcup_{i\ge k} A_i},
	\qquad
	\liminf_{k\to\infty} A_k=\bigcup_{k=0}^{\infty} \bigcap_{i\ge k}\overline{A_i}
	\]
	We say that $A_k$ converges to $A$ in the sense of Kuratowski, and write
	$A_k \xrightarrow{K} A$, if
	\[
	\liminf_{k\to\infty} A_k = \limsup_{k\to\infty} A_k = A.
	\]
\end{definition}
We have the following characterization of the Kuratowski convergence
\begin{lemma}
	Let $\{A_k\}_{k\in\mathbb{N}} \subset \mathbb{R}^d$ and $A\subset \mathbb{R}^d$.
	Then $A_k \xrightarrow{K} A$ if and only if the following two conditions hold:
	\begin{enumerate}
		\item for every sequence $x_{k_j}\in A_{k_j}$ with $x_{k_j}\to x$,
		one has $x\in A$;
		\item for every $x\in A$ there exists a sequence $x_k\in A_k$ such that
		$x_k\to x$.
	\end{enumerate}
\end{lemma}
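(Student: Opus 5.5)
The plan is to unfold the set-theoretic definitions of the upper and lower Kuratowski limits and show that conditions 1 and 2 are exactly the statements $\limsup_k A_k\subset A$ and $A\subset\liminf_k A_k$. Once that is done, the general chain $\liminf_k A_k\subset\limsup_k A_k$ closes the circle.

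First I will record two pointwise descriptions, using that $\mathbb{R}^d$ is a metric space with countable neighbourhood bases.

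\emph{Upper limit.} A point $x$ belongs to $\limsup_k A_k=\bigcap_k\overline{\bigcup_{i\ge k}A_i}$ if and only if there are indices $k_1<k_2<\dots$ and points $x_{k_j}\in A_{k_j}$ with $x_{k_j}\to x$. For the direct implication, I use that for every $j$ the ball $B(x,1/j)$ meets $\bigcup_{i\ge k}A_i$ for every $k$. So I can choose inductively $k_j>k_{j-1}$ and $x_{k_j}\in A_{k_j}\cap B(x,1/j)$. The converse is immediate, since the tail of such a sequence lies in $\bigcup_{i\ge k}A_i$.

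\emph{Lower limit.} I read $\liminf_k A_k$ in the standard sense $\{x:\operatorname{dist}(x,A_k)\to0\}$, equivalently $\bigcap_{\varepsilon>0}\bigcup_k\bigcap_{i\ge k}\bigl(A_i+B(0,\varepsilon)\bigr)$. Under this reading, $x\in\liminf_k A_k$ if and only if there exist $x_k\in A_k$ (for all $k$ large) with $x_k\to x$. The direct implication is a diagonal choice: take $x_k$ with $|x_k-x|<\operatorname{dist}(x,A_k)+1/k$. The converse is trivial. I will also note that the displayed formula $\bigcup_k\bigcap_{i\ge k}\overline{A_i}$ is literally smaller than this set. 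The intended meaning is clearly the neighbourhood version, and I would state explicitly that this is the reading used, since it is the only way condition 2 can match.

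\emph{The equivalence.} If $A_k\xrightarrow{K}A$, then condition 1 is $\limsup_k A_k\subset A$ by the upper-limit description, and condition 2 is $A\subset\liminf_k A_k$ by the lower-limit description. Conversely, conditions 1 and 2 give
\[
A\subset\liminf_k A_k\subset\limsup_k A_k\subset A,
\]
where the middle inclusion holds because a full sequence $x_k\to x$ has convergent subsequences. Hence all three sets coincide.

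The only delicate step is the lower-limit characterization, since the formula as written must be read in the neighbourhood sense. Condition 2 also only requires $x_k\in A_k$ for $k$ large, because some $A_k$ may be empty; this is harmless. Everything else is routine diagonal extraction.
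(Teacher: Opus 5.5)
Your proof is correct. The paper states this lemma without proof, quoting it as a standard characterization, so there is no argument of the paper to compare with. Yours is the standard one: condition 1 is exactly $\limsup_k A_k\subset A$, by the sequential description of the upper limit. Condition 2 is exactly $A\subset\liminf_k A_k$, by the description $\operatorname{dist}(x,A_k)\to0$. The general inclusion $\liminf\subset\limsup$ then closes the chain.

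Your caveat about the lower limit is a genuine correction of the statement, not pedantry. With the paper's displayed formula $\liminf_k A_k=\bigcup_k\bigcap_{i\ge k}\overline{A_i}$, the ``if'' direction is false. Take $d=1$, $A_k=\{1/k\}$ and $A=\{0\}$. Both conditions hold, yet $\bigcup_k\bigcap_{i\ge k}\overline{A_i}=\emptyset$ while $\bigcap_k\overline{\bigcup_{i\ge k}A_i}=\{0\}$. So the sequence is not Kuratowski convergent in the paper's literal sense.

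The paper also relies later, in the proof of \Cref{KMather}, on the lower limit being closed. This holds for your neighbourhood version but can fail for the displayed increasing union of closed sets: for $A_i=\{1/j:1\le j\le i\}$ that union is $\{1/j:j\ge1\}$, which is not closed.

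Your other two readings are likewise necessary for the statement to be true. Condition 1 must use strictly increasing indices $k_j$. Condition 2 can only require $x_k\in A_k$ for $k$ large, since some $A_k$ may be empty.

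Finally, your argument uses nothing beyond the metric structure. It therefore applies verbatim in $\T^d\times\R^d$, which is where the paper actually uses the lemma.
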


\section{Discrete   Aubry-Mather theory}\label{sec:discrete}
In this section, we introduce a semi-discrete version of Aubry--Mather theory, following the discretization procedure proposed in \cite{Garibaldi,Su}. The approach is based on a time discretization of the Lagrangian dynamics with time step $\tau>0$, where trajectories are approximated by sequences of points and the action functional is replaced by a discrete sum. More precisely, a periodic function $u$ is a viscosity solution of \eqref{HJ} if and only if it satisfies the Lax-Oleinik equation
\begin{equation*}\label{continuous_Lax_Oleinik}
	u(y) -\alpha(H)t=\inf_{x \in\R^d}\{u(x)+h_t(x,y)\}\qquad t>0,\, y\in \T^d
\end{equation*}
where 
\[
h_t(x,y)=\inf\left\{\int_0^tL(\gamma,\dot \gamma)ds:\,\gamma:[0,t]\to\T^d, \gamma\in W^{1,1},\, \gamma(0)=x,\,\gamma(t)=y\right\}.
\]
Replacing the continuous action  by a discrete action of the form
\[
\sum_{k} \tau\, L\!\left(x_k, \frac{x_{k+1}-x_k}{\tau}\right),
\]
and approximating the continuous trajectory $\gamma$  at time $\tau$ with $x+\tau v$ where $v=(y-x)/\tau$, we get  the discrete Lax-Oleinik equation
\begin{equation} \label{discrete_laxoleinik_classical}
	u_{\tau} (y) +  \bar{L} (\tau) \tau = \inf_{x \in \R^d} \left( u_{\tau} (x) + \LL_{\tau} (x,y)\right) , \quad \forall y \in \T^d
\end{equation}
where the discrete action function is defined  by
\begin{equation*}\label{sec_discrete}
    \LL_{\tau} (x,y) := \tau  L \left(x, \frac{y-x}{\tau} \right) , \quad \forall x,y \in \T^d.
\end{equation*}
It is proved in \cite[Theorem 9]{Su} that there is a unique   $\bar{L} (\tau) \in \R$ such that  equation \eqref{discrete_laxoleinik_classical} has a continuous  solution $u_{\tau}$ and moreover
\begin{equation*}\label{conv_ergodic_const}
	\bar{L} (\tau) \rightarrow  -\alpha(H)\quad \text{as} \, \tau \rightarrow 0.
\end{equation*}
 By the assumptions on the Lagrangian  $L$, for any $x \in \T^d$, there exists $y \in \T^d$ for which the infimum in \eqref{discrete_laxoleinik_classical} is obtained, i.e.,
$$
u_{\tau} \left( x \right) + \tau \bar{L} (\tau)= u_{\tau} (y) + \LL_{\tau}(y,x).
$$
Arguing inductively, for any $x$ we  can find a sequence  $\left\{x_{-k}\right\}_{k=0}^{+\infty}$ with $x_0=x$ such that 
	\begin{equation}\label{discrete_calibrated}
		u_{\tau} \left( x_{-k} \right) + \tau \bar{L} (\tau)= u_{\tau} (x_{-k-1}) + \LL_{\tau}(x_{-k-1} ,x_{-k}), \quad \text{for any $k \geq 0$}.
	\end{equation}
A   sequence  $\left\{x_{-k}\right\}_{k=0}^{+\infty}$ with $x_0=x$  satisfying \eqref{discrete_calibrated}  is said  a \textit{calibrated configuration} for $u_{\tau} (x)$. In analogy with \eqref{calibrated_set}, we introduce the set
\begin{align*}
	\widetilde{\Sigma}_{L}^\tau := \left\{ (x,v) \in \T^d \times \R^d \middle\vert   \right. 
	& \text{there exists a calibrated configuration}   \\
	& \left. \left\{x_{-k}\right\}_{k=0}^{+\infty}\,\,\text{for}\,\, u_{\tau}(x +\tau v)\,\, \text{satisfying}\,\,x_{-1} =x, \,\, \frac{x_0 - x_{-1}}{\tau} =v \right\}.
\end{align*}
Moreover, for any $n \in \N$ and   $(x,v) \in \T^d \times \R^d$, we define a map
$
\Psi^n_{L, \tau}: \T^d \times \R^d \rightarrow \T^d \times \R^d
$
by
\begin{align*}
	\Psi^n_{L, \tau} (x, v):= \left\{ \left( x_{-n-1}, \frac{x_{-n} - x_{-n-1}}{\tau} \right) \middle\vert  \right. 
	& \text{there exists a calibrated configuration}\,\,\left\{x_{-k}\right\}_{k=0}^{+\infty} \\
	& \left. \text{for}\,\, u_{\tau} (x +\tau v)\,\, \text{satisfying}\,\, x_{-1} =x,\,\, v = \frac{x_0 -x_{-1}}{\tau} \right\}.
\end{align*}
Thus we can define the discrete version of the discrete Aubry set as follows.
\begin{definition} \label{def of discrete aubry set}
	The discrete  Aubry set is given by
	$$
	\widetilde{\A}^\tau_L := \bigcap_{n \in \N} \Psi^n_{L, \tau} \left( \widetilde{\Sigma}_{L}^\tau \right).
	$$
\end{definition}
We now introduce the discrete Mather set.
	\begin{definition}\label{discrete_Mather}
		A probability measure $\mu_\tau$ on $\mathbb T^d\times\mathbb T^d$ is called a
		\emph{discrete Mather measure} if:
		\begin{itemize}
			\item[($i$)] $\mu_\tau$ is \emph{discrete holonomic}, namely
			\[
			\int_{\mathbb T^d\times\mathbb T^d}
			\bigl(\varphi(y)-\varphi(x)\bigr)\,d\mu_\tau(x,y)=0
			\qquad
			\forall\,\varphi\in C(\mathbb T^d);
			\]
			\item[($ii$)] $\mu_\tau$ minimizes the discrete action
			\begin{equation*}\label{discrete_action}
				\mathfrak{A}_\tau(\mu_\tau)=\int_{\mathbb T^d\times\mathbb T^d}
			\mathcal L_\tau(x,y)\,d\mu_\tau(x,y),
			\end{equation*}
		 in the set of discrete holonomic probability measures $\mathcal H_\tau$. 
	\end{itemize}
			The \emph{discrete Mather set} is given by the closure of the union of the support of the
			Mather measures, i.e.,
			\[
			\widetilde{\mathcal M}_L^{\tau}
			:=
			\overline{\bigcup\{\operatorname{spt}(\mu_\tau):\, \mu_\tau \,\text{discrete Mather measure}\}}
			\;\subset\;
			\mathbb T^d\times\mathbb T^d.
			\]
			
	\end{definition}

\begin{remark}\label{lifted_measure}
	In   \Cref{discrete_Mather}, the \emph{discrete holonomy} condition can be equivalently written  for a measure $\tilde\mu_\tau \in \PP(\T^d \times \R^d)$ as
	\begin{equation*}
		\int_{\T^d \times \R^d} (\varphi(x + \tau v)-\varphi(x))\, d\tilde\mu_\tau(x,v)
		=0
		\qquad \forall\, \varphi \in C(\T^d).
	\end{equation*}
	This follows by identifying $\mu_\tau$ as the push-forward of $\tilde\mu_\tau$ through the map 
	$\Theta_\tau:\T^d \times \R^d \to \T^d \times \T^d$ defined by 
	$\Theta_\tau(x,v) = (x, x+\tau v)$, that is,	$\mu_\tau = (\Theta_\tau)_\sharp\tilde \mu_\tau.$
	The set $\widetilde{\mathcal M}_L^{\tau}$ is canonically identified with a subset of $\T^d \times \R^d$ through the
	relation	$\Theta_{\tau}^{-1}: (x,y)\ \mapsto  (x, (y-x)/\tau).$ 
	We refer to \cite[Definition 3.1]{Garibaldi} and \cite[Definition 2]{Iturriaga-Wang} for related formulations.
\end{remark}	

In analogy with the continuous case, the discrete Aubry set can be characterized in terms of bi-infinite calibrated sequences. To establish this result, we first prove a preliminary lemma providing a uniform bound on the admissible velocities of a calibrated configuration. Recall that, by \cite[Proposition 4]{Iturriaga-Wang}, solutions of  \eqref{discrete_laxoleinik_classical} are  Lipschitz continuous, uniformly in $\tau$.
\begin{lemma}\label{lem:bounded_velocitites}
Let $u_\tau : \mathbb{T}^d \to \mathbb{R}$ be a    
solution  of the Lax--Oleinik equation \eqref{discrete_laxoleinik_classical}
and let $\{x_n\}_{n\le 0}$ be a calibrated backward configuration for $u_\tau$.  
Define the discrete velocities
\[
v_n := \frac{x_{n+1}-x_n}{\tau}.
\]
Then there exists a constant $D>0$, depending only on the Tonelli Lagrangian $L$, such that
\[
|v_n| \le D, \qquad \forall n\le 0.
\]
\end{lemma}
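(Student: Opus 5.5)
The plan is to compare the calibrated step with the trivial ``stay put'' step, and to combine the uniform Lipschitz bound on $u_\tau$ with superlinearity of $L$. Fix $n\le -1$. The calibration identity \eqref{discrete_calibrated} at the point $x_{n+1}$ says that $x_n$ realizes the infimum in \eqref{discrete_laxoleinik_classical} at $y=x_{n+1}$:
\[
u_\tau(x_{n+1})+\tau\bar L(\tau)=u_\tau(x_n)+\tau L(x_n,v_n).
\]
The same equation with the competitor $x=x_{n+1}$, whose velocity is $0$, gives
\[
u_\tau(x_{n+1})+\tau\bar L(\tau)\le u_\tau(x_{n+1})+\tau L(x_{n+1},0).
\]
Combining the two relations yields
\[
\tau L(x_n,v_n)\le u_\tau(x_{n+1})-u_\tau(x_n)+\tau L(x_{n+1},0).
\]

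Next I use that, by \cite[Proposition 4]{Iturriaga-Wang}, $u_\tau$ is Lipschitz with a constant $K_0$ independent of $\tau$ (for $\tau$ in a bounded range, say $\tau\in(0,1]$). Working on the lift to $\R^d$, as the infimum over $x\in\R^d$ in \eqref{discrete_laxoleinik_classical} suggests, so that $x_{n+1}-x_n=\tau v_n$, we get
\[
u_\tau(x_{n+1})-u_\tau(x_n)\le K_0\tau|v_n|.
\]
We also have $L(x_{n+1},0)\le C_0:=\max_{\T^d}L(\cdot,0)$. Dividing by $\tau$ gives
\[
L(x_n,v_n)\le K_0|v_n|+C_0 .
\]
Now I apply \eqref{superlinearity} with $K=K_0+1$, which gives $L(x_n,v_n)\ge (K_0+1)|v_n|+C(K_0+1)$. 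Hence
\[
|v_n|\le C_0-C(K_0+1)=:D,
\]
and $D$ is independent of $n$ and $\tau$. The index $n=0$, with $v_0=(x_1-x_0)/\tau$, is not part of a backward configuration unless $x_1$ is defined. In the setting of $\widetilde\Sigma_L^\tau$, where $x_0=x+\tau v$ and $x_{-1}=x$, the velocity in question is $v_{-1}$, which is already covered by the case $n=-1$.

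The main obstacle is the uniformity of the Lipschitz constant of $u_\tau$ in $\tau$, which is exactly what makes $D$ depend only on $L$. I take this from the cited proposition. If one wanted it self-contained, the route would be to derive it from the Lax--Oleinik equation: for nearby $y,y'$, use the minimizer $x$ for $y$ as a competitor for $y'$, and control $L(x,(y'-x)/\tau)-L(x,(y-x)/\tau)$ through a preliminary bound on $(y-x)/\tau$. That is a circular-looking bootstrap, and this is why I rely on \cite{Iturriaga-Wang}. A secondary subtlety is the identification between $\T^d$ and $\R^d$: the velocity must be computed with the lifted points, consistently with \eqref{discrete_laxoleinik_classical}, so that the Lipschitz estimate on the torus (distance at most $|x_{n+1}-x_n|$) applies.
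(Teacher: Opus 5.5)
Your proof is correct and follows essentially the same route as the paper: the calibration identity, the $\tau$-uniform Lipschitz bound on $u_\tau$, and superlinearity with slope $K+1$. The one difference is how you handle $\bar L(\tau)$. You bound it directly, $\bar L(\tau)\le \max_{\T^d}L(\cdot,0)$, by testing the Lax--Oleinik equation with the stay-put competitor $x=x_{n+1}$. The paper instead invokes uniform boundedness of $\bar L(\tau)$ on $(0,1)$, supported only by the convergence $\bar L(\tau)\to-\alpha(H)$ as $\tau\to0$, which by itself does not give a bound on all of $(0,1)$. Since only an upper bound on $\bar L(\tau)$ is actually needed, your version is slightly more self-contained.
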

\proof
For every $n\le -1$, the calibrated condition gives
\[
u_\tau(x_{n+1})
=
u_\tau(x_n)
+
\tau L(x_n, v_n)
-
\tau\,\overline L(\tau),
\]
i.e.,
\begin{equation}\label{eq:calib-L}
L(x_n, v_n)
=
\overline L(\tau)
+
\frac{u_\tau(x_{n+1}) - u_\tau(x_n)}{\tau}.
\end{equation}
Since $u_\tau$ is Lipschitz, there exists $K>0$ such that
\begin{equation}\label{eq:Lipschitz}
|u_\tau(x_{n+1}) - u_\tau(x_n)|
\le 
K \, |x_{n+1} - x_n|
=
K \tau |v_n|.
\end{equation}
Inserting \eqref{eq:Lipschitz} into \eqref{eq:calib-L} yields
\begin{equation}\label{eq:upper-L}
L(x_n,v_n)
\le 
\overline L(\tau) + K |v_n|.
\end{equation}
By superlinearity, for every $M>0$ there exists a constant $C(M)$ such that
\begin{equation}\label{eq:superlinear}
L(x,v) \ge M |v| - C(M), \qquad \forall (x,v) \in \mathbb{T}^d\times\mathbb{R}^d.
\end{equation}
Apply \eqref{eq:superlinear} with a parameter $M>0$ yet to be chosen, and combine with 
\eqref{eq:upper-L}.  
We obtain
\[
M|v_n| - C(M)
\le
L(x_n,v_n)
\le
\overline L(\tau) + K |v_n|,
\]
and therefore
\begin{equation*}
(M-K) |v_n|
\le
\overline L(\tau) + C(M).
\end{equation*}
Choose $M>K$, for instance $M = K+1$, so that the coefficient $M-K$ in the left-hand side is equal to one.  
Thus,
\begin{equation}\label{eq:velocity-ineq}
|v_n| \le \overline L(\tau) + C(K+1).
\end{equation}
Since the constant $\overline L(\tau)$ is uniformly bounded for $\tau\in(0,1)$, 
the right-hand side of \eqref{eq:velocity-ineq} is bounded by a constant depending only 
on $L$.  
Define
\[
D := \sup_{\tau\in(0,1)} |\overline L(\tau)| + C(K+1),
\]
which is finite and depends only on the Lagrangian $L$. Then from \eqref{eq:velocity-ineq} we conclude that
\begin{equation*}
|v_n|\le D,
\qquad \forall n\le 0.  \eqno\square
\end{equation*}

We  prove that a calibrated configuration for $(x,v)\in \widetilde{\mathcal A}^{\,\tau}_L$ can be always extended to a  bi-infinite globally calibrated configuration through $(x,v)$.
\begin{proposition}\label{prop:global_calibrated_discrete_aubry}
For every $(x,v)\in \widetilde{\mathcal A}^{\,\tau}_L$ there exists a bi-infinite sequence $\{x_k\}_{k\in\Z}\subset\T^d$ such that $x_0=x$, $x_1=x+\tau v$
and for every integers $m<n$ one has the discrete calibration identity
\begin{equation}\label{eq:global_discrete_calibration}
u_\tau(x_n)-u_\tau(x_m)=\sum_{j=m}^{n-1}\mathcal L_\tau(x_j,x_{j+1})-(n-m)\tau\bar L(\tau).
\end{equation}
\end{proposition}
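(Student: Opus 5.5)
Unwinding the definitions, $(x,v)\in\widetilde{\mathcal A}^{\,\tau}_L$ means that for every $n\in\N$ we have $(x,v)\in\Psi^n_{L,\tau}(\widetilde\Sigma^\tau_L)$. Concretely, for each $n$ there is a point $(z^n,w^n)\in\widetilde\Sigma^\tau_L$ together with a calibrated configuration $\{y^n_{-k}\}_{k\ge0}$ for $u_\tau(z^n+\tau w^n)$, with $y^n_{-1}=z^n$, and such that
\[
y^n_{-n-1}=x,\qquad y^n_{-n}=x+\tau v.
\]
I will re-index this configuration by setting $x^n_j:=y^n_{j-n-1}$ for $j\le n+1$. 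Then $x^n_0=x$, $x^n_1=x+\tau v$, and the one-step identity \eqref{discrete_calibrated} holds for every consecutive pair $(x^n_j,x^n_{j+1})$ with $j\le n$.

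Summing these one-step identities telescopically gives \eqref{eq:global_discrete_calibration} for all $m<n'\le n+1$. So each $n$ produces a finite-forward, infinite-backward calibrated chain through $x$ and $x+\tau v$, whose forward length grows with $n$.

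The plan is then a compactness/diagonal argument.
\begin{itemize}
\item By \Cref{lem:bounded_velocitites}, applied to the backward calibrated configuration $\{y^n_{-k}\}$, all increments satisfy $|x^n_{j+1}-x^n_j|\le\tau D$, uniformly in $n$ and $j$.
\item Working on $\T^d$, which is compact (or with lifts in $\R^d$ anchored at $x^n_0=x$, which then lie in bounded sets), each coordinate sequence $(x^n_j)_n$ is relatively compact.
\item A Cantor diagonal extraction gives a subsequence $n_\ell\to\infty$ such that $x^{n_\ell}_j\to x_j$ for every $j\in\Z$.
\item The limit satisfies $x_0=x$ and $x_1=x+\tau v$, since these values are fixed along the sequence.
\end{itemize}

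To pass to the limit in the one-step identity
\[
u_\tau(x^{n_\ell}_{j+1})+\tau\bar L(\tau)=u_\tau(x^{n_\ell}_j)+\mathcal L_\tau(x^{n_\ell}_j,x^{n_\ell}_{j+1}),
\]
which holds once $n_\ell\ge j$, I use two facts: $u_\tau$ is continuous (indeed Lipschitz), and $\mathcal L_\tau$ is continuous on the relevant bounded set of increments. Both follow from $L\in C^2$ and the uniform bound on the increments. The identity therefore passes to the limit for every $j\in\Z$, and telescoping yields \eqref{eq:global_discrete_calibration} for all $m<n$.

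The main obstacle is bookkeeping on the torus. $\mathcal L_\tau(x,y)$ depends on the displacement $y-x$, so the increments must be read in $\R^d$ as the actual vectors $\tau v^n_j$, not just as points of $\T^d$. To handle this, I will track the lifted displacements $v^n_j=(x^n_{j+1}-x^n_j)/\tau$, which are bounded by $D$. I extract convergence of these vectors in $\R^d$ simultaneously with the points, and define the limit chain by $x_{j+1}=x_j+\tau v_j$. This keeps the limiting action consistent with the one along the approximating chains.

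A second point to check is that the index shift is legitimate. The definition of $\Psi^n_{L,\tau}$ places $(x,v)$ at position $-n-1$ of a configuration beginning at some $(z^n,w^n)\in\widetilde\Sigma^\tau_L$, so the chain after re-indexing really does extend $n$ steps forward from $x$. Once this is verified, the argument above completes the proof.
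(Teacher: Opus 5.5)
Your proposal is correct and follows essentially the same route as the paper. Both arguments use the re-indexing $x^{(N)}_k=\hat x^{(N)}_{k-(N+1)}$ of the backward calibrated configurations, the uniform velocity bound from \Cref{lem:bounded_velocitites}, a Cantor diagonal extraction on finite blocks, and passage to the limit in the one-step calibration identities followed by telescoping. Your additional tracking of the lifted displacements $v^n_j\in\R^d$ is a worthwhile refinement that the paper leaves implicit, since $\mathcal L_\tau$ depends on the actual displacement and not merely on the two points of $\T^d$.
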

\begin{proof} 
Let $(x,v)\in\widetilde{\mathcal{A}}_{L}^{\tau}$. By definition,
\[
\widetilde{\mathcal{A}}_{L}^{\tau}=\bigcap_{N\in\N}\Psi_{L,\tau}^N\bigl(\widetilde\Sigma_L^\tau\bigr),
\]
hence for every $N\in\N$ there exists $(\hat x^{(N)},\hat v^{(N)})\in\widetilde\Sigma_L^\tau$ such that
\[
\Psi_{L,\tau}^N(\hat x^{(N)},\hat v^{(N)})=(x,v).
\]
Since $(\hat x^{(N)},\hat v^{(N)})\in\widetilde\Sigma_L^\tau$, there exists a backward calibrated configuration
$\{\hat x^{(N)}_k\}_{k\le 0}$ for $u_\tau(\hat x^{(N)}+\tau\hat v^{(N)})$ satisfying
$\hat x^{(N)}_{-1}=\hat x^{(N)}$ and $(\hat x^{(N)}_0-\hat x^{(N)}_{-1})/\tau=\hat v^{(N)}$, and such that for every $k\le -1$,
\begin{equation}\label{eq:calib-hat}
u_\tau\bigl(\hat x^{(N)}_{k+1}\bigr)-u_\tau\bigl(\hat x^{(N)}_{k}\bigr)
=
\mathcal L_\tau\bigl(\hat x^{(N)}_{k},\hat x^{(N)}_{k+1}\bigr)-\tau\bar L(\tau).
\end{equation}
By the definition of $\Psi_{L,\tau}^N$, from the identity $\Psi_{L,\tau}^N(\hat x^{(N)},\hat v^{(N)})=(x,v)$ one has
\[
\hat x^{(N)}_{-N-1}=x,
\qquad
\frac{\hat x^{(N)}_{-N}-\hat x^{(N)}_{-N-1}}{\tau}=v,
\]
and therefore $\hat x^{(N)}_{-N}=x+\tau v$.
We now reindex this calibrated configuration so that the edge corresponding to $(x,v)$ becomes the edge $(x^{(N)}_0,x^{(N)}_1)$.
Define a sequence $\bigl(x^{(N)}_k\bigr)_{k\le N+1}$ by
\[
x^{(N)}_k := \hat x^{(N)}_{k-(N+1)}, \qquad k\le N+1.
\]
Then $x^{(N)}_0=\hat x^{(N)}_{-N-1}=x$ and $x^{(N)}_1=\hat x^{(N)}_{-N}=x+\tau v$.
Moreover, for every $k\le N$ we have $k-(N+1)\le -1$, hence \eqref{eq:calib-hat} yields
\begin{equation}\label{eq:calib-reindexed}
u_\tau\bigl(x^{(N)}_{k+1}\bigr)-u_\tau\bigl(x^{(N)}_{k}\bigr)
=
\mathcal L_\tau\bigl(x^{(N)}_{k},x^{(N)}_{k+1}\bigr)-\tau\bar L(\tau),
\qquad \forall\,k\le N.
\end{equation}
In particular, summing \eqref{eq:calib-reindexed} from $k=m$ to $k=n-1$ gives, for all integers $m<n\le N+1$,
\begin{equation*}\label{eq:finite-N-calib}
u_\tau\bigl(x^{(N)}_{n}\bigr)-u_\tau\bigl(x^{(N)}_{m}\bigr)
=
\sum_{j=m}^{n-1}\mathcal L_\tau\bigl(x^{(N)}_{j},x^{(N)}_{j+1}\bigr)-(n-m)\tau\bar L(\tau).
\end{equation*}
We next extract a single bi-infinite calibrated configuration from the family $\{x^{(N)}\}_{N\in\N}$.
By \Cref{lem:bounded_velocitites}, the discrete velocities associated with any calibrated configuration are uniformly bounded, i.e., 
there exists $D>0$ (depending only on $L$) such that
\[
\left
|\frac{x^{(N)}_{k+1}-x^{(N)}_{k}}{\tau}\right|\le D
\qquad \text{for all } N\in\N \text{ and all } k\le N.
\]
Fix $M\in\N$. For every $N\ge M$, consider the finite block
\[
\mathbf x^{(N)}_{[-M,M]} := \bigl(x^{(N)}_{-M},x^{(N)}_{-M+1},\dots,x^{(N)}_{M}\bigr)\in(\T^d)^{2M+1}.
\]
Since $(\T^d)^{2M+1}$ is compact, the sequence $\{\mathbf x^{(N)}_{[-M,M]}\}_{N\ge M}$ has a convergent subsequence.
Using a diagonal extraction over $M\in \N$, we find a subsequence $N_\ell\to\infty$ and points $(x_k)_{k\in\Z}\subset\T^d$
such that, for every fixed $M$,
\[
\bigl(x^{(N_\ell)}_{-M},\dots,x^{(N_\ell)}_{M}\bigr)\longrightarrow (x_{-M},\dots,x_{M})
\quad \text{in }(\T^d)^{2M+1} \text{ as } \ell\to\infty.
\]
In particular, passing to the limit in $x^{(N_\ell)}_0=x$ and $x^{(N_\ell)}_1=x+\tau v$ yields $x_0=x$ and $x_1=x+\tau v$.
Finally, we pass to the limit in the calibration identities.
Fix $k\in\Z$ and choose $M\ge |k|+1$.
For $\ell$ large we have $N_\ell\ge M$, so \eqref{eq:calib-reindexed} holds at index $k$ for $x^{(N_\ell)}$.
By the convergence $x^{(N_\ell)}_j\to x_j$ for $j=k,k+1$ and the continuity of $u_\tau$ and $L_\tau$,
we can let $\ell\to\infty$ in \eqref{eq:calib-reindexed} to obtain
\[
u_\tau(x_{k+1})-u_\tau(x_{k})
=
\mathcal L_\tau(x_{k},x_{k+1})-\tau\bar L(\tau),
\qquad \forall\,k\in\Z.
\]
Summing this identity from $k=m$ to $k=n-1$ gives, for all integers $m<n$,
\[
u_\tau(x_n)-u_\tau(x_m)=\sum_{j=m}^{n-1}\mathcal L_\tau(x_j,x_{j+1})-(n-m)\tau\bar L(\tau),
\]
which is exactly \eqref{eq:global_discrete_calibration}. \end{proof} 
We show that, as in the classical continuous case, the discrete Mather set is a subset of the discrete Aubry set. 
\begin{proposition}\label{prop:Mather_in_Aubryscrete}
	For every $\tau\in(0,1)$, the sets $\widetilde{\mathcal A}^{\,\tau}_L$ and  $\widetilde{\mathcal M}^{\,\tau}_L$ are non empty and compact and
	\begin{equation*}\label{Mather-Aubry}
		\widetilde{\mathcal M}^{\,\tau}_L \subset \widetilde{\mathcal A}^{\,\tau}_L .
	\end{equation*}
\end{proposition}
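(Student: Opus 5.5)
I would split the proof into three parts: nonemptiness and compactness of $\widetilde{\mathcal A}^{\,\tau}_L$; nonemptiness and compactness of $\widetilde{\mathcal M}^{\,\tau}_L$; and the inclusion. Throughout, $\widetilde{\mathcal M}^{\,\tau}_L$ is viewed in $\T^d\times\R^d$ via $\Theta_\tau^{-1}$, as in \Cref{lifted_measure}, and I fix $\tau\in(0,1)$.

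\emph{Aubry set.} By \Cref{lem:bounded_velocitites}, every element of $\widetilde\Sigma^\tau_L$, and every element of its images $\Psi^n_{L,\tau}(\widetilde\Sigma^\tau_L)$, lies in $\T^d\times \overline{B_D}$. I would first show that each $\Psi^n_{L,\tau}(\widetilde\Sigma^\tau_L)$ is closed.
\begin{itemize}
\item Take $(x^j,v^j)$ in this set converging to some limit.
\item Each is realized by a backward calibrated configuration $\{x^j_k\}_{k\le0}$ with bounded velocities.
\item A diagonal extraction, as in the proof of \Cref{prop:global_calibrated_discrete_aubry}, gives a limit configuration. By continuity of $u_\tau$ and $\mathcal L_\tau$ it is again calibrated, and it realizes the limit point.
\end{itemize}
The sets are also nested decreasing, since shifting a calibrated configuration gives a calibrated configuration. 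So $\Psi^{n+1}_{L,\tau}(\widetilde\Sigma^\tau_L)\subset\Psi^{n}_{L,\tau}(\widetilde\Sigma^\tau_L)$.

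Each set is nonempty because $\widetilde\Sigma^\tau_L\neq\emptyset$: calibrated configurations exist for every $x$ by \eqref{discrete_calibrated}. The intersection of a decreasing sequence of nonempty compact sets is then nonempty and compact.

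\emph{Mather set.} I would argue as follows.
\begin{itemize}
\item The set $\mathcal H_\tau$ of discrete holonomic probabilities on the compact space $\T^d\times\T^d$ is nonempty (take $\delta_{(x,x)}$), convex and weakly-$*$ closed.
\item $\mathcal L_\tau$ is continuous and bounded on $\T^d\times\T^d$. Here I take $y-x$ as the representative of minimal norm, or otherwise handle lifts so that velocities stay bounded.
\item Hence the action attains its minimum, and discrete Mather measures exist.
\item The set of minimizers is compact and convex. So a countable dense family $\{\mu_i\}$ of minimizers yields a single minimizer $\sum 2^{-i}\mu_i$ whose support is the closure of the union of the supports. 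This is not needed for compactness, which is immediate since the set is a closure in the compact $\T^d\times\T^d$, but it is convenient for the inclusion.
\end{itemize}

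\emph{Inclusion.} This is the main obstacle. The plan is to show that every $(x,y)$ in the support of a discrete Mather measure $\mu_\tau$ satisfies the calibration equality
\[
u_\tau(y)-u_\tau(x)=\mathcal L_\tau(x,y)-\tau\bar L(\tau).
\]
\begin{enumerate}
\item First, $\min_{\mathcal H_\tau}\mathfrak A_\tau=-\tau\bar L(\tau)$. The inequality $\ge$ holds because $u_\tau(y)-u_\tau(x)\le \mathcal L_\tau(x,y)-\tau\bar L(\tau)$ from \eqref{discrete_laxoleinik_classical}; integrating against a holonomic measure kills the $u_\tau$ terms. The reverse inequality would come from building a holonomic measure as a limit of empirical measures $\frac1n\sum_{k}\delta_{(x_{-k-1},x_{-k})}$ along a calibrated configuration. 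Holonomy of the limit follows from a telescoping sum, and the action equals $-\tau\bar L(\tau)$ plus $\frac1n(u_\tau(x_0)-u_\tau(x_{-n}))\to0$.
\item Consequently the nonnegative continuous function $\mathcal L_\tau(x,y)-\tau\bar L(\tau)-u_\tau(y)+u_\tau(x)$ has zero integral against $\mu_\tau$, so it vanishes on $\operatorname{spt}\mu_\tau$.
\item Next I need bi-infinite chains inside the support. By holonomy, the two marginals of $\mu_\tau$ coincide, say with $\nu$. By disintegration, for each $(x,y)\in\operatorname{spt}\mu_\tau$ I expect there is $z$ with $(z,x)\in\operatorname{spt}\mu_\tau$. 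Indeed $x\in\operatorname{spt}\nu$, which is the projection of the support onto the second coordinate, by compactness.
\item Iterating backward gives a backward chain in the support, which is calibrated by step 2. This gives $\{x_k\}_{k\le 1}$ with $x_0=x$ and $x_1=y$.
\item Forward extension works in the same way using the first marginal. For membership in $\Psi^n_{L,\tau}(\widetilde\Sigma^\tau_L)$ I need, for each $n$, a calibrated configuration ending $n$ steps after the edge $(x,y)$. The forward chain $x_1,\dots,x_{n+1}$ together with the backward chain provides exactly this.
\end{enumerate}
Thus $\Theta_\tau^{-1}(x,y)\in\widetilde{\mathcal A}^{\,\tau}_L$ for every $n$. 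Since $\widetilde{\mathcal A}^{\,\tau}_L$ is closed, the inclusion passes to the closure of the union of the supports. The delicate points are the marginal/support argument in step 3 and the construction of a minimizer in step 1.
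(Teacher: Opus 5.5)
Your proof of the inclusion follows the paper's own argument. The Lax--Oleinik inequality integrated against holonomic measures, together with the Ces\`aro competitor along a backward calibrated configuration, identifies the minimal discrete action; the nonnegative defect then vanishes on $\operatorname{spt}\mu_\tau$, and equality of the marginals lets you continue each edge backward and forward inside the support. Your extra steps are also correct and supply details the paper's proof omits or leaves implicit: the nested compact sets $\Psi^n_{L,\tau}(\widetilde\Sigma^\tau_L)$ for nonemptiness and compactness of the Aubry set, existence of minimizers on the compact space for the Mather set, the identity $\pi_i(\operatorname{spt}\mu_\tau)=\operatorname{spt}(\pi_{i\#}\mu_\tau)$ to extend from \emph{every} support point, the direct check of membership in each $\Psi^n_{L,\tau}(\widetilde\Sigma^\tau_L)$, and closedness of $\widetilde{\mathcal A}^{\,\tau}_L$ to pass to the closure. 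The only slip is a sign in step 1: with the convention of \eqref{discrete_laxoleinik_classical} the minimal action is $\tau\bar L(\tau)$, not $-\tau\bar L(\tau)$, as your own integrated inequality and the defect function in step 2 already show.
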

\begin{proof}
	Let $\mu_\tau$ be a discrete Mather measure on $\T^d\times\T^d$. By the discrete Lax--Oleinik equation \eqref{discrete_laxoleinik_classical}, for all $x,y\in\T^d$ we have the inequality
	\begin{equation}\label{eq:LO-ineq}
		u_\tau(y)+\tau\bar L(\tau)\le u_\tau(x)+\mathcal L_\tau(x,y).
	\end{equation}
	Integrating \eqref{eq:LO-ineq} with respect to $\mu_\tau$ and using the discrete holonomy constraint
	\[
	\int_{\T^d\times\T^d}\bigl(\varphi(y)-\varphi(x)\bigr)\,d\mu_\tau(x,y)=0
	\quad \forall \varphi\in C(\T^d),
	\]
	with $\varphi=u_\tau$, we obtain
	\begin{equation}\label{eq:lower-bound-action}
		\tau\bar L(\tau)\le \int_{\T^d\times\T^d}\mathcal L_\tau(x,y)\,d\mu_\tau(x,y).
	\end{equation} 
	On the other hand, for any $x_0\in\T^d$ one can select a backward calibrated configuration $(x_{-k})_{k\ge 0}$ for $u_\tau$ such that
	\begin{equation}\label{eq:backward-calib}
		u_\tau(x_{-k})+\tau\bar L(\tau)=u_\tau(x_{-k-1})+\mathcal L_\tau(x_{-k-1},x_{-k}), \qquad k\ge 0.
	\end{equation}
	Averaging the edges $(x_{-k-1},x_{-k})$ as in the standard Ces\`aro construction produces a discrete holonomic probability measure $\eta_\tau$ satisfying
	\begin{equation*}\label{eq:competitor-equality}
		\int_{\T^d\times\T^d}\mathcal L_\tau(x,y)\,d\eta_\tau(x,y)=\tau\bar L(\tau).
	\end{equation*}
	Indeed, For each $N\ge 1$, define the empirical (Ces\`aro) measures on $\T^d\times\T^d$ by
	\begin{equation}\label{eq:eta-tau-N}
		\eta_{\tau,N}:=\frac1N\sum_{k=0}^{N-1}\delta_{(x_{-(k+1)},\,x_{-k})}\ \in\ \mathcal P(\T^d\times\T^d).
	\end{equation}
	Since $\T^d\times\T^d$ is compact, the family $\{\eta_{\tau,N}\}_{N\ge 1}$ is tight, hence there exist $N_j\to\infty$ and $\eta_\tau\in\mathcal P(\T^d\times\T^d)$ such that
	\begin{equation}\label{eq:weak-limit}
		\eta_{\tau,N_j}\rightharpoonup \eta_\tau \quad \text{weakly in }\mathcal P(\T^d\times\T^d).
	\end{equation}
	For any $\varphi\in C(\T^d)$ one has the telescoping identity
	\[
	\int_{\T^d\times\T^d}\bigl(\varphi(y)-\varphi(x)\bigr)\,d\eta_{\tau,N}(x,y)
	=\frac1N\sum_{k=0}^{N-1}\bigl(\varphi(x_{-k})-\varphi(x_{-(k+1)})\bigr)
	=\frac{\varphi(x_0)-\varphi(x_{-N})}{N},
	\]
	hence letting $N=N_j\to\infty$ and using $\|\varphi\|_\infty<\infty$ gives
	\[
	\int_{\T^d\times\T^d}\bigl(\varphi(y)-\varphi(x)\bigr)\,d\eta_\tau(x,y)=0,
	\]
	so $\eta_\tau$ is discrete holonomic. Moreover, integrating $\mathcal{L}_\tau$ against \eqref{eq:eta-tau-N} and using \eqref{eq:backward-calib} yields
	\begin{align*}
		\int_{\T^d\times\T^d}\mathcal L_\tau(x,y)\,d\eta_{\tau,N}(x,y)
		&=\frac1N\sum_{k=0}^{N-1}\mathcal L_\tau(x_{-(k+1)},x_{-k})\\
		&=\frac1N\sum_{k=0}^{N-1}\Bigl(u_\tau(x_{-k})-u_\tau(x_{-(k+1)})+\tau\bar L(\tau)\Bigr)\\
		&=\tau\bar L(\tau)+\frac{u_\tau(x_0)-u_\tau(x_{-N})}{N}.
	\end{align*}
	Since $u_\tau$ is continuous and periodic on $\T^d$, it is bounded; therefore the last term converges to $0$ as $N\to\infty$ and, by \eqref{eq:weak-limit} and continuity of $L_\tau$, passing to the limit along $N_j$ gives
	\begin{equation}\label{eq:eta-action}
		\int_{\T^d\times\T^d}\mathcal L_\tau(x,y)\,d\eta_\tau(x,y)=\tau\bar L(\tau).
	\end{equation}
	Since $\mu_\tau$ minimizes the  Lagrangian action among discrete holonomic measures, \eqref{eq:eta-action}and  \eqref{eq:lower-bound-action} yields
	\begin{equation}\label{minimizer}
		\int_{\T^d\times\T^d}\mathcal L_\tau\,d\mu_\tau=\tau\bar L(\tau).
	\end{equation}
	Set $G_\tau(x,y):=u_\tau(x)+\mathcal L_\tau(x,y)-u_\tau(y)-\tau\bar L(\tau)$. By \eqref{eq:LO-ineq}, $G$ is non negative. By the discrete holonomy condition and \eqref{minimizer}  
	\[\int_{\T^d\times\T^d}G_\tau\,d\mu_\tau=0.\]
	Hence we conclude that $G_\tau(x,y)=0$ for $\mu_\tau$--a.e.\ $(x,y)$, namely $\mu_\tau$ is supported on the set
	\[
	\mathcal K_\tau:=\Bigl\{(x,y)\in\T^d\times\T^d:\ u_\tau(y)+\tau\bar L(\tau)=u_\tau(x)+\mathcal L_\tau(x,y)\Bigr\}.
	\]
	Hence, for every $(x,y)\in \operatorname{spt}(\mu_\tau)$,
	\begin{equation}\label{calibration-pair}
		u_\tau(y) - u_\tau(x)
		=
		\mathcal L_\tau(x,y) - \tau \bar L(\tau).
	\end{equation}
	Let $(x_0,x_1)\in \operatorname{spt}(\mu_\tau)$.	We now show that one can construct a bi-infinite calibrated sequence $(x_k)_{k\in\Z}$ such that
	\[
	(x_k,x_{k+1}) \in \operatorname{spt}(\mu_\tau)
	\quad \forall k\in\Z.
	\]
	To this end, we use the discrete holonomy condition
	\[
	\int \varphi(y)\,d\mu_\tau(x,y)
	=
	\int \varphi(x)\,d\mu_\tau(x,y)
	\quad \forall \varphi\in C(\T^d),
	\]
	which implies that the first and second marginals of $\mu_\tau$ coincide. By disintegration, we can write
	\[
	\mu_\tau(dx\,dy) = \mu_0(dx)\, \eta_x(dy),
	\]
	where $\mu_0$ is the common marginal and $\eta_x$ is a probability kernel, i.e.,  for $\mu_0$-a.e. $x$ there exists $y$ such that $(x,y)\in \operatorname{spt}(\mu_\tau)$, and conversely, for every $y$ there exists $x$ such that $(x,y)\in \operatorname{spt}(\mu_\tau)$.
	
	Starting from $(x_0,x_1)\in \operatorname{spt}(\mu_\tau)$, we can thus inductively construct a forward sequence $(x_k)_{k\ge 0}$ such that $(x_k,x_{k+1})\in \operatorname{spt}(\mu_\tau)$, and similarly a backward sequence $(x_k)_{k\le 0}$, obtaining a bi-infinite trajectory. Applying \eqref{calibration-pair} to each pair $(x_k,x_{k+1})$ and summing from $k=m$ to $k=n-1$, we obtain
	\[
	u_\tau(x_n) - u_\tau(x_m)
	=
	\sum_{k=m}^{n-1} \mathcal L_\tau(x_k,x_{k+1})
	-
	(n-m)\tau \bar L(\tau),
	\quad \forall m<n,
	\]
	which shows that the trajectory $(x_k)_k$ is globally calibrated. Hence, for  $x_1 = x_0 + \tau v_0$, 
	$(x_0,v_0)\in \widetilde{\mathcal A}_L^\tau$, and the inclusion follows. \end{proof}

	\section{Convergence of the Mather set}\label{sec:conv_mather}
The aim of this section is to investigate the Kuratowski convergence of the discrete Mather sets. 
We first analyze the $\limsup$, and then, under a genericity assumption on $L$, we prove the corresponding $\liminf$ convergence.
\subsection{The Kuratowski limsup of $\widetilde{\mathcal{M}}_{L}^{\tau}$}
	\begin{proposition} \label{prop:mather_limsup}
	The following limit holds in the Kuratowski sense:
		\[
	\limsup_{\tau\to 0}\,\widetilde{\mathcal M}^{\,\tau}_L \subset \widetilde{\mathcal M}_L.
		\]
	\end{proposition}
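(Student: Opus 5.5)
We argue by the sequential characterization of Kuratowski convergence (the Lemma after \Cref{Kuratowski}), working in $\T^d\times\R^d$ via the identification of \Cref{lifted_measure}. Let $\tau_j\to0$ and $(x_j,v_j)\in\widetilde{\mathcal M}^{\,\tau_j}_L$ with $(x_j,v_j)\to(x,v)$; we must show $(x,v)\in\widetilde{\mathcal M}_L$. Since $\widetilde{\mathcal M}^{\,\tau_j}_L$ is the closure of a union of supports, up to an error $1/j$ we may assume $(x_j,v_j)\in\operatorname{spt}(\tilde\mu_{\tau_j})$ for some discrete Mather measure $\tilde\mu_{\tau_j}$ (lifted to $\T^d\times\R^d$). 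By \Cref{prop:Mather_in_Aubryscrete}, $\operatorname{spt}\tilde\mu_{\tau_j}$ lies in $\widetilde{\mathcal A}^{\tau_j}_L$, so \Cref{lem:bounded_velocitites} gives $|v|\le D$ on all these supports. Hence $\{\tilde\mu_{\tau_j}\}$ is supported in the fixed compact $\T^d\times\overline{B_D}$, and a subsequence converges weakly to some $\tilde\mu\in\PP(\T^d\times\R^d)$.

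Next we show $\tilde\mu$ is a Mather measure. For closedness, take $\varphi\in C^1(\T^d)$; by the discrete holonomy of \Cref{lifted_measure},
\[
0=\int\frac{\varphi(x+\tau_j v)-\varphi(x)}{\tau_j}\,d\tilde\mu_{\tau_j}
=\int v\cdot D\varphi(x)\,d\tilde\mu_{\tau_j}+o(1),
\]
uniformly on $|v|\le D$ by uniform continuity of $D\varphi$. Passing to the limit gives $\int v\cdot D\varphi\,d\tilde\mu=0$. For minimality, \eqref{minimizer} gives $\int L\,d\tilde\mu_{\tau_j}=\bar L(\tau_j)\to-\alpha(H)$, and $L$ is continuous and bounded on the common compact support, so $\int L\,d\tilde\mu=-\alpha(H)$. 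The classical fact that $-\alpha(H)=\min\{\int L\,d\nu:\nu\text{ closed}\}$ then makes $\tilde\mu$ a Mather measure.

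Finally, weak convergence plus $(x_j,v_j)\in\operatorname{spt}\tilde\mu_{\tau_j}$ does \emph{not} by itself give $(x,v)\in\operatorname{spt}\tilde\mu$: mass near $(x_j,v_j)$ could vanish in the limit. This is the main obstacle. Our first attempt is to exploit invariance. By the proof of \Cref{prop:Mather_in_Aubryscrete}, through $(x_j,v_j)$ passes a bi-infinite calibrated configuration contained in $\operatorname{spt}\tilde\mu_{\tau_j}$. Its piecewise linear interpolations are uniformly Lipschitz, and with velocities bounded they converge, by Arzel\`a--Ascoli and a discrete-to-continuous $\Gamma$-convergence of the action (using convergence of the uniformly Lipschitz $u_{\tau_j}$ to a critical solution $u$), to a curve through $x$ calibrated on $\R$ with velocity $v$ at time $0$. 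This puts $(x,v)\in\widetilde{\mathcal A}_L$, but not yet in $\widetilde{\mathcal M}_L$. To reach the Mather set, we localize instead of using the whole measure. Restrict $\tilde\mu_{\tau_j}$ to the ball $B_r(x_j,v_j)$, where it has positive mass $m_j$. We then consider the ergodic decomposition of $\tilde\mu_{\tau_j}$ under the discrete dynamics on its support. Choosing an ergodic discrete Mather measure whose support contains a point near $(x_j,v_j)$, Birkhoff averages along its typical orbit visit $B_r$ with frequency bounded below by a quantity controlled through the recurrence of the limiting calibrated orbit. The resulting limit measure is still Mather and charges $\overline{B_{2r}(x,v)}$; letting $r\to0$ and using closedness of $\widetilde{\mathcal M}_L$ concludes. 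If the uniform lower bound on frequencies fails, the fallback is the weaker statement that the limit lies in the support of some limit of normalized restrictions, combined with Ma\~n\'e's characterization of Mather measures as limits of invariant measures along calibrated curves.
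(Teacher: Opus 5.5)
Your first half is correct, and somewhat cleaner than the paper's. Discrete Mather measures live on calibrated pairs, so the velocity bound of \Cref{lem:bounded_velocitites} gives them a common compact support. Closedness of the limit and $\int L\,d\tilde\mu=-\alpha(H)$ then follow by uniform convergence; the paper instead uses a first-moment bound and lower semicontinuity. The genuine gap is the last step, and you located it exactly: $\tilde\mu_{\tau_j}\rightharpoonup\tilde\mu$ together with $(x_j,v_j)\in\operatorname{spt}\tilde\mu_{\tau_j}$ does not force $(x,v)\in\operatorname{spt}\tilde\mu$. The paper's proof closes this step by ``recalling'' precisely the implication you rejected, which is false: $(1-\frac1n)\delta_0+\frac1n\delta_1\rightharpoonup\delta_0$. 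Your repair does not work either. Nothing gives a lower bound, uniform in $\tau$, on the frequency with which a discrete Mather orbit visits $B_r(x_j,v_j)$, and the ``fallback'' is not an argument. What your invariance argument essentially proves is $\limsup_{\tau\to0}\widetilde{\mathcal M}^{\,\tau}_L\subset\widetilde{\mathcal A}_L$. That gives the statement only under an extra hypothesis such as $\widetilde{\mathcal A}_L=\widetilde{\mathcal M}_L$.

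In fact the inclusion fails in general, so no repair exists. Let $d=1$, $W(x)=1-\cos 2\pi x$, $c_0=\int_0^1\sqrt{2W}$ and $L(x,v)=\frac12v^2+W(x)-c_0v$; this is the pendulum at the edge of the flat part of $\alpha$.

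First, the continuous Mather set is a single point. For closed $\mu$, test closedness with the $C^1$ periodic $u$ satisfying $u'=\sqrt{2W}-c_0$. This gives $\int L\,d\mu=\frac12\int(v-\sqrt{2W(x)})^2\,d\mu\ge0$. A closed measure carried by the graph $v=\sqrt{2W(x)}$ has $x$-marginal $m$ with $\sqrt{2W}\,m=k\,dx$. Since $\int_0^1dx/\sqrt{2W}=\infty$, this forces $k=0$. Hence $\alpha(H)=0$ and $\widetilde{\mathcal M}_L=\{(0,0)\}$.

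Second, for every $\tau$ the minimal discrete action is strictly negative. Take a curve with $\gamma(t+q\tau)=\gamma(t)+1$ and action (without the $-c_0v$ term) at most $c_0+\epsilon$. Sample it at the times $k\tau+s$ and average over $s\in[0,\tau)$. The $W$-part is reproduced exactly. Jensen's inequality lowers the kinetic part by a deficit bounded below independently of $\epsilon$ and $q$; it comes from the heteroclinic passage, where $\ddot\gamma\neq0$. This yields a configuration with $x_{k+q}=x_k+1$ and $\sum_{k=0}^{q-1}\tau(\frac12v_k^2+W(x_k))<c_0$, i.e. a discrete holonomic measure of negative action. Hence every discrete Mather measure has $\int v\,d\tilde\mu_\tau>0$, since otherwise $\int L\,d\tilde\mu_\tau\ge0$.

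Third, these supports spread over the whole circle. Suppose $\pi(\operatorname{spt}\tilde\mu_\tau)$ missed an arc $I$ of length $>D\tau$. Both marginals of $\mu_\tau$ coincide, so a.e. step $x\mapsto x+\tau v$ of the support lands in $\T^1\setminus I$ and cannot cross $I$. Let $\varphi\in C(\T^1)$ equal a lift of the coordinate on $\T^1\setminus I$. Then $0=\int(\varphi(x+\tau v)-\varphi(x))\,d\tilde\mu_\tau=\tau\int v\,d\tilde\mu_\tau>0$, a contradiction. So the supports are $D\tau$-dense in $\T^1$.

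Consequently $\limsup_{\tau\to0}\widetilde{\mathcal M}^{\,\tau}_L$ contains points above every $x\in\T^1$, lying on the graph $v=\sqrt{2W(x)}$, which here is $\widetilde{\mathcal A}_L$. Meanwhile $\tilde\mu_\tau\rightharpoonup\delta_{(0,0)}$. This is exactly the loss of mass you anticipated, and the visit frequencies your repair needs tend to $0$. Since this $L$ has a single ergodic Mather measure, the example also contradicts the conclusion of \Cref{KMather}.
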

	\begin{proof}
	Let $\tau_i\to 0$ and let $(x_i,v_i)\in \widetilde{\mathcal M}^{\,\tau_i}_L$ be such that $(x_i,v_i)\to (x,v)$ in $\T^d\times\R^d$.
By definition of the discrete Mather set, for each $i$ there exists a discrete Mather measure $\mu_{\tau_i}$ on $\T^d\times\T^d$ whose support contains the point $(x_i,y_i)$ with
\[
v_i=\frac{y_i-x_i}{\tau_i},\qquad (x_i,y_i)\in\supp(\mu_{\tau_i}).
\]
Introduce the lifted measures on $ \T^d\times\R^d$,
\[
\widetilde\mu_{\tau}:=\Theta_{\tau\#}^{-1}\mu_{\tau},\qquad \Theta_{\tau}^{-1}(x,y)=\Bigl(x,\frac{y-x}{\tau}\Bigr),
\]
so that 
\[
\int_{\T^d\times\T^d}\mathcal{L}_\tau(x,y)\,d\mu_\tau(dxdy)=\tau\int_{\T^d \times \R^d}L(x,v)\,d\widetilde\mu_\tau(dxdv).
\]

We first claim that the family $\{\widetilde\mu_{\tau}\}_{\tau\in(0,1)}$ is tight in $\T^d \times \R^d$.
Indeed, consider the probability measure $\nu$ on $\T^d\times\T^d$ concentrated on the diagonal $\{(x,x)\}$ (with uniform marginal in $x$); it is discrete holonomic because $\varphi(y)-\varphi(x)=0$ $\nu$--a.e.\ for every $\varphi\in C(\T^d)$, hence it is admissible in the minimization problem of \Cref{discrete_Mather}. 
By minimality of $\mu_\tau$ we get
\[
\int_{\T^d\times\T^d} \mathcal{L}_\tau(x,y)\,d\mu_\tau(dxdy) \le \int_{\T^d\times\T^d} \mathcal{L}_\tau(x,y)\,d\nu(dxdy)
= \tau\int_{\T^d}L(x,0)\,dx =: \tau C_0,
\]
and therefore
\begin{equation}\label{eq:uniform-action}
\int_{\T^d \times \R^d} L(x,v)\,d\widetilde\mu_\tau(x,v) \le C_0,\qquad \forall\,\tau\in(0,1).
\end{equation}
Combining \eqref{superlinearity} with \eqref{eq:uniform-action} yields a uniform first-moment bound as follows
\[
\sup_{\tau\in(0,1)}\int_{\T^d \times \R^d}|v|\,d\widetilde\mu_\tau \le \frac{C_0+C(K)}{K}.
\]
This implies tightness in the velocity variable, while $\T^d$ is compact in the base variable; thus, by Prokhorov's theorem, up to a subsequence we have
\begin{equation}\label{eq:weakconv}
\widetilde\mu_{\tau_i}\rightharpoonup \mu \quad\text{weakly in }\mathcal \PP(\T^d \times \R^d)
\end{equation}
for some probability measure $\mu$ on $\T^d \times \R^d$. Hereafter, we identify $\tau$ with $\tau_i$.

We now show that $\mu$ is closed in the sense of \Cref{closed_measure}. 
The discrete holonomy constraint for $\mu_\tau$ reads
\[
\int_{\T^d\times\T^d}\bigl(\varphi(y)-\varphi(x)\bigr)\,d\mu_\tau(x,y)=0
\qquad\forall\,\varphi\in C(\T^d),
\]
and, in the lifted variables, equivalently
\begin{equation}\label{last}
\int_{\T^d \times \R^d}\bigl(\varphi(x+\tau v)-\varphi(x)\bigr)\,d\widetilde\mu_\tau(x,v)=0
\qquad\forall\,\varphi\in C (\T^d). 
\end{equation}
Given $\varphi\in C^1(\T^d)$, we have
\[
\frac{\varphi(x+\tau v)-\varphi(x)}{\tau}
=\int_0^1 D\varphi(x+\theta\tau v)\cdot v\,d\theta,
\]
and dividing \eqref{last} by $\tau$ we get
\[
\int_{\T^d \times \R^d}\int_0^1 D\varphi(x+\theta\tau v)\cdot v\,d\theta \; d\widetilde\mu_\tau(x,v)=0.
\]
As $\tau\to 0$, the integrand converges pointwise to $D\varphi(x)\cdot v$, and it is dominated by $\|D\varphi\|_\infty |v|$, which is uniformly integrable with respect to $\widetilde\mu_{\tau_i}$ thanks to the uniform first-moment bound above.
Passing to the limit along \eqref{eq:weakconv} yields
\[
\int_{\T^d \times \R^d} D\varphi(x)\cdot v \, d\mu(x,v)=0\qquad \forall\,\varphi\in C^1(\T^d),
\]
so $\mu$ is closed.

Next we prove that $\mu$ minimizes the continuous action among closed measures, i.e.,\ that $\mu$ is a Mather measure in the sense of \Cref{closed_measure}.
By the discrete Lax--Oleinik equation defined in \eqref{discrete_laxoleinik_classical} we have
\[
u_\tau(y)+\tau\bar L(\tau)\le u_\tau(x)+\mathcal{L}_\tau(x,y)\qquad \forall\,x,y\in\T^d,
\]
and integrating w.r.t. any discrete holonomic measure $\eta$ yields
\[
\tau\bar L(\tau)\le \int_{\T^d \times \T^d} \mathcal{L}_\tau(x, y)\,d\eta(dxdy)
\]
because the holonomy constraint cancels the term 
\[
\int_{\T^d \times \T^d}(u_\tau(y)-u_\tau(x))\,d\eta(dxdy) = 0.
\] 
In particular,
\[
\tau\bar L(\tau)\le \inf_{\eta\in\mathcal H_\tau}\int_{\T^d \times \T^d} \mathcal{L}_\tau(x, y)\,d\eta(dxdy) = \int_{\T^d \times \T^d} \mathcal{L}_\tau(x, y)\,d\mu_\tau(dxdy),
\]
where $\mathcal H_\tau$ denotes the set of discrete holonomic probability measures and the last equality follows from the minimality of $\mu_\tau$. 
On the other hand, by the existence of calibrated configurations for $u_\tau$ one can build discrete holonomic measures supported on calibrated edges along which equality holds in the Lax--Oleinik relation; averaging along such a calibrated backward configuration gives a competitor $\eta_\tau\in\mathcal H_\tau$ with $\int L_\tau\,d\eta_\tau=\tau\bar L(\tau)$.  Indeed, arguing as in the proof of \Cref{prop:Mather_in_Aubryscrete} by the definition of calibrated configuration, for any $x_0\in\T^d$ one can choose a backward calibrated sequence $(x_{-k})_{k\ge 0}$ and define the empirical measures on $\T^d\times\T^d$ by
\[
\eta_{\tau}^{N}:=\frac1N\sum_{k=0}^{N-1}\delta_{(x_{-k-1},x_{-k})},\qquad N\in\N.
\]
Hence, by compactness, letting $N\to\infty$ along a subsequence for which $\eta_\tau^N\rightharpoonup \eta_\tau$ gives
\begin{equation*}\label{eq:competitor-equality}
\int_{\T^d\times\T^d}\mathcal L_\tau(x,y)\,d\eta_{\tau}(x,y)=\tau\bar L(\tau).
\end{equation*}
Therefore, 
\[
\inf_{\eta\in\mathcal H_\tau}\int\mathcal L_\tau\,d\eta=\tau\bar L(\tau),
\]
and since $\mu_\tau$ is a minimizer in $\mathcal H_\tau$, we obtain
\begin{equation}\label{last_1}
\int_{\T^d \times \T^d} \mathcal{L}_\tau(x, y)\,d\mu_\tau(dxdy)=\tau\bar L(\tau).
\end{equation} 
Finally, by the definition of the lifted measure $\widetilde\mu_\tau=\Theta^{-1}_{\tau\#}\mu_\tau$, where
$\Theta_{\tau}$ as in \Cref{lifted_measure}, we can rewrite \eqref{last_1} as
\[
\int_{\T^d \times \R^d} L(x,v)\,d\widetilde\mu_\tau(x,v)=\bar L(\tau),
\]
which is exactly the claimed identity.

By \eqref{eq:weakconv} and the convex superlinear nature of $L$, the map $\nu\mapsto \int L\,d\nu$ is lower semicontinuous for weak convergence under the uniform moment bound obtained above; thus
\[
\int_{\T^d \times \R^d} L(x , v)\,d\mu(dxdv) \le \liminf_{i\to\infty} \int_{\T^d \times \R^d} L(x, v)\,d\widetilde\mu_{\tau_i}(dxdv)
= \liminf_{i\to\infty}\bar L(\tau_i).
\]
Finally, since $\bar L(\tau)$ converge to $-\alpha(H)$ as $\tau\to 0$ and $\mu$ is closed, the previous inequality  shows that $\mu$ attains the minimal action level, and therefore $\mu$ is a Mather measure.

We conclude the inclusion at the level of supports. By construction, $(x_i,v_i)\in\supp (\widetilde\mu_{\tau_i})$ and $(x_i,v_i)\to(x,v)$.
We recall that: if $\nu_n\rightharpoonup\nu$ and $z_n\in\supp(\nu_n)$ with $z_n\to z$, then $z\in\supp\nu$; applying it to \eqref{eq:weakconv} yields $(x,v)\in\supp(\mu)$.
Since $\mu$ is a continuous Mather measure, $\supp(\mu)\subset \widetilde{\mathcal M}_L$ by definition of the Mather set as the union of supports of Mather measures. Hence $(x,v)\in \widetilde{\mathcal M}_L$, proving that every limit point of $\widetilde{\mathcal M}^{\,\tau_i}_L$ belongs to $\widetilde{\mathcal M}_L$, i.e.,
\[
\limsup_{\tau\to 0}\,\widetilde{\mathcal M}^{\,\tau}_L \subset \widetilde{\mathcal M}_L
\]
which yields the conclusion. \end{proof} 


\subsection{The Kuratowski liminf of $\widetilde{\mathcal{M}}_{L}^{\tau}$: generic Lagrangian}
Let $M=\T^d$ and let $\mathcal{F}$ be a finite-dimensional convex family of strong Tonelli Lagrangians. 
 By \cite[Theorem~1]{BernardContreras2008}, there exists a residual set $\mathcal{O}\subset C^\infty(M)$ such that, for every $u\in\mathcal{O}$ and $L\in\mathcal{F}$, the perturbed Lagrangian $L-u$ admits at most $1+\dim\mathcal{F}$ ergodic Mather measures. Recall that a Mather measure $\mu \in \mathcal{P}(\T^d \times \R^d)$ is said to be \emph{ergodic} if it is invariant under the Euler--Lagrange flow and every invariant Borel set has either zero or full $\mu$-measure. We therefore fix, in the rest of this section, a generic Lagrangian in this class and denote it again by $L$. With this convention, $L$ has finitely many ergodic Mather measures. 
 
 We recall that $\widetilde{\mathcal{M}}_L$ is a compact convex subset of $\mathcal{P}(\T^d \times \R^d)$ and   its extreme points coincide with the ergodic Mather measures (see \cite{Mather1991}). In particular, if $\mu \in \widetilde{\mathcal{M}}_L$ is ergodic, then by a standard separation argument there exists $\psi \in C_b(\T^d \times \R^d)$ such that
 \begin{equation}\label{selection_ergodic}
 	\int \psi\, d\mu < \int \psi\, d\rho
 \qquad \forall \rho \in \widetilde{\mathcal{M}}_L,\ \rho \neq \mu.
 \end{equation}

For the proof of the convergence of the $\liminf$ of the discrete Mather sets, we need a preliminary lemma. Recall that $\mathfrak{A}_\tau$ denotes the discrete action and $\mathcal{H}_\tau$ the class of discrete holonomic measures defined in \Cref{closed_measure}.
\begin{lemma}\label{lem:viscosity_selection}
	Let $\mu$ an ergodic Mather measure and $\psi \in C_b(\T^d\times\R^d)$  as in \eqref{selection_ergodic}.	Define the perturbed discrete functional
	\begin{equation*}\label{eq:Ftau_eps}
		F_{\tau,\varepsilon}(\nu):=\frac{1}{\tau}\,\mathfrak{A}_\tau(\nu)+\varepsilon\int_{\T^d\times\R^d}\psi\,d\widetilde{\nu},
		\qquad \nu\in\mathcal{H}_\tau,\quad \widetilde{\nu}=\Theta^{-1}_{\tau\#}\nu
	\end{equation*}
	and let $\nu_{\tau,\varepsilon}\in\arg\displaystyle{\min_{\mathcal{H}_\tau}}F_{\tau,\varepsilon}$. 
	Then for any sequence $\tau_k\to 0$, there exists a sequence $\varepsilon_k\to 0$   such that  for the corresponding minimizers
	$\nu_k:=\nu_{\tau_k,\varepsilon_k}$,
	\begin{equation*}\label{eq:selected_conv}
		\widetilde{\nu}_k \rightharpoonup \mu
		\qquad\text{in }\mathcal{P}(\T^d\times\R^d).
	\end{equation*}
	In particular, for every $z\in\supp(\mu)$ there exist $z_k\in\supp(\widetilde{\nu}_k)$ such that $z_k\to z$.
\end{lemma}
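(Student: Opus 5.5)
The argument is a $\Gamma$-convergence / selection argument that reuses the compactness and lower semicontinuity machinery of \Cref{prop:mather_limsup}, with the penalty $\varepsilon\int\psi$ used to single out $\mu$ among all limit Mather measures.

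\emph{Step 1: compactness of the minimizers.} Fix $\tau$ and $\varepsilon\in(0,1)$. The functional $F_{\tau,\varepsilon}$ is lower semicontinuous on $\mathcal H_\tau$, and $\mathcal H_\tau$ is compact in $\mathcal P(\T^d\times\T^d)$, so minimizers exist. Test $F_{\tau,\varepsilon}$ with the diagonal measure $\nu$ used in \Cref{prop:mather_limsup}. Since $\psi$ is bounded, this gives
\[
\int L\,d\widetilde\nu_{\tau,\varepsilon}\le C_0+2\|\psi\|_\infty .
\]
Superlinearity \eqref{superlinearity} then yields a uniform first-moment bound, and Prokhorov gives tightness of $\{\widetilde\nu_{\tau,\varepsilon}\}$. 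Repeating the Taylor argument of \Cref{prop:mather_limsup} shows that every weak limit of $\widetilde\nu_{\tau_k,\varepsilon_k}$ is a closed measure.

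\emph{Step 2: energy comparison.} Let $m_\tau:=\min_{\mathcal H_\tau}\frac1\tau\mathfrak A_\tau=\bar L(\tau)$. Minimality gives
\[
\bar L(\tau)+\varepsilon\int\psi\,d\widetilde\nu_{\tau,\varepsilon}\le \int L\,d\widetilde\nu_{\tau,\varepsilon}+\varepsilon\int\psi\,d\widetilde\nu_{\tau,\varepsilon}\le \frac1\tau\mathfrak A_\tau(\eta)+\varepsilon\int\psi\,d\widetilde\eta
\]
for every $\eta\in\mathcal H_\tau$. The key ingredient is a \emph{recovery sequence} for $\mu$: discrete holonomic $\eta_\tau$ with $\widetilde\eta_\tau\rightharpoonup\mu$ and
\[
\delta_\tau:=\int L\,d\widetilde\eta_\tau-\bar L(\tau)\to 0 .
\]
To construct it, use that $\mu$ is invariant and ergodic and supported on $\widetilde{\mathcal A}_L$, which is a compact, bounded-velocity set. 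Define $\widetilde\eta_\tau$ by averaging, over a generic orbit $\gamma$ of $\mu$, the discrete edges $(\gamma(k\tau),\gamma((k+1)\tau))$; equivalently, set
\[
\eta_\tau:=\frac1\tau\int_0^\tau (\Phi_s)_\#\bigl(\text{Ces\`aro average of edges}\bigr)\,ds .
\]
A cleaner choice is $\eta_\tau:=(\pi,\pi\circ\phi^L_\tau)_\#\mu$. This is discrete holonomic by invariance of $\mu$. Its lift is the push-forward of $\mu$ under $(x,v)\mapsto\bigl(x,(\pi\phi_\tau(x,v)-x)/\tau\bigr)$, which converges uniformly on $\supp\mu$ to the identity, so $\widetilde\eta_\tau\rightharpoonup\mu$ and $\int L\,d\widetilde\eta_\tau\to\int L\,d\mu=-\alpha(H)$. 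Combined with $\bar L(\tau)\to-\alpha(H)$, this gives $\delta_\tau\to0$.

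\emph{Step 3: choice of $\varepsilon_k$ and identification of the limit.} Take
\[
\varepsilon_k:=\sqrt{\delta_{\tau_k}+\bigl|\bar L(\tau_k)+\alpha(H)\bigr|+\tau_k}\to0 .
\]
From Step 2, $\int L\,d\widetilde\nu_k\le \bar L(\tau_k)+\delta_{\tau_k}+2\varepsilon_k\|\psi\|_\infty$. Let $\rho$ be any limit point of $\widetilde\nu_k$. By lower semicontinuity, $\int L\,d\rho\le-\alpha(H)$, and $\rho$ is closed, so $\rho$ is a Mather measure. Next, drop the nonnegative term $\int L\,d\widetilde\nu_k-\bar L(\tau_k)\ge0$ in Step 2 and divide by $\varepsilon_k$:
\[
\int\psi\,d\widetilde\nu_k\le \frac{\delta_{\tau_k}}{\varepsilon_k}+\int\psi\,d\widetilde\eta_{\tau_k}.
\]
Since $\delta_{\tau_k}/\varepsilon_k\le\sqrt{\delta_{\tau_k}}\to0$, letting $k\to\infty$ gives $\int\psi\,d\rho\le\int\psi\,d\mu$; passing $\int\psi\,d\widetilde\nu_k$ to the limit is allowed because $\psi\in C_b$ and the convergence is weak. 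By \eqref{selection_ergodic}, $\rho=\mu$. Since every subsequence has a further subsequence converging to $\mu$, the whole sequence satisfies $\widetilde\nu_k\rightharpoonup\mu$.

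\emph{Step 4: support statement and main obstacle.} The final claim is the standard lower semicontinuity of supports under weak convergence. For $z\in\supp\mu$ and each ball $B_r(z)$, the portmanteau theorem gives $\liminf_k\widetilde\nu_k(B_r(z))\ge\mu(B_r(z))>0$, so $\supp\widetilde\nu_k$ meets $B_r(z)$ for large $k$. A diagonal choice of $r\to0$ produces $z_k\to z$. The main obstacle is the recovery sequence in Step 2, specifically checking that $(\pi,\pi\circ\phi_\tau)_\#\mu$ has discrete action per unit time converging to $\int L\,d\mu$ and is holonomic on the torus. Both follow from flow invariance and the compactness of $\supp\mu$, which keeps the difference quotients $(\gamma(\tau)-\gamma(0))/\tau$ uniformly close to $\dot\gamma(0)$. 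I would verify this first; if it failed, I would fall back on the time-averaged Ces\`aro construction along a $\mu$-generic orbit.
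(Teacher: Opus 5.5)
Your proposal is correct and follows essentially the paper's route: penalize by $\varepsilon\int\psi$, compare with a recovery sequence for $\mu$, use the superlinear action bound for tightness and closedness of limits, apply lower semicontinuity to get a Mather limit, and identify that limit as $\mu$ via \eqref{selection_ergodic}. Your recovery measure $(\pi,\pi\circ\phi^L_\tau)_\#\mu$ coincides with the paper's time-averaged one, which collapses to it by invariance of $\mu$; the difference is that your explicit choice of $\varepsilon_k\to0$ with $\delta_{\tau_k}/\varepsilon_k\to0$ does in a single limit the diagonal step the paper only sketches (first $\tau\to0$ at fixed $\varepsilon$, then $\varepsilon\to0$), and you use the correct sign $\int L\,d\mu=-\alpha(H)$ where the appendix writes $\alpha(H)$.
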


\begin{theorem}\label{KMather}
Assume that	$L$ has finitely many ergodic Mather measures. 
	Then, the following limit holds in the Kuratowski sense:
	  \begin{equation}\label{eq:liminf_mather}
	  	\widetilde{\mathcal M}_L \subset \liminf_{\tau\to 0}\,\widetilde{\mathcal M}^{\,\tau}_L.
	  \end{equation}
As a consequence, 
		\[
		\lim_{\tau \to 0} \widetilde{\mathcal M}^{\,\tau}_L = \widetilde{\mathcal M}_L.
		\]
	\end{theorem}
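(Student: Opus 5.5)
The plan is to reduce \eqref{eq:liminf_mather} to a statement about a single ergodic Mather measure, and then use \Cref{lem:viscosity_selection} to produce discrete Mather measures converging to it.

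\emph{Reduction to ergodic measures.} Let $\mu^1,\dots,\mu^m$ be the finitely many ergodic Mather measures. By Mather's theory, every Mather measure is a convex combination of the extreme points $\mu^j$. Hence its support is contained in $\bigcup_j \operatorname{spt}(\mu^j)$. Therefore
\[
\widetilde{\mathcal M}_L=\bigcup_{j=1}^m \operatorname{spt}(\mu^j),
\]
and here the closure in the definition is harmless, since the union is finite and each support is closed. The lower Kuratowski limit is a closed set. So it suffices to fix one ergodic Mather measure $\mu$, one point $z\in\operatorname{spt}(\mu)$ and an arbitrary sequence $\tau_k\to0$, and to find $z_k\in \widetilde{\mathcal M}^{\tau_k}_L$ with $z_k\to z$. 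Since the sequence is arbitrary, the sequential characterization of Kuratowski convergence then gives $z\in\liminf_{\tau\to0}\widetilde{\mathcal M}^{\tau}_L$. Combining \eqref{eq:liminf_mather} with \Cref{prop:mather_limsup} yields the full convergence.

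\emph{Applying the selection lemma.} Take $\psi$ as in \eqref{selection_ergodic} and apply \Cref{lem:viscosity_selection} to $\tau_k$. The difficulty is that the minimizers $\nu_k=\nu_{\tau_k,\varepsilon_k}$ of $F_{\tau_k,\varepsilon_k}$ need not be discrete Mather measures, because the perturbation may trade action for a lower value of $\int\psi$. I would repair this at fixed $\tau$ by a $\Gamma$-convergence argument as $\varepsilon\to0$:
\begin{itemize}
\item The set $\mathcal H_\tau$ is weakly compact, since it lives on the compact space $\T^d\times\T^d$.
\item The functional $\nu\mapsto\int\psi\,d\widetilde\nu$ is continuous on $\mathcal H_\tau$, because $\Theta_\tau^{-1}$ is continuous.
\item Consequently every weak limit of $\nu_{\tau,\varepsilon}$ as $\varepsilon\to0$ minimizes $\mathfrak A_\tau$, so it is a discrete Mather measure, and among those it minimizes $\int\psi$.
\end{itemize}
Thus, for each $k$, shrinking $\varepsilon_k$ if necessary, I can find a discrete Mather measure $\mu_k$ whose lift $\widetilde\mu_k$ is within distance $1/k$ of $\widetilde\nu_k$ in a metric metrizing weak convergence on $\mathcal P(\T^d\times\R^d)$.

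For this step I need the conclusion of \Cref{lem:viscosity_selection} to persist when $\varepsilon_k$ is replaced by any smaller sequence tending to zero. I expect this to follow from its proof: there the only requirement is that $\varepsilon_k\to0$ slowly enough compared with the consistency error $|\bar L(\tau_k)+\alpha(H)|$. If that is not the case, I would instead check directly that the lexicographic minimizers converge to $\mu$. The argument would be as follows.
\begin{itemize}
\item Their weak limits are Mather measures, by the argument of \Cref{prop:mather_limsup}.
\item For such a limit $\rho$, one would show $\int\psi\,d\rho\le\int\psi\,d\mu$ by constructing discrete holonomic measures approximating $\mu$. These would come from Cesàro averages of the time-$\tau$ samples of $\mu$-generic orbits, corrected to be discrete Mather.
\item The strict inequality \eqref{selection_ergodic} then forces $\rho=\mu$.
\end{itemize}
I expect this alternative to be the main obstacle.

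\emph{Conclusion.} Once $\widetilde\mu_k\rightharpoonup\mu$ with $\mu_k$ discrete Mather measures, the standard support property of weak convergence applies. If $z\in\operatorname{spt}(\mu)$ and $B$ is any ball around $z$, then $\liminf_k\widetilde\mu_k(B)\ge\mu(B)>0$. This yields points $z_k\in\operatorname{spt}(\widetilde\mu_k)\subset\widetilde{\mathcal M}^{\tau_k}_L$ with $z_k\to z$, which completes the argument.
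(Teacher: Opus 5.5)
Your reduction to one ergodic $\mu$ is fine, and so is your closing support argument. You also correctly spot what the paper's proof glosses over: the minimizers $\nu_k$ of $F_{\tau_k,\varepsilon_k}$ are only $O(\varepsilon_k)$-minimizers of $\tau_k^{-1}\mathfrak A_{\tau_k}$, and such measures need not have supports near $\widetilde{\mathcal M}^{\tau_k}_L$. The paper's superlinearity argument for this concentration is not valid: mixing in a small multiple of any holonomic measure costs arbitrarily little action.

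Neither of your repairs closes the gap. Shrinking $\varepsilon_k$ works against the selection. The term $\varepsilon_k\int\psi$ can pull minimizers towards $\mu$ only if $\varepsilon_k$ dominates the excess of $\tau_k^{-1}\mathfrak A_{\tau_k}$ near $\mu$ over $\bar L(\tau_k)$. As you say yourself, this needs $\varepsilon_k\to0$ slowly, not along an arbitrary smaller sequence. For fixed $\tau$, the minimizers tend as $\varepsilon\to0$ to the $\psi$-minimizing discrete Mather measures. So persistence under smaller $\varepsilon_k$ amounts to your alternative claim. That claim needs discrete Mather measures whose lifts approximate $\mu$, which is the statement itself.

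The gap cannot be closed: with two ergodic Mather measures, \eqref{eq:liminf_mather} is false in general. Here is a counterexample.
\begin{itemize}
\item On $\T^2$ fix $p$ and a smooth radial cutoff $\chi\ge0$ supported in a small disc around $p$, with $\chi(r_0)>0$.
\item Let $X(x)=\chi(|x-p|)J(x-p)$, with $J$ the rotation by $\pi/2$, and let $\Gamma=\{|x-p|=r_0\}$; it is a periodic orbit of $\dot x=X(x)$.
\item Let $W\ge0$ be smooth with $W^{-1}(0)=\{p\}\cup\Gamma$, and set
\[
L(x,v)=\tfrac12|v-X(x)|^2+W(x),
\]
which is Tonelli.
\end{itemize}
Since $L\ge0$ and $\delta_{(p,0)}$ is closed with zero action, the Mather measures are the closed measures of zero action. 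These are the lifts to $\{v=X(x)\}$ of $X$-invariant measures on $\{p\}\cup\Gamma$. There are exactly two ergodic ones: $\delta_{(p,0)}$ and the uniform measure on $\{(x,X(x)):x\in\Gamma\}$.

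On the discrete side, $\mathcal L_\tau(x,y)=\frac1{2\tau}|y-x-\tau X(x)|^2+\tau W(x)\ge0$, and $\delta_{(p,p)}\in\mathcal H_\tau$ has zero action. So every discrete Mather measure has zero action. It is therefore carried by $\{y=x+\tau X(x),\ x\in\{p\}\cup\Gamma\}$, and its first marginal is invariant under $E_\tau(x)=x+\tau X(x)$. On $\Gamma$ we have $|E_\tau(x)-p|^2=r_0^2(1+\tau^2\chi(r_0)^2)>r_0^2$, so this marginal must be $\delta_p$. Hence $\widetilde{\mathcal M}^{\tau}_L=\{(p,0)\}$ for all small $\tau$, and the liminf misses $\{(x,X(x)):x\in\Gamma\}$.

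In this example \Cref{lem:viscosity_selection}, applied to the measure on $\Gamma$, does hold with e.g.\ $\varepsilon_k=\tau_k$: discrete periodic orbits on $\Gamma$ give $\tau^{-1}\mathfrak A_\tau=O(\tau^2)$. But the selected $\nu_k$ live near $\Gamma$, where there is no discrete Mather measure, and shrinking $\varepsilon_k$ drives them to $\delta_{(p,p)}$. This is exactly where both your argument and the paper's break.

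What survives is the case of a unique Mather measure $\mu$. Then the lifts of arbitrary discrete Mather measures converge to $\mu$ by the argument of \Cref{prop:mather_limsup}. Your concluding support argument then gives \eqref{eq:liminf_mather} directly, with no selection needed.
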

	\proof		
		Let $\widetilde{\mathcal{M}}_{\mathrm{erg}}$ denote the set of ergodic Mather measures for $L$. It is well known that
		\[
		\widetilde{\mathcal M}_L 
		= 
		\overline{\bigcup\{ \supp(\mu): \mu\in \widetilde{\mathcal{M}}_{\mathrm{erg}} \}}.
		\]
		Since the Kuratowski $\liminf$ is closed, it is enough to prove that for every $\mu \in \widetilde{\mathcal{M}}_{\mathrm{erg}}$
		\[
		\supp(\mu) \subset \liminf_{\tau \to 0} \widetilde{\mathcal M}^{\,\tau}_L.
		\]
		Fix $\mu \in \widetilde{\mathcal{M}}_{\mathrm{erg}}$ and $z=(x,v)\in\supp(\mu)$. 
		Let $\psi$ be as in \eqref{selection_ergodic}, and let $\nu_{\tau,\varepsilon}$ be a minimizer of the penalized functional $F_{\tau,\varepsilon}$. By \Cref{lem:viscosity_selection}, for any sequence $\tau_k \to 0$ there exists $\varepsilon_k \to 0$ such that, setting $\nu_k := \nu_{\tau_k,\varepsilon_k}$, we have
		\[
		\widetilde{\nu}_k \rightharpoonup \mu
		\quad \text{in } \mathcal{P}(\T^d\times\R^d)
		\]
		and, in particular, there exist points $z_k \in \supp(\widetilde{\nu}_k)$ such that
		$z_k \to z.$

	We now relate $\supp(\widetilde{\nu}_k)$ to the discrete Mather set. Since $\nu_k$ minimizes $F_{\tau_k,\varepsilon_k}$, we first observe that it is an almost-minimizer for the discrete action. Indeed, using the boundedness of $\psi$, we have
	\[
	\frac{1}{\tau_k}\mathfrak{A}_{\tau_k}(\nu_k)
	\le \inf_{\mathcal{H}_{\tau_k}} \frac{1}{\tau_k}\mathfrak{A}_{\tau_k} + C\,\varepsilon_k,
	\]
	for some constant $C>0$ independent of $k$. Hence $\nu_k$ is an $O(\varepsilon_k)$-minimizer of the discrete action.	We now claim that this implies that the support of $\widetilde{\nu}_k$ concentrates near the set of exact minimizers $\widetilde{\mathcal M}^{\,\tau_k}_L$. We argue by contradiction. Fix $\delta>0$ and suppose that there exists a subsequence (not relabeled) and points $z_k\in\supp(\widetilde{\nu}_k)$ such that
	\[
	\mathrm{dist}\bigl(z_k,\widetilde{\mathcal M}^{\,\tau_k}_L\bigr)\ge \delta.
	\]
	 By superlinearity of $L$, there exists $\eta(\delta)>0$ such that any holonomic measure assigning positive mass to points at distance at least $\delta$ from $\widetilde{\mathcal M}^{\,\tau_k}_L$ has action at least
	\[
	\inf_{\mathcal{H}_{\tau_k}} \frac{1}{\tau_k}\mathfrak{A}_{\tau_k} + \eta(\delta)
	\]
	Indeed, let $\rho_{\tau_k}$ be any of such a holonomic measures. Then, 
	\begin{multline*}
	\frac{1}{\tau_k}\mathfrak{A}_{\tau_k}(\rho_{\tau_k}) = \frac{1}{\tau_k}\int_{\T^d \times \T^d} \mathcal L_{\tau_k}\left(x, y \right)\; d\rho_{\tau_k}(dxdy) 
	\\
	= \frac{1}{\tau_k}\int_{\widetilde{\mathcal M}^{\,\tau_k}_L} \mathcal L_{\tau_k}\left(x, y \right)\; d\rho_{\tau_k}(dxdy) + \frac{1}{\tau_k}\int_{(\T^d \times \T^d) \backslash \widetilde{\mathcal M}^{\,\tau_k}_L} \mathcal L_{\tau_k}\left(x, y \right)\; d\rho_{\tau_k}(dxdy)
	\\
	= \frac{1}{\tau_k}\int_{\widetilde{\mathcal M}^{\,\tau_k}_L} \mathcal L_{\tau_k}\left(x, y \right)\; d\rho_{\tau_k}(dxdy) + \int_{(\T^d \times \R^d) \backslash \widetilde{\mathcal M}^{\,\tau_k}_L} L\left(x, v \right)\; d\widetilde \rho_{\tau_k}(dxdv)
	\\
	\ge \frac{1}{\tau_k}\int_{\widetilde{\mathcal M}^{\,\tau_k}_L} \mathcal L_{\tau_k}\left(x, y \right)\; d\rho_{\tau_k}(dxdy) + \int_{(\T^d \times \R^d) \backslash \widetilde{\mathcal M}^{\,\tau_k}_L} \big[K|v| + C(K)\big]\; d\widetilde \rho_{\tau_k}(dxdv)
	\\
	\ge \frac{1}{\tau_k}\int_{\widetilde{\mathcal M}^{\,\tau_k}_L} \mathcal L_{\tau_k}\left(x, y \right)\; d\rho_{\tau_k}(dxdy) + K\mathrm{dist}\bigl(z_k,\widetilde{\mathcal M}^{\,\tau_k}_L\bigr) \ge \inf_{\mathcal{H}_{\tau_k}} \frac{1}{\tau_k}\mathfrak{A}_{\tau_k} + \eta(\delta).
	\end{multline*}
	Since $z_k\in\supp(\widetilde{\nu}_k)$, this contradicts the fact that $\nu_k$ is an $O(\varepsilon_k)$-minimizer when $\varepsilon_k\to0$.
	
	Therefore, for every $\delta>0$, there exists $k_\delta$ such that for all $k\ge k_\delta$ and every $z_k\in\supp(\widetilde{\nu}_k)$ one can find $\hat z_k\in \widetilde{\mathcal M}^{\,\tau_k}_L$ satisfying
	\[
	|z_k-\hat z_k|<\delta.
	\]
	Choosing $\delta=\delta_k\to0$, we obtain
	\[
	|z_k-\hat z_k|\to0.
	\]
	
	Combining this with the convergence $z_k\to z$, we deduce
	\[
	\hat z_k \to z.
	\]
	Since $\hat z_k \in \widetilde{\mathcal M}^{\,\tau_k}_L$, this proves that
	\[
	z \in \liminf_{k\to\infty} \widetilde{\mathcal M}^{\,\tau_k}_L.
	\]
 and therefore \eqref{eq:liminf_mather}. \qed
	
\section{Convergence of the Aubry set}\label{sec:conv_aubry}
In this section, we investigate the convergence of the discrete Aubry sets as $\tau \to 0$. 
We first study the $\limsup$, and then prove the $\liminf$ convergence under a hyperbolicity assumption on the continuous Aubry set and on the ferromagnetic  nature of the Lagrangian.
\subsection{The Kuratowski limsup of   $\widetilde{\mathcal{A}}_{L}^{\tau}$}
The convergence of                                                                                                                                                                                                                                                                                                                                                                                                                                                                                                                                                                                                                                                                                                                                                                                                                                                                                                                                                                                                                                                                                                                                                                                                                                                                                                                                                                                                                                                                                                       $\limsup_\tau \widetilde{\mathcal{A}}_{L}^{\tau}$ is consequence of the following result about the asymptotic behavior of calibrated configurations as  $\tau$ converges to zero.

\begin{proposition}\label{lem:4.2}
Let $\tau_i \to 0$ as $i \to \infty$ and let $u_{\tau_i}$ be a solution of \eqref{discrete_laxoleinik_classical}.
For any sequence $x_i\in\mathbb{R}^d$ converging to some $x_0\in\mathbb{R}^d$, denote by the sequence $\left\{x_{n,x_i}^{\tau_i}\right\}_{n\leq 0}$ with $x_{0,x_i}^{\tau_i}=x_i$ the calibrated configuration of $u_{\tau_i}(x_i)$. Let $\gamma_{\tau_i,}^{x_i}(t)$ be the piecewise linear curve satisfying $\gamma_{\tau_i,}^{x_i}(n\tau_i)=x_{n,x_i}^{\tau_i}$. Then,
\begin{enumerate}
    \item[(i)] there exists a curve $\gamma_0:(-\infty,0]\rightarrow\mathbb{T}^d$ such that $\gamma_{\tau_i,}^{x_i}\rightarrow\gamma_0$ uniformly and $\dot{\gamma}_{\tau_i}^{x_i}\rightarrow\dot{\gamma}_0$ in $L^1$-norm on every compact subset of $(-\infty,0]$,
    
    \item[(ii)] there exists a constant $C$ such that the curve $\gamma_0\in\mathcal{C}^2\left((-\infty,0],\mathbb{R}^d\right)$ and satisfies $\|\dot{\gamma}_0\|_{\infty}\leq C$, $\operatorname{Lip}\left(\dot{\gamma}_0\right)\leq C$,
    
    \item[(iii)] for any $t\geq 0$, we have
    \[
    u(x_0) - u(\gamma_{0}(-t))= \int_{-t}^0 L\left(\gamma_0(s),\dot{\gamma}_0(s)\right)ds + \alpha(H)t.
    \]
\end{enumerate}
\end{proposition}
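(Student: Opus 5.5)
The plan is to combine uniform velocity bounds with a discrete Euler--Lagrange equation, pass to the limit by Arzel\`a--Ascoli, and identify the limit through the convergence of $u_{\tau_i}$ and $\bar L(\tau_i)$.

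\textbf{Compactness and uniform convergence.} By \Cref{lem:bounded_velocitites}, the discrete velocities $v_n^{\tau_i}=(x^{\tau_i}_{n+1,x_i}-x^{\tau_i}_{n,x_i})/\tau_i$ satisfy $|v_n^{\tau_i}|\le D$ uniformly in $i$ and $n\le0$. The piecewise linear curves $\gamma^{x_i}_{\tau_i}$ (lifted to $\R^d$) are therefore $D$-Lipschitz and start at $x_i\to x_0$. Arzel\`a--Ascoli on each $[-T,0]$, followed by a diagonal extraction, gives a Lipschitz limit $\gamma_0$ with $\gamma^{x_i}_{\tau_i}\to\gamma_0$ locally uniformly. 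To get $L^1$ convergence of the derivatives and the regularity in (ii), I would use the discrete Euler--Lagrange equation. Since each $x_n$ with $n\le -1$ minimizes $z\mapsto \mathcal L_\tau(x_{n-1},z)+\mathcal L_\tau(z,x_{n+1})$ (otherwise the calibration identity combined with \eqref{eq:LO-ineq} would be violated), we get
\[
\frac{D_vL(x_n,v_n)-D_vL(x_{n-1},v_{n-1})}{\tau}=D_xL(x_n,v_n).
\]
The right-hand side is bounded because $|v_n|\le D$. Hence $p_n:=D_vL(x_n,v_n)$ is a discrete-Lipschitz sequence, uniformly in $\tau$. Strict convexity makes $p\mapsto v$ locally Lipschitz (via $D_pH$), so $n\mapsto v_n$ is also discrete-Lipschitz with a uniform constant $C$. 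Consequently the piecewise constant velocities converge uniformly on compacts to a Lipschitz function $w$, and this gives $L^1_{\rm loc}$ convergence. Passing to the limit in $\gamma_{\tau_i}(t)-\gamma_{\tau_i}(s)=\int_s^t\dot\gamma_{\tau_i}$ shows $w=\dot\gamma_0$, so $\|\dot\gamma_0\|_\infty\le C$ and $\operatorname{Lip}(\dot\gamma_0)\le C$. Passing to the limit in the summed discrete Euler--Lagrange equation gives $D_vL(\gamma_0,\dot\gamma_0)(t)-D_vL(\gamma_0,\dot\gamma_0)(s)=\int_s^tD_xL\,dr$, i.e.\ the continuous Euler--Lagrange equation. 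Thus $\gamma_0$ is $C^2$ by the Tonelli flow, which proves (i) and (ii).

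\textbf{Calibration (iii).} Summing \eqref{eq:calib-L} over $n=-N,\dots,-1$ with $N=\lfloor t/\tau_i\rfloor$ gives
\[
u_{\tau_i}(x_i)-u_{\tau_i}\bigl(\gamma_{\tau_i}(-N\tau_i)\bigr)=\int_{-N\tau_i}^0L(\gamma_{\tau_i}(\lfloor s\rfloor_{\tau_i}),\dot\gamma_{\tau_i}(s))\,ds-N\tau_i\bar L(\tau_i).
\]
The right-hand side converges to $\int_{-t}^0L(\gamma_0,\dot\gamma_0)+\alpha(H)t$, using uniform convergence of the velocities and \cite[Theorem 9]{Su} for $\bar L(\tau)\to-\alpha(H)$. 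For the left-hand side, the $u_{\tau}$ are uniformly Lipschitz \cite[Proposition 4]{Iturriaga-Wang}. After normalizing by constants, we extract $u_{\tau_i}\to u$ uniformly. Passing to the limit in the discrete Lax--Oleinik equation, i.e.\ iterating it $\lfloor t/\tau\rfloor$ times and comparing with $h_t$ via consistency of Riemann sums, shows that $u$ is a viscosity solution of \eqref{HJ}. Identity (iii) then holds for this $u$.

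\textbf{Main obstacle.} I expect the hardest step to be this identification of the limit. One must show that the limit of $u_{\tau_i}$ is a weak KAM solution, and that the statement's $u$ can be taken to be that limit, since uniqueness fails in general; I would phrase (iii) for the subsequential limit. It is also essential to justify the discrete Euler--Lagrange equation, since $u_\tau$ is only Lipschitz. The key point is that it follows from minimality of the two-step action given by the calibration, not from differentiating $u_\tau$.
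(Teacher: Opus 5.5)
Your proposal is correct and follows essentially the same route as the paper. The shared steps are: the uniform velocity bound of \Cref{lem:bounded_velocitites}; the discrete Euler--Lagrange equation from two-step minimality, which gives $|v_n-v_{n-1}|\le C\tau$; Arzel\`a--Ascoli for both the interpolating curves and the piecewise-constant velocities; and the limit in the summed calibration identity using $\bar L(\tau)\to-\alpha(H)$. You also fill in steps the paper's proof leaves implicit: you derive two-step minimality from \eqref{eq:LO-ineq}, get $C^2$ regularity from the integrated Euler--Lagrange equation, and make explicit that the $u$ in (iii) is a subsequential uniform limit of the normalized $u_{\tau_i}$, shown to be a weak KAM solution.
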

\begin{proof} 
Fix $\tau_i \to 0$ as $i \to \infty$. Denote 
\begin{equation}\label{discrete_vel}
	v_{n,x_i}^{\tau_i} := \frac{1}{\tau_i}\left(x_{n+1,x_i}^{\tau_i} - x_{n,x_i}^{\tau_i}\right)
\end{equation}
for any integer $n \leq -1$. By the properties of calibrated configurations, there exists a constant $D$ such that (see \Cref{lem:bounded_velocitites})
\begin{equation}\label{bd_discrete_vel}
	\left|v_{n,x_i}^{\tau_i}\right| \leq D.
\end{equation}
We first prove that there exists a constant $C$ such that
\[
\left|v_{n,x_i}^{\tau_i} - v_{n-1,x_i}^{\tau_i}\right| \leq C\tau_i, \quad \forall n \leq 0.
\]

For any $n \leq -1$ the calibrated condition implies that $x_{n,x_i}^{\tau_i}$ satisfies the discrete variational principle, i.e., 
each internal point $x_{n,x_i}^{\tau_i}$ minimises the two-step action
$\mathcal{L}_\tau(x_{n-1,x_i}^{\tau_i}, z) + \mathcal{L}_\tau(z, x_{n+1,x_i}^{\tau_i}).$
Thus, we obtain the first-order condition
\[
\frac{\partial L}{\partial v}\left(x_{n-1,x_i}^{\tau_i}, v_{n-1,x_i}^{\tau_i}\right) + \tau_i \frac{\partial L}{\partial x}\left(x_{n,x_i}^{\tau_i}, v_{n,x_i}^{\tau_i}\right) - \frac{\partial L}{\partial v}\left(x_{n,x_i}^{\tau_i}, v_{n,x_i}^{\tau_i}\right) = 0,
\]
which implies
\[
\frac{1}{\tau_i}\left[\frac{\partial L}{\partial v}\left(x_{n,x_i}^{\tau_i}, v_{n,x_i}^{\tau_i}\right) - \frac{\partial L}{\partial v}\left(x_{n-1,x_i}^{\tau_i}, v_{n-1,x_i}^{\tau_i}\right)\right] = \frac{\partial L}{\partial x}\left(x_{n,x_i}^{\tau_i}, v_{n,x_i}^{\tau_i}\right).
\]
By the strict convexity of $L$, there exists $c(D) > 0$ such that
\[
h^T \cdot \frac{\partial^2 L}{\partial v^2}(x,v) \cdot h \geq c(D)|h|^2, \quad \forall h \in \mathbb{R}^d, \forall (x,v) \in \mathbb{T}^d \times B(0,D).
\]
By the mean value theorem:
\begin{multline*}
\frac{\partial L}{\partial v}\left(x_{n,x_i}^{\tau_i}, v_{n,x_i}^{\tau_i}\right) - \frac{\partial L}{\partial v}\left(x_{n-1,x_i}^{\tau_i}, v_{n-1,x_i}^{\tau_i}\right)
\\
= \int_0^1 \left[ \frac{\partial^2 L}{\partial x \partial v}(\xi(s)) \cdot (x_{n,x_i}^{\tau_i} - x_{n-1,x_i}^{\tau_i}) + \frac{\partial^2 L}{\partial v^2}(\xi(s)) \cdot (v_{n,x_i}^{\tau_i} - v_{n-1,x_i}^{\tau_i}) \right] ds,
\end{multline*}
where 
\[
\xi(s) = (x_{n-1,x_i}^{\tau_i} + s(x_{n,x_i}^{\tau_i} - x_{n-1,x_i}^{\tau_i}), v_{n-1,x_i}^{\tau_i} + s(v_{n,x_i}^{\tau_i} - v_{n-1,x_i}^{\tau_i})).
\]
Thus, we obtain:
\[
\frac{1}{\tau_i}\frac{\partial L}{\partial x}\left(x_{n,x_i}^{\tau_i}, v_{n,x_i}^{\tau_i}\right)= \int_0^1 \left[ \frac{\partial^2 L}{\partial x \partial v}(\xi(s)) \cdot (x_{n,x_i}^{\tau_i} - x_{n-1,x_i}^{\tau_i}) + \frac{\partial^2 L}{\partial v^2}(\xi(s)) \cdot (v_{n,x_i}^{\tau_i} - v_{n-1,x_i}^{\tau_i}) \right] ds
\]
which yields to 
\begin{equation}\label{stima1}
    c(D)\left|v_{n,x_i}^{\tau_i} - v_{n-1,x_i}^{\tau_i}\right| \leq \sup_{x \in \mathbb{T}^d, |v| \leq D} \left|\frac{\partial^2 L}{\partial x \partial v}(x,v)\right| \cdot \left|x_{n,x_i}^{\tau_i} - x_{n-1,x_i}^{\tau_i}\right| + \tau_i \sup_{x \in \mathbb{T}^d, |v| \leq D} \left|\frac{\partial L}{\partial x}(x,v)\right|.
\end{equation}
Since by \Cref{lem:bounded_velocitites}, we have
\[
\left|x_{n,x_i}^{\tau_i} - x_{n-1,x_i}^{\tau_i}\right| \leq \tau_i D,
\]
 the regularity of $L$ and \eqref{stima1},  implies that there exists a constant $C$ such that
\begin{equation}\label{bd_lip_discrete_vel}	
\left|v_{n,x_i}^{\tau_i} - v_{n-1,x_i}^{\tau_i}\right| \leq \tau_i C, \quad \forall n \leq 0.
\end{equation}
Let $\gamma_{\tau_i}^{x_i} : (-\infty, 0] \to \mathbb{R}^d$ be the piecewise affine path interpolating the points $x_{n,x_i}^{\tau_i}$ at time $n\tau_i$. For any $s < t \leq 0$, the curve $\gamma_{\tau_i}^{x_i}$ is Lipschitz with constant $D$. Thus, by Ascoli-Arzel\'a Theorem, taking a subsequence if necessary, we obtain that $\gamma_{\tau_i}^{x_i} \to \gamma_0$ uniformly on any compact interval of $(-\infty, 0]$ for some curve $\gamma_0$ satisfying $\gamma_0(0) = x_0$.

Next we prove that there exists a Lipschitz function $V:(-\infty,0]\to\mathbb{R}^d$ such that
\begin{equation}\label{eq:int-identity}
\int_t^0 V(s)\,ds \;=\; x_0 - \gamma_0(t), \qquad \forall\, t \le 0.
\end{equation}

Define, for each $i\in\mathbb{N}$,
the piecewise-constant map $V_i:(-\infty,0)\to\mathbb{R}^d$ by
\begin{equation*}\label{eq:Vi-def}
V_i(t)\;:=\; v^{\tau_i}_{n,x_i},
\qquad
\forall\, t \in \bigl[(n-1)\tau_i,\; n\tau_i\bigr), \ \ \forall\, n\le 0,
\end{equation*}
where $v^{\tau_i}_{n,x_i}$, defined as in \eqref{discrete_vel}, 
are the discrete velocities associated with the calibrated configuration $\{x^{\tau_i}_{n,x_i}\}_{n\le 0}$. By \eqref{bd_discrete_vel}, we have
\[
|V_i(t)|\;\le\; D, \qquad \forall\, i\in\mathbb{N},\ \forall\, t\le 0.
\]
Moreover, by \eqref{bd_lip_discrete_vel},  we have for all $s<t<0$,
\begin{equation*}\label{eq:Vi-modulus}
|V_i(t)-V_i(s)|\;\le\; C(t-s) \;+\; C\tau_i .
\end{equation*}
Although the family $\{V_i\}_i$ is not  Lipschitz continuous uniformly in
$i$, for any subsequence $i_k\to +\infty$,  $V_{i_k}$ is uniformly bounded and  equi-continous in any bounded interval. Therefore there exists   a subsequence, still denoted by $V_{i_k}$, which converges locally uniformly to a Lipschitz continuous function $ V$ such that
\begin{equation*}\label{eq:Vtilde-Lip}
| V(t)- V(s)| \;\le\; C(t-s), \qquad \forall\, s,t\in (-\infty,0].
\end{equation*}
Next, observe that for every $t<0$ and every $i_k$,
\begin{equation}\label{eq:Vi-integral-identity}
\int_t^0 V_{i_k}(s)\,ds \;=\; x_i \;-\; \gamma^{\tau_{i_k}}_{x_{i_k}}(t),
\end{equation}
because $V_{i_k}$ is the piecewise-constant velocity of the piecewise affine curve $\gamma^{\tau_{i_k}}_{x_{i_k}}$ and $\gamma^{\tau_{i_k}}_{x_{i_k}}(0)=x_{i_k}$. Since $V_{i_k}\to V$ pointwise on $(-\infty,0)$ and the family $\{V_{i_k}\}$ is uniformly bounded, by dominated convergence we have $V_{i_k}\to V$ in $L^1_{\mathrm{loc}}((-\infty,0))$. Passing to the limit in \eqref{eq:Vi-integral-identity}, we obtain
\[
\int_t^0 V(s)\,ds \;=\; x_0 \;-\; \gamma_0(t), \qquad \forall\, t<0,
\]
which is \eqref{eq:int-identity} for $t<0$. Continuity at $t=0$ gives the identity also at $t=0$.

Since $V$ is Lipschitz, it belongs to $W^{1,\infty}_\mathrm{loc}((-\infty,0);\mathbb{R}^d)$ and hence $\gamma_0$ is of class $C^1$ with
\[
\dot\gamma_0(t) \;=\; V(t) \quad\text{for a.e. }t\le 0,
\]
and $\dot\gamma_0$ is Lipschitz with Lipschitz constant at most $C$. Finally, by dominated convergence we have
\[
\dot\gamma^{\tau_i}_{x_{i_k}} \;\longrightarrow\; \dot\gamma_0
\quad\text{in } L^1\text{-norm on every compact subset of }(-\infty,0].
\]
Now we prove the calibrated condition $(iii)$. For any $n \leq 0$, the discrete calibrated condition gives:
\[
u_{\tau_{i_k}}(x_i) = u_{\tau_{i_k}}\left(\gamma_{\tau_i}^{x_i}(n\tau_i)\right) + \sum_{k=n}^{-1} \mathcal{L}_{\tau_{i_k}}\left(\gamma_{\tau_{i_k}}^{x_i}(k\tau_{i_k}), \gamma_{\tau_{i_k}}^{x_i}((k+1)\tau_{i_k})\right) - n\tau_{i_k} \bar{L}(\tau).
\]

Rewriting in integral form and taking the limit as $i \to \infty$ with $n\tau_{i_k} \to t$, we obtain:
\[
u(x_0) - u(\gamma_0(-t)) = \int_{-t}^0 L\left(\gamma_0(s), \dot{\gamma_0}(s)\right) ds + \alpha(H)t, \quad \forall t \geq 0.
\]

The curve $\gamma_0$ is of class $\mathcal{C}^2$ since it satisfies the Euler-Lagrange equation almost everywhere and $\dot{\gamma}_0$ is Lipschitz. The proof is complete.
\end{proof}

\begin{theorem}\label{cor:limsup_aubry}
If $x_i \in \widetilde{\mathcal{A}}_{L}^{\tau_i}$ and $\tau_i \to 0$, then every limit point $x_\infty$ belongs to the   Aubry set $\widetilde{\mathcal{A}}_{L}$.
Consequently,
\[
\limsup_{\tau \to 0} \widetilde{\mathcal{A}}_{L}^{\tau}  \subset \tilde{\mathcal{A}}_{L}.
\]
\end{theorem}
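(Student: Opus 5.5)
Let $(x_i,v_i)\in\widetilde{\mathcal A}_L^{\tau_i}$ with $\tau_i\to0$, and suppose $(x_i,v_i)\to(x_\infty,v_\infty)$ along a subsequence. The plan is to produce a limiting curve that is $(u,L)$-calibrated on all of $\R$, show that it solves the Euler--Lagrange equation with initial datum $(x_\infty,v_\infty)$, and conclude by the standard characterization of the Aubry set.

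\textbf{Step 1: a bi-infinite calibrated configuration.} By \Cref{prop:global_calibrated_discrete_aubry}, for each $i$ there is a bi-infinite sequence $\{x^{(i)}_k\}_{k\in\Z}$ with $x^{(i)}_0=x_i$ and $x^{(i)}_1=x_i+\tau_iv_i$, satisfying the global discrete calibration \eqref{eq:global_discrete_calibration} for $u_{\tau_i}$. Every backward tail $\{x^{(i)}_k\}_{k\le N}$ is a calibrated configuration for $u_{\tau_i}(x^{(i)}_N)$. Hence \Cref{lem:bounded_velocitites} gives $|v^{(i)}_k|\le D$ for all $k\in\Z$. The discrete Euler--Lagrange argument from the proof of \Cref{lem:4.2} gives $|v^{(i)}_k-v^{(i)}_{k-1}|\le C\tau_i$ for all $k\in\Z$, since every index is interior.

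\textbf{Step 2: passage to the limit.} Let $\gamma_i:\R\to\T^d$ be the piecewise affine interpolation with $\gamma_i(k\tau_i)=x^{(i)}_k$, and let $V_i$ be its piecewise constant velocity. Apply \Cref{lem:4.2} on each half-line $(-\infty,T]$; equivalently, repeat its Ascoli--Arzel\`a argument on $[-T,T]$ and use a diagonal extraction. This yields a subsequence with $\gamma_i\to\gamma$ locally uniformly and $V_i\to\dot\gamma$ locally uniformly, where $\gamma\in C^1(\R)$ and $\dot\gamma$ is Lipschitz. Since $V_i(0^+)=v_i\to v_\infty$, we get $\gamma(0)=x_\infty$ and $\dot\gamma(0)=v_\infty$. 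This is where the local uniform (not merely $L^1$) convergence of the velocities is essential.

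\textbf{Step 3: calibration of the limit.} For $m\tau_i\to s$ and $n\tau_i\to t$ with $s<t$, write \eqref{eq:global_discrete_calibration} in integral form:
\[
u_{\tau_i}(\gamma_i(n\tau_i))-u_{\tau_i}(\gamma_i(m\tau_i))=\int_{m\tau_i}^{n\tau_i}L\bigl(\gamma_i(\lfloor r/\tau_i\rfloor\tau_i),V_i(r)\bigr)\,dr-(n-m)\tau_i\bar L(\tau_i).
\]
By the uniform Lipschitz bound, up to a further subsequence $u_{\tau_i}-u_{\tau_i}(0)$ converges uniformly to some $u$. Stability of the discrete Lax--Oleinik equation (as in \cite{Su}) shows that $u$ is a viscosity solution of \eqref{HJ}. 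Together with $\bar L(\tau_i)\to-\alpha(H)$ and uniform convergence in the integrand, this gives
\[
u(\gamma(t))-u(\gamma(s))=\int_s^tL(\gamma,\dot\gamma)\,dr+\alpha(H)(t-s)
\]
for all $s<t$. Thus $\gamma$ is $(u,L)$-calibrated on $\R$.

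\textbf{Step 4: conclusion.} A calibrated curve is minimizing, so $\gamma$ solves the Euler--Lagrange equation and $\gamma=\gamma_{(x_\infty,v_\infty)}$. Calibration on $(-\infty,t]$ for every $t>0$ gives $\phi^L_t(x_\infty,v_\infty)\in\widetilde\Sigma_L$, hence
\[
(x_\infty,v_\infty)\in\bigcap_{t>0}\phi^L_{-t}(\widetilde\Sigma_L)=\widetilde{\mathcal A}_L.
\]

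\textbf{Main obstacle.} The delicate point is that the definition of $\widetilde{\mathcal A}_L$ refers to a fixed critical solution $u$, while the limit of $u_{\tau_i}$ may be a different weak KAM solution. I would handle this with the classical fact that the Aubry set does not depend on the chosen critical solution: a curve calibrated on all of $\R$ by some weak KAM solution lies in the Aubry set, equivalently its velocity lies in the graph where the Peierls barrier vanishes. I would cite this result (for instance from \cite{Fathi2008}) rather than reprove it.
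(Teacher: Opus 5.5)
Your Steps 1--3 are sound, and they differ usefully from the paper. The paper passes to the limit only in a backward calibrated configuration (through Proposition~\ref{lem:4.2}). That yields calibration of the limit curve on $(-\infty,0]$, i.e.\ membership in $\widetilde\Sigma_L$, whereas $\bigcap_{t>0}\phi^L_{-t}(\widetilde\Sigma_L)$ requires calibration on $(-\infty,t]$ for every $t>0$. Your use of the bi-infinite configuration from Proposition~\ref{prop:global_calibrated_discrete_aubry}, together with locally uniform convergence of the discrete velocities to get $\dot\gamma(0)=v_\infty$, supplies exactly this forward information.

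The genuine gap is in your final paragraph: the ``classical fact'' you intend to cite is false. Fix a critical solution $u$ and consider the set of $(x,v)$ whose orbit is $u$-calibrated on all of $\R$; this is precisely $\bigcap_{t>0}\phi^L_{-t}(\widetilde\Sigma_L)$ built from that $u$. It contains the Aubry set and is contained in the Ma\~n\'e set, but it depends on $u$. A curve calibrated on $\R$ by one solution is in general only semi-static. The Aubry set is the \emph{intersection} of these sets over all critical solutions, not their union; indeed, calibration of $\gamma|_{[0,\infty)}$ by the Peierls barrier $h(x,\cdot)$, with $x=\gamma(0)$, already forces $h(x,x)=0$. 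Concretely, on $\T^1$ let $L(x,v)=\frac12v^2+\sin^2(2\pi x)$, so that $\alpha(H)=0$ and the Aubry set is $\{(0,0),(\frac12,0)\}$. The periodic function $u$ with $u'=\sqrt2\,|\sin 2\pi x|$ on $(0,\frac12)$ and $u'=-\sqrt2\,|\sin 2\pi x|$ on $(\frac12,1)$ is a $C^1$, hence viscosity, solution. The heteroclinic orbit $\dot\gamma=\sqrt2\sin(2\pi\gamma)$, $\gamma(0)=\frac14$, satisfies $\frac{d}{dt}u(\gamma)=2\sin^2(2\pi\gamma)=L(\gamma,\dot\gamma)$. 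So it is $u$-calibrated on $\R$ but not contained in the Aubry set. Since $h(\frac14,\frac14)>0$, the Peierls-barrier reformulation fails as well.

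Consequently, Steps 1--3 prove only that $(x_\infty,v_\infty)$ lies in the set of orbits calibrated on $\R$ by the subsequential limit $\bar u$ of $u_{\tau_i}-u_{\tau_i}(0)$. No citation transfers this to a different critical solution. To close the argument you have two options. You can take $u:=\bar u$ in the definition of $\widetilde\Sigma_L$; this is what the paper does tacitly in Proposition~\ref{lem:4.2}(iii), where the limit of $u_{\tau_i}$ is simply called $u$. Alternatively, you can add a hypothesis such as uniqueness of the critical solution up to constants, under which the set of $u$-calibrated orbits does not depend on $u$ and coincides with the Aubry set. Note also that $\widetilde{\mathcal A}^\tau_L$ is itself defined through a particular choice of $u_\tau$, so the same dependence is present at the discrete level. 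Such an identification should therefore be regarded as part of the statement rather than a technicality of your proof.
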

\begin{proof}
	Let $\tau_i \to 0$ and let $(x_i, v_i) \in \widetilde{\mathcal{A}}_{L}^{\tau_i}$ be a sequence such that $(x_i, v_i) \to (x_\infty, v_\infty)$ as $i \to \infty$. By the definition of the discrete Aubry set, for every $n \in \mathbb{N}$, there exists a point $(x_i^{-n}, v_i^{-n}) \in \widetilde{\Sigma}_{L}^{\tau_i}$ such that $\Psi_{L, \tau_i}^n(x_i^{-n}, v_i^{-n}) = (x_i, v_i)$. Consequently, $(x_i, v_i)$ admits a   $\tau_i$-calibrated configuration $\{(x_i^{-k}, v_i^{-k})\}_{k \in \mathbb{N}}$.
	
	For each $i$, we define the curve $\gamma_i: (-\infty,0] \to \mathbb{T}^d$ as the piecewise-linear interpolation of the points $x_i^{-k}$ at times $k\tau_i$, i.e., $\gamma_i(k\tau_i) = x_i^{-k}$. Since the discrete velocities $v_i^{-k}$ are uniformly bounded by  \Cref{lem:bounded_velocitites}, the curves $\gamma_i$ are uniformly Lipschitz on compact subsets of $ (-\infty,0]$. By the Ascoli--Arzel\' Theorem, there exists a subsequence converging locally uniformly to a limit curve $\gamma_0: (-\infty,0] \to \mathbb{T}^d$ with $\gamma_0(0) = x_\infty$.
	
	Applying \Cref{lem:4.2}, we know that $\dot{\gamma}_i \to \dot{\gamma}_0$ in $L^1_{loc}$ and that $\gamma_0$ is a $C^2$ curve satisfying the continuous calibration identity:
	\[
	u(\gamma_0(t_2)) - u(\gamma_0(t_1)) = \int_{t_1}^{t_2} \big( L(\gamma_0(s), \dot{\gamma}_0(s)) + \alpha(H) \big) \, ds, \quad \forall t_1 < t_2<0.
	\]
	In particular, for any $t > 0$, the restriction of $\gamma_0$ to the interval $(-\infty, -t]$ shows that the trajectory starting from $(\gamma_0(-t), \dot{\gamma}_0(-t))$ is calibrated on $(-\infty, 0]$. By the definition of $\widetilde{\Sigma}_L$, this implies:
	\[
	(\gamma_0(-t), \dot{\gamma}_0(-t)) \in \widetilde{\Sigma}_L, \quad \forall t \ge 0.
	\]
	Since the Euler--Lagrange flow $\phi_t^L$ preserves calibrated trajectories, for every $t > 0$ we have:
	\[
	(x_\infty, v_\infty) = \phi_t^L \big( \gamma_0(-t), \dot{\gamma}_0(-t) \big) \in \phi_t^L(\widetilde{\Sigma}_L).
	\]
	Thus, $(x_\infty, v_\infty) \in \displaystyle{\bigcap_{t > 0}} \phi_{-t}^L(\widetilde{\Sigma}_L)$, which is exactly the definition of the continuous Aubry set $\widetilde{\mathcal{A}}_L$. \end{proof}

\subsection{The Kuratowski liminf  of $\widetilde{\mathcal{A}}_{L}^{\tau}$: the hyperbolic case}

For the convergence of $\liminf_{\tau} \widetilde{\mathcal{A}}_{L}^{\tau}$, we need to impose additional assumptions: namely, the hyperbolicity of the Aubry set (see \cite[Section 17]{Katok}) and the ferromagnetic nature of $L$ (see \cite{Garibaldi}). For simplicity of the reader we recall the definitions and some properties in \Cref{Appendix_B}.

\begin{theorem}\label{liminf_Aubry}
	Assume that $L$ is ferromagnetic and that the continuous Aubry set $\tilde{\mathcal{A}}_L$ is a hyperbolic invariant set for the continuous Euler-Lagrange flow. Assume that there exists a compact set $V\subset \mathbb T^d\times\mathbb R^d$ such that
$\widetilde{\mathcal A}_L^\tau\subset V$
 for   $0<\tau<\tau_0$. Then, for $\tau$ sufficiently small,
	\[
	\tilde{\mathcal{A}}_{L} \subset \liminf_{i \to \infty} \tilde{\mathcal{A}}_{L}^{\tau_i}.
	\]
	As a consequence,
		\[
	\lim_{i \to \infty} \tilde{\mathcal{A}}_{L}^{\tau_i} = \tilde{\mathcal{A}}_{L}
	\]
	in the sense of \Cref{Kuratowski}.
\end{theorem}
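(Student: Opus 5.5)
Given $(x,v)\in\widetilde{\mathcal A}_L$, I would produce points $(x_i,v_i)\in\widetilde{\mathcal A}^{\tau_i}_L$ with $(x_i,v_i)\to(x,v)$. Combined with \Cref{cor:limsup_aubry}, this gives the Kuratowski equality. The strategy is a shadowing argument. The continuous Euler--Lagrange orbit through $(x,v)$ is sampled at times $k\tau_i$, giving a discrete pseudo-orbit. This pseudo-orbit is then shadowed by a true orbit of the discrete Euler--Lagrange map. Finally, the shadowing orbit is shown to lie in $\widetilde{\mathcal A}^{\tau_i}_L$, using ferromagnetism and the compactness assumption $\widetilde{\mathcal A}^\tau_L\subset V$.

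\emph{Step 1: pseudo-orbit and consistency.} Let $\gamma=\gamma_{(x,v)}$, which is calibrated on $\R$ and stays in the compact invariant set $\widetilde{\mathcal A}_L$. Set $y_k:=\gamma(k\tau_i)$. Ferromagnetism (as recalled in \Cref{Appendix_B}) makes the discrete Euler--Lagrange relation
\[
D_vL(y_{k-1},w_{k-1})-D_vL(y_k,w_k)+\tau D_xL(y_k,w_k)=0, \qquad w_k=\frac{y_{k+1}-y_k}{\tau},
\]
define a twist diffeomorphism $\Phi_\tau$ on $\T^d\times\R^d$. This is the relation already used in the proof of \Cref{lem:4.2}. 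A Taylor expansion using $\gamma\in C^2$ and the uniform bounds on $\widetilde{\mathcal A}_L$ shows $\mathrm{dist}\bigl(\Phi_\tau(y_{k-1},w_{k-1}),(y_k,w_k)\bigr)\le C\tau^2$. Thus $(y_k,w_k)_{k\in\Z}$ is a $C\tau^2$-pseudo-orbit of $\Phi_\tau$. Moreover $\Phi_\tau=\phi^L_\tau+O(\tau^2)$ in $C^1$ near $\widetilde{\mathcal A}_L$.

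\emph{Step 2: shadowing.} Hyperbolicity of $\widetilde{\mathcal A}_L$ for the flow persists for the time-$\tau$ map $\phi^L_\tau$ in the partially hyperbolic sense, with a neutral flow direction. Since $\Phi_\tau$ is $O(\tau^2)$-close in $C^1$, the splitting survives for $\Phi_\tau$ on a neighbourhood $U$ of $\widetilde{\mathcal A}_L$. The contraction rates are of order $1-c\tau$ per step, so the effective shadowing constant is $O(\tau^{-1})$. I would invoke the shadowing lemma for hyperbolic sets \cite[Section 18]{Katok}, in its flow version, applied to the $\tau$-step map. This gives a true bi-infinite orbit $(x^\tau_k,v^\tau_k)$ of $\Phi_\tau$ with $\sup_k|(x^\tau_k,v^\tau_k)-(y_k,w_k)|\le C\tau^{-1}\cdot\tau^2=C\tau$, up to a time reparametrization that is also $O(\tau)$. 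Since $(x^\tau_0,v^\tau_0)\to(x,v)$, these are the candidate points.

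\emph{Step 3: the shadowing orbit is in the discrete Aubry set.} This step is the main obstacle, because shadowing yields only an orbit of the discrete dynamics, not a calibrated one. I would first try a uniqueness argument. By \Cref{prop:global_calibrated_discrete_aubry} and \Cref{prop:Mather_in_Aubryscrete}, $\widetilde{\mathcal A}^\tau_L$ is nonempty, $\Phi_\tau$-invariant and consists of globally calibrated orbits. By \Cref{cor:limsup_aubry} and the hypothesis $\widetilde{\mathcal A}^\tau_L\subset V$, for small $\tau$ it lies inside $U$, the neighbourhood where the invariant hyperbolic structure lives. Expansivity of $\Phi_\tau$ on $U$ then implies that orbits staying in $U$ are determined up to the flow direction by their pseudo-orbit class. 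Next I would use calibration. Sum the discrete Lax--Oleinik inequality along the shadowing orbit, and compare with the continuous calibration identity of $\gamma$, using $u_{\tau}\to u$ uniformly (up to constants) and $\bar L(\tau)\to-\alpha(H)$. Ferromagnetism gives the Aubry crossing lemma, so calibrated discrete configurations are ordered and cannot cross. Together these should force the shadowing orbit to be a limit of calibrated backward configurations, hence in $\bigcap_n\Psi^n_{L,\tau}(\widetilde\Sigma^\tau_L)$.

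If this fails, the fallback is a weaker but sufficient route. Take any $(\hat x_\tau,\hat v_\tau)\in\widetilde{\mathcal A}^\tau_L$ near the orbit of $\gamma$; such points exist by \Cref{cor:limsup_aubry}. By the limsup inclusion, the $\Phi_\tau$-orbit of this point stays $o(1)$-close to $\widetilde{\mathcal A}_L$. By shadowing uniqueness, it then tracks the continuous orbit through some point of $\widetilde{\mathcal A}_L$. To reach the prescribed $(x,v)$, I would use the recurrence and local product structure of the hyperbolic set, adjusting the base point along stable and unstable leaves while keeping calibration. This is where the hypotheses must be used most carefully.
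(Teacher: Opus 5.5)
Your Steps 1--2 essentially coincide with the first half of the paper's proof. You sample the calibrated orbit at times $n\tau_i$, obtain a pseudo-orbit of $\Phi_{\tau_i}$ with vanishing error, and shadow it by a true bi-infinite orbit near $\widetilde{\mathcal A}_L$. The paper shadows through local sections and return maps rather than through a flow-shadowing constant of order $\tau^{-1}$, but the output is the same.

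The genuine gap is Step 3, which you flag yourself, and none of the routes you sketch closes it.
\begin{itemize}
\item Even granting expansivity, it only gives uniqueness: \emph{if} a calibrated $\Phi_\tau$-orbit tracks $\gamma$, it is your shadowing orbit up to the flow direction. The existence of such an orbit is exactly what has to be proved.
\item Summing the discrete Lax--Oleinik inequality along the shadowing orbit and comparing with the calibration of $\gamma$ (via $u_\tau\to u$, $\bar L(\tau)\to-\alpha(H)$ and the shadowing distance) can only show that the accumulated defect is small. It never shows that the defect vanishes at a fixed $\tau$, so it cannot place the orbit itself in $\widetilde{\mathcal A}^{\tau}_L$.
\item Aubry's crossing lemma is an order-based one-dimensional tool. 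Here ferromagnetism is only the twist condition that makes $\Phi_\tau$ a well-defined homeomorphism, and on $\T^d$ with $d\ge2$ configurations carry no order.
\item The fallback is circular. \Cref{cor:limsup_aubry} only says that accumulation points of $\widetilde{\mathcal A}^{\tau}_L$ lie in $\widetilde{\mathcal A}_L$. It does not say that $\widetilde{\mathcal A}^{\tau}_L$ comes close to the orbit of the \emph{prescribed} $\gamma$; a priori it could accumulate on only part of $\widetilde{\mathcal A}_L$, and ruling that out is the liminf inclusion itself. Moreover, sliding along stable or unstable leaves has no reason to preserve bi-infinite calibration.
\end{itemize}

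The missing ingredient is a principle of the form ``orbits that are almost calibrated along their whole length are close to the discrete Aubry set''. This is how the paper argues; it never claims that the shadowing orbit is calibrated. It proceeds as follows.
\begin{itemize}
\item It introduces the defect $G_\tau(x,v)=u_\tau(x)+\tau L(x,v)-u_\tau(x+\tau v)-\tau\bar L(\tau)\ge0$ and shows $\eta_i:=\sup_n G_{\tau_i}(\hat x^{(i)}_n,\hat v^{(i)}_n)\to0$.
\item It uses ferromagnetism through the characterization $\widetilde{\mathcal A}^{\tau}_L=\Inv(V\cap K_\tau,\Phi_\tau)$, with $K_\tau=\{G_\tau=0\}$.
\item It then applies \Cref{lem:uniform-nearby-aubry}. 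This compactness statement gives a modulus $\omega$, uniform in $\tau$, such that $d(z_0,\widetilde{\mathcal A}^{\tau}_L)\le\omega(\eta)$ for every bi-infinite $\Phi_\tau$-orbit in $V$ with defect at most $\eta$.
\end{itemize}
This is where the compact set $V$ is really used, and it yields $a_i\in\widetilde{\mathcal A}^{\tau_i}_L$ with $a_i\to(x_0,v_0)$.

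If you adopt this route, be aware of where its weight lies. Since $u_\tau$ is uniformly Lipschitz, $0\le G_\tau\le C_V\tau$ on all of $V$. So $\eta_i\to0$ holds for \emph{every} orbit in $V$, not just the shadowing one. The uniform modulus then implies that all of $\Inv(V,\Phi_\tau)$ lies within $\omega(C_V\tau)$ of $\widetilde{\mathcal A}^{\tau}_L$, which together with the shadowing step is already the liminf statement. A proof of that lemma must therefore cope with two points.
\begin{itemize}
\item As $\tau_i\to0$, $\Phi_\tau\to\mathrm{id}$ and $G_\tau\to0$ uniformly on $V$, so the limiting orbit carries no calibration information.
\item Its last step requires lower rather than upper semicontinuity of $\tau\mapsto\widetilde{\mathcal A}^{\tau}_L$.
\end{itemize}
A sound Step 3 has to measure the defect at the natural scale, for instance $G_\tau/\tau$ or the defect accumulated over time windows of length of order one. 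It must then genuinely establish that uniform principle.
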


\proof
	Let $(x_0, v_0) \in \tilde{\mathcal{A}}_{L}$ be arbitrary. By definition of the Aubry set, there exists a calibrated curve $\gamma_0 : \mathbb{R} \to \mathbb{T}^d$ such that
	$\gamma_0(0) = x_0$,  $\dot{\gamma}_0(0) = v_0$, and, for all $t \geq 0$,
	\[
	u(x_0) - u(\gamma_0(-t)) 
	= \int_{-t}^0 L(\gamma_0(s), \dot{\gamma}_0(s))\,ds + \alpha(H)\, t.
	\]

	Since $\widetilde{\mathcal A}_L$ is a compact hyperbolic invariant set for the
Euler--Lagrange flow $\phi_t^L$, the hyperbolicity is uniform in the directions
transverse to the flow. Hence, after choosing a sufficiently small neighborhood
$U$ of $\widetilde{\mathcal A}_L$, one can cover $\widetilde{\mathcal A}_L$ by a
finite family of smooth local sections
$\{\Sigma_\alpha\}_{\alpha=1}^N$ 
transverse to $\phi_t^L$, and define the associated first-hit map to the union
\[
\Sigma := \bigcup_{\alpha=1}^N \Sigma_\alpha
\]
by
\[
P(x):=\phi_{\tau(x)}^L(x), \qquad
\tau(x):=\inf\Bigl\{t>0:\phi_t^L(x)\in \Sigma\Bigr\},
\]
whenever $\tau(x)$ is well-defined. For each admissible transition
$\Sigma_\alpha \to \Sigma_\beta$, we denote by
\[
D_{\alpha\beta}:=\{x\in \Sigma_\alpha : P(x)\in \Sigma_\beta\}
\]
the corresponding transition domain, and by
\[
P_{\alpha\beta}:=P|_{D_{\alpha\beta}} : D_{\alpha\beta}\longrightarrow \Sigma_\beta
\]
the associated local branch of the return map. By the standard construction of
local sections and return maps for hyperbolic flows, the restriction of the
sectional dynamics to
$\widetilde{\mathcal A}_L\cap \Sigma$
is uniformly hyperbolic in the discrete-time sense; see, for instance,
\cite{Bowen1973,Ratner1973} and \cite[Chapter 3]{Barreira2013}.

For $i$ large enough, the map $\Phi_{\tau_i}$ is $C^1$-close on compact subsets to
the time-$\tau_i$ map $\phi_{\tau_i}^L$. Therefore, after possibly shrinking the
sections, one can associate to $\Phi_{\tau_i}$ a corresponding family of local
sectional maps
\[
P_{\alpha\beta}^{(i)} : D_{\alpha\beta}^{(i)} \longrightarrow \Sigma_\beta
\]
obtained by following the $\Phi_{\tau_i}$-orbit until its first entrance into a
small flow box over $\Sigma_\beta$ and projecting back to $\Sigma_\beta$ along
local flow lines. Moreover, $P_{\alpha\beta}^{(i)}$ converges to $P_{\alpha\beta}$ in $C^1$
on compact subsets of $D_{\alpha\beta}$ as $i\to\infty$. Hence, by the
hyperbolic continuation of the sectional dynamics and the shadowing theorem for
hyperbolic sets, every sufficiently small pseudo-orbit for the induced sectional
maps $P_{\alpha\beta}^{(i)}$ is shadowed by a true orbit of the sectional dynamics
of $\Phi_{\tau_i}$; see \cite[Chapter 9]{Palmer2000}.

Now let
\[
\zeta_n^{(i)}:=\bigl(\gamma_0(n\tau_i),\dot\gamma_0(n\tau_i)\bigr),
\qquad n\in\mathbb Z,
\]
be the sampling of the calibrated orbit through $(x_0,v_0)$. Since $\gamma_0$ is a
true orbit of the Euler--Lagrange flow and $\Phi_{\tau_i}$ is a $C^1$-consistent
approximation of $\phi_{\tau_i}^L$, the sequence
$\{\zeta_n^{(i)}\}_{n\in\mathbb Z}$ is a pseudo-orbit for $\Phi_{\tau_i}$, namely
\[
\delta_i:=\sup_{n\in\mathbb Z}
d\!\left(\Phi_{\tau_i}\bigl(\zeta_n^{(i)}\bigr),\zeta_{n+1}^{(i)}\right)
\longrightarrow 0
\qquad\text{as } i\to\infty.
\]
Projecting this pseudo-orbit to the chosen family of sections, one obtains a
sectional pseudo-orbit for the induced maps $P_{\alpha\beta}^{(i)}$ with error
still tending to zero. By the shadowing property, there exists a true orbit
\[
\{(\hat x_n^{(i)},\hat v_n^{(i)})\}_{n\in\mathbb Z}
\]
of $\Phi_{\tau_i}$, entirely contained in $U$, and a sequence
$\varepsilon_i\to 0$ such that
\[
\sup_{n\in\mathbb Z}
d\!\left((\hat x_n^{(i)},\hat v_n^{(i)}),\zeta_n^{(i)}\right)
\le \varepsilon_i .
\]
In particular,
$(\hat x_0^{(i)},\hat v_0^{(i)})\longrightarrow (x_0,v_0)$ as $i\to\infty.$

	Define the discrete calibration defect by
\[
G_\tau(x,v):=
u_\tau(x)+\tau L(x,v)-u_\tau(x+\tau v)-\tau\bar L(\tau)\ge 0.
\]
By construction of the sampled orbit
\[
\zeta_n^{(i)}:=\bigl(\gamma_0(n\tau_i),\dot\gamma_0(n\tau_i)\bigr),
\qquad n\in\mathbb Z,
\]
 the sequence $\{\zeta_n^{(i)}\}_{n\in\mathbb Z}$ is
not only a pseudo-orbit for $\Phi_{\tau_i}$, but also an almost calibrated one, in the
sense that
\[
\sup_{n\in\mathbb Z} G_{\tau_i}\bigl(\zeta_n^{(i)}\bigr)\longrightarrow 0
\qquad\text{as } i\to\infty.
\]
Indeed, for each $\tau_i > 0$, define a discrete sequence $\{x_n^{\tau_i}\}_{n \leq 0}$ by
	\[
	x_n^{\tau_i} := \gamma_0(n\tau_i), \qquad n \leq 0,
	\]
	and the corresponding discrete velocities
	\[
	v_n^{\tau_i} := \frac{x_{n+1}^{\tau_i} - x_n^{\tau_i}}{\tau_i}
	= \frac{\gamma_0((n+1)\tau_i) - \gamma_0(n\tau_i)}{\tau_i}.
	\]
	Since $\gamma_0$ is of class $\mathcal{C}^2$, a Taylor expansion yields $v_n^{\tau_i} = \dot{\gamma}_0(n\tau_i) + O(\tau_i)$.
	Consider the discrete action along this sequence. By the smoothness of $L$, we have $L\big(\gamma_0(k\tau_i), v_k^{\tau_i}\big) = L(\gamma_0(k\tau_i), \dot{\gamma}_0(k\tau_i)) + O(\tau_i)$, yielding:
	\begin{align*}
		\sum_{k=n}^{-1} \Big[\mathcal{L}_{\tau_i}(x_k^{\tau_i}, x_{k+1}^{\tau_i}) + \tau_i \alpha(H)\Big]
		&= \sum_{k=n}^{-1} \tau_i \Big[L\big(x_k^{\tau_i}, v_k^{\tau_i}\big) + \alpha(H)\Big] \\
		&= \int_{n\tau_i}^0 L(\gamma_0(s), \dot{\gamma}_0(s))\,ds 
		+ \alpha(H)|n|\tau_i + O(\tau_i).
	\end{align*}
	By the calibration property of $\gamma_0$, the integral equals $u(x_0) - u(\gamma_0(n\tau_i))$. Recalling that $\bar{L}(\tau_i) \to -\alpha(H)$, we obtain
	\begin{equation*}\label{eq:tau-calibrated}
		u_{\tau_i}(x_0) - u_{\tau_i}(x_n^{\tau_i})
		= \sum_{k=n}^{-1} \Big[\mathcal{L}_{\tau_i}(x_k^{\tau_i}, x_{k+1}^{\tau_i}) 
		- \tau_i \bar{L}(\tau_i)\Big] + O(\tau_i).
	\end{equation*}

Since $(\hat x_n^{(i)},\hat v_n^{(i)})$ $\varepsilon_i$-shadows
$\zeta_n^{(i)}$ and $\varepsilon_i\to0$, the continuity of $u_{\tau_i}$ and of $L$
on compact sets yields
\[
\eta_i:=
\sup_{n\in\mathbb Z} G_{\tau_i}\bigl(\hat x_n^{(i)},\hat v_n^{(i)}\bigr)
\longrightarrow 0
\qquad\text{as } i\to\infty.
\]
Moreover, by construction, the orbit 
$\{(\hat x_n^{(i)},\hat v_n^{(i)})\}_{n\in\mathbb Z}$
remains in a fixed compact neighborhood $V\Subset U$ of
$\widetilde{\mathcal A}_L$ for all $i$ sufficiently large.

Under the ferromagnetic assumption, the discrete Aubry set can be characterized as
the maximal $\Phi_\tau$-invariant subset of the zero-defect set (see \cite[Corollary 9.4]{Garibaldi})
\[
K_\tau:=\{(x,v):G_\tau(x,v)=0\}.
\]
In particular, $\widetilde{\mathcal A}_L^\tau=\Inv(K_\tau,\Phi_\tau).$
Let $V$ be an isolating neighborhood for $\widetilde{\mathcal A}_L^\tau$ inside
$K_\tau$, hence
\[
\widetilde{\mathcal A}_L^\tau=\Inv(V\cap K_\tau,\Phi_\tau).
\]
We can  apply \Cref{lem:uniform-nearby-aubry} with $z_n=(\hat x_n^{(i)},\hat v_n^{(i)})$. We obtain
a point $a_i\in \widetilde{\mathcal A}_L^{\tau_i}$ such that
\[
d\bigl(a_i,(\hat x_0^{(i)},\hat v_0^{(i)})\bigr)\le \omega(\eta_i),
\]
where $\omega(\eta_i)\to0$ as $i\to\infty$. Since
$(\hat x_0^{(i)},\hat v_0^{(i)})\rightarrow (x_0,v_0)$,
it follows that $a_i\rightarrow (x_0,v_0)$
 as   $i\to\infty$.
This proves that
\[
(x_0,v_0)\in \liminf_{i\to\infty}\widetilde{\mathcal A}_L^{\tau_i}.
\]
Since $(x_0,v_0)\in \widetilde{\mathcal A}_L$ was arbitrary, we conclude that
\begin{equation*}
\widetilde{\mathcal A}_L\subset \liminf_{i\to\infty}\widetilde{\mathcal A}_L^{\tau_i}.  \eqno\square
\end{equation*}

\begin{lemma}
\label{lem:uniform-nearby-aubry}
Assume that $L$ is ferromagnetic. For each $0<\tau<\tau_0$, let $\Phi_\tau$ be the
associated discrete Euler--Lagrange map and define
\begin{align*}
G_\tau(x,v) &:=
u_\tau(x)+\tau L(x,v)-u_\tau(x+\tau v)-\tau\bar L(\tau),\\
K_\tau & :=\{(x,v)\in \mathbb T^d\times\mathbb R^d:\ G_\tau(x,v)=0\}.
\end{align*}
Let $V\subset \mathbb T^d\times\mathbb R^d$ be compact and assume that
$\widetilde{\mathcal A}_L^\tau=\Inv(V\cap K_\tau,\Phi_\tau)$
 for   $0<\tau<\tau_0$. For $\eta\ge0$, define
\[
K_\tau(\eta):=
\Bigl\{z=(x,v)\in V:\ \exists \{z_n\}_{n\in\mathbb Z}\subset V
\text{ bi-infinite orbit of }\Phi_\tau,\ z_0=z,\ 
\sup_{n\in\mathbb Z} G_\tau(z_n)\le \eta\Bigr\}.
\]
 Then there exists a modulus of continuity
\[
\omega:[0,+\infty)\to[0,+\infty),
\qquad
\omega(\eta)\to0 \ \text{ as }\eta\to0^+,
\]
independent of $\tau$, such that for every $0<\tau<\tau_0$ and every bi-infinite orbit
$\{z_n\}_{n\in\mathbb Z}$ of $\Phi_\tau$ contained in $V$ and satisfying
$\sup_{n\in\mathbb Z}G_\tau(z_n)\le \eta,$ one has
\[
d(z_0,\widetilde{\mathcal A}_L^\tau)\le \omega(\eta).
\]
\end{lemma}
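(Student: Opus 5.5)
The plan is a compactness-and-contradiction argument that produces $\omega$ as the modulus of upper semicontinuity of $\eta\mapsto K_\tau(\eta)$ at $\eta=0$. Define
\[
\omega(\eta):=\sup_{0<\tau<\tau_0}\ \sup_{z\in K_\tau(\eta)} d\bigl(z,\widetilde{\mathcal A}_L^\tau\bigr).
\]
By construction $\omega$ is nondecreasing, independent of $\tau$, and bounded by $\operatorname{diam}V<\infty$. By definition of $K_\tau(\eta)$, every orbit as in the statement has $z_0\in K_\tau(\eta)$, so $d(z_0,\widetilde{\mathcal A}_L^\tau)\le\omega(\eta)$. The whole content is therefore to show $\omega(\eta)\to0$ as $\eta\to0^+$.

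\emph{Fixed $\tau$.} Argue by contradiction: suppose there are $\delta>0$, $\eta_j\to0$, and orbits $\{z^j_n\}\subset V$ with $\sup_n G_\tau(z^j_n)\le\eta_j$ and $d(z^j_0,\widetilde{\mathcal A}_L^\tau)\ge\delta$. Since $V$ is compact, a diagonal extraction gives $z^j_n\to z_n$ for every $n\in\mathbb Z$. The map $\Phi_\tau$ is continuous (it is the discrete Euler--Lagrange map, well defined under the ferromagnetic twist assumption recalled in \Cref{Appendix_B}), and $G_\tau$ is continuous because $u_\tau$ and $L$ are. Hence $\{z_n\}$ is a bi-infinite $\Phi_\tau$-orbit in $V$ with $G_\tau(z_n)=0$ for all $n$. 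It therefore lies in $\Inv(V\cap K_\tau,\Phi_\tau)=\widetilde{\mathcal A}_L^\tau$, while $d(z_0,\widetilde{\mathcal A}_L^\tau)\ge\delta$, which is a contradiction.

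\emph{Uniformity in $\tau$ (main obstacle).} Run the same contradiction with $\tau_j$ varying as well. If $\tau_j\to\bar\tau\in(0,\tau_0)$, the step above goes through, provided $u_{\tau}$ and $\bar L(\tau)$ depend continuously on $\tau$ and the sets $\widetilde{\mathcal A}_L^{\tau}$ vary upper semicontinuously. I would obtain the first from the uniform Lipschitz bound of \cite[Proposition 4]{Iturriaga-Wang} together with Ascoli, after normalizing $u_\tau$, which may require a selection of $u_\tau$. Lower semicontinuity of $\tau\mapsto\widetilde{\mathcal A}_L^\tau$ is genuinely needed here, so I expect this to be where an extra argument or assumption must enter.

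If instead $\tau_j\to0$, the orbits no longer converge in $V$ as discrete orbits. The plan is then to interpolate them as in \Cref{lem:4.2} and pass to a limit curve. The limit would be a bi-infinite Euler--Lagrange orbit in $V$ which is calibrated, since the summed defects $\sum G$ over $O(1/\tau_j)$ steps must be controlled. This last point is the delicate part: a per-step bound $\eta_j$ is too weak, and I would try to replace it by a bound on the sum or restrict to $\eta$ of order $o(\tau)$. The limit orbit then lies in $\widetilde{\mathcal A}_L$, and combining with $\widetilde{\mathcal A}_L^{\tau_j}$ approaching $\widetilde{\mathcal A}_L$ would give the contradiction. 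Since that approach is exactly what \Cref{liminf_Aubry} is trying to prove, this branch is circular. If it cannot be closed, I would settle for a $\tau$-dependent modulus $\omega_\tau$, which the fixed-$\tau$ argument proves rigorously, and flag the uniformity as the critical gap.
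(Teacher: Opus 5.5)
Your fixed-$\tau$ argument is correct and matches the first half of the paper's proof. The paper phrases it as $\bigcap_m K_\tau(1/m)=K_\tau(0)=\widetilde{\mathcal A}_L^\tau$ plus monotonicity of the compact sets $K_\tau(\eta)$, and your $\omega$ is the paper's $\omega$. What you do not obtain is the $\tau$-independence of the modulus, so as a proof of the lemma as stated the proposal is incomplete. The paper does not close this gap either.

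Its uniformity step is exactly the compactness argument you considered. It takes $\tau_i\to\tau_*\in[0,\tau_0]$, asserts joint continuity of $(\tau,z)\mapsto\Phi_\tau(z)$ and $(\tau,z)\mapsto G_\tau(z)$, and deduces $z_0^*\in\widetilde{\mathcal A}_L^{\tau_*}$. It then concludes $d(z^{(i)},\widetilde{\mathcal A}_L^{\tau_i})\to0$ from an ``upper semicontinuity'' attributed to uniform isolation. All three of your objections apply:
\begin{itemize}
\item Continuity of $\tau\mapsto u_\tau$ (which is not even unique) and of $\tau\mapsto\bar L(\tau)$ is never established.
\item If $\tau_*=0$, then $\Phi_0=\mathrm{id}$ and $G_0\equiv0$, so the limit is a constant sequence carrying no information, and $\widetilde{\mathcal A}_L^{0}$ is undefined.
\item The final step needs $z_0^*\in\liminf_i\widetilde{\mathcal A}_L^{\tau_i}$, i.e.\ lower semicontinuity. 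That is the opposite of what isolation gives and, for $\tau_*=0$, is essentially the conclusion of \Cref{liminf_Aubry}.
\end{itemize}

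Your remark that a per-step bound is too weak is in fact decisive. Let $K$ be the uniform Lipschitz constant of $u_\tau$. Then $0\le G_\tau\le C_V\tau$ on $V$, so $K_\tau(\eta)=\Inv(V,\Phi_\tau)$ as soon as $\tau\le\eta/C_V$. A $\tau$-independent $\omega$ would therefore force every $\Phi_\tau$-orbit in $V$ to lie near $\widetilde{\mathcal A}_L^\tau$ for small $\tau$, and this fails in general.

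Take $L(x,v)=\tfrac12v^2-\cos(2\pi x)$ on $\mathbb T$, which is ferromagnetic since $\partial^2_{xv}L=0$. Here $\mathcal L_\tau\ge-\tau$, with equality at $(0,0)$, and $\delta_{(0,0)}$ is discrete holonomic, so $\bar L(\tau)=-1$. Let $V=\mathbb T\times[-R,R]$ with $R\ge D$ from \Cref{lem:bounded_velocitites}. Using, as the paper does, $\widetilde{\mathcal A}_L^\tau=\Inv(K_\tau,\Phi_\tau)$, the hypothesis $\widetilde{\mathcal A}_L^\tau=\Inv(V\cap K_\tau,\Phi_\tau)$ holds.
\begin{itemize}
\item The point $(1/2,0)$ is a fixed point of $\Phi_\tau$ with $G_\tau(1/2,0)=2\tau$.
\item $G_\tau(x,v)\ge\tau(1-\cos 2\pi x-K|v|)>0$ on a $\tau$-independent neighbourhood of $(1/2,0)$, so that neighbourhood misses $K_\tau\supset\widetilde{\mathcal A}_L^\tau$.
\item Hence $d((1/2,0),\widetilde{\mathcal A}_L^\tau)\ge c>0$ for all small $\tau$, and $\omega(\eta)\ge c$ for every $\eta>0$.
\end{itemize}

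So the lemma cannot be proved in the stated generality. What is provable is your $\tau$-dependent modulus $\omega_\tau$. Any uniform version needs a reformulation, e.g.\ a defect normalized by $\tau$ or restrictions on $V$. It also needs an independent lower-semicontinuity input, which, as you suspected, is circular with \Cref{liminf_Aubry}.
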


\begin{proof}
Fix $\tau\in(0,\tau_0)$. For each $\eta\ge0$, the set $K_\tau(\eta)$ is compact and the family $\{K_\tau(\eta)\}_{\eta\ge0}$ is clearly
monotone increasing in $\eta$. We claim that
\[
\bigcap_{m\ge1}K_\tau(1/m)=K_\tau(0).
\]
The inclusion $K_\tau(0)\subset \displaystyle{\bigcap_{m\ge1}}K_\tau(1/m)$ is immediate. Conversely,
let $z_0\in \bigcap_{m\ge1}K_\tau(1/m)$. For each $m\ge1$ there exists a bi-infinite
orbit 
$\{z_n^{(m)}\}_{n\in\mathbb Z}\subset V$
of $\Phi_\tau$ with $z_0^{(m)}=z_0$ and
\[
\sup_{n\in\mathbb Z}G_\tau(z_n^{(m)})\le \frac1m.
\]
By diagonal extraction, after passing to a subsequence we obtain a bi-infinite
orbit $\{z_n^\ast\}_{n\in\mathbb Z}\subset V$ such that
$z_n^{(m)}\to z_n^\ast$
 for every fixed   $n\in\mathbb Z$. Then
\[
G_\tau(z_n^\ast)=0
\qquad\forall n\in\mathbb Z.
\]
Writing $z_n^\ast=(x_n^\ast,v_n^\ast)$, the identity $z_{n+1}^\ast=\Phi_\tau(z_n^\ast)$
gives
$
x_{n+1}^\ast=x_n^\ast+\tau v_n^\ast,
$
while $G_\tau(z_n^\ast)=0$ yields
\[
u_\tau(x_{n+1}^\ast)-u_\tau(x_n^\ast)
=
L_\tau(x_n^\ast,x_{n+1}^\ast)-\tau\bar L(\tau)
\qquad\forall n\in\mathbb Z.
\]
Summing from $n=m$ to $n=\ell-1$, we obtain
\[
u_\tau(x_\ell^\ast)-u_\tau(x_m^\ast)
=
\sum_{j=m}^{\ell-1}L_\tau(x_j^\ast,x_{j+1}^\ast)-(\ell-m)\tau\bar L(\tau)
\qquad\forall m<\ell.
\]
Hence $\{x_n^\ast\}_{n\in\mathbb Z}$ is a globally calibrated bi-infinite
configuration, and by the characterization of the discrete Aubry set (see \Cref{prop:global_calibrated_discrete_aubry}) one has
$z_0=z_0^\ast\in \widetilde{\mathcal A}_L^\tau\cap V. $
Since, by assumption,
\[
\widetilde{\mathcal A}_L^\tau=\Inv(V\cap K_\tau,\Phi_\tau),
\]
it follows that $K_\tau(0)=\widetilde{\mathcal A}_L^\tau$. Therefore
\[
\bigcap_{m\ge1}K_\tau(1/m)=K_\tau(0)=\widetilde{\mathcal A}_L^\tau.
\]
For fixed $\tau$, this implies
\[
\sup_{z\in K_\tau(\eta)}d(z,\widetilde{\mathcal A}_L^\tau)\to0
\qquad\text{as }\eta\to0^+.
\]
We now prove that this convergence is uniform in $\tau\in(0,\tau_0)$. Assume by contradiction that no uniform modulus exists. Then there are
$\varepsilon_0>0$, sequences $\tau_i\in(0,\tau_0)$, $\eta_i\to0^+$, and points
$z^{(i)}\in K_{\tau_i}(\eta_i)$
such that
\[
d\bigl(z^{(i)},\widetilde{\mathcal A}_L^{\tau_i}\bigr)\ge \varepsilon_0
\qquad\forall i.
\]
By definition of $K_{\tau_i}(\eta_i)$, for each $i$ there exists a bi-infinite orbit
$\{z_n^{(i)}\}_{n\in\mathbb Z}\subset V$
of $\Phi_{\tau_i}$ with $z_0^{(i)}=z^{(i)}$ and
\[
\sup_{n\in\mathbb Z}G_{\tau_i}(z_n^{(i)})\le \eta_i.
\]
Since $V$ is compact and $\tau_i\in(0,\tau_0)$, after passing to a subsequence we may
assume that $\tau_i\to\tau_\ast\in[0,\tau_0]$, and by a diagonal extraction on finite
blocks that
\[
z_n^{(i)}\to z_n^\ast\in V
\qquad\text{for every fixed } n\in\mathbb Z.
\]
By continuity of $(\tau,z)\mapsto \Phi_\tau(z)$ and $(\tau,z)\mapsto G_\tau(z)$,
\[
z_{n+1}^\ast=\Phi_{\tau_\ast}(z_n^\ast),
\qquad
G_{\tau_\ast}(z_n^\ast)=0
\qquad\forall n\in\mathbb Z.
\]
Hence $\{z_n^\ast\}_{n\in\mathbb Z}$ is a bi-infinite zero-defect orbit in $V$, and
therefore
\[
z_0^\ast\in \Inv(V\cap K_{\tau_\ast},\Phi_{\tau_\ast})
=\widetilde{\mathcal A}_L^{\tau_\ast}.
\]
Since $V$ is a uniformly isolating neighborhood for the family
$\{\widetilde{\mathcal A}_L^\tau\}_{0<\tau<\tau_0}$, the family is upper
semicontinuous in $V$, and thus
\[
d\bigl(z^{(i)},\widetilde{\mathcal A}_L^{\tau_i}\bigr)\to0,
\]
contradicting the choice of $z^{(i)}$. This proves that the convergence
\[
\sup_{z\in K_\tau(\eta)}d(z,\widetilde{\mathcal A}_L^\tau)\to0
\qquad\text{as }\eta\to0^+
\]
is uniform in $\tau\in(0,\tau_0)$. We may therefore define
\[
\omega(\eta):=
\sup_{0<\tau<\tau_0}\sup_{z\in K_\tau(\eta)}
d\bigl(z,\widetilde{\mathcal A}_L^\tau\bigr).
\]
By the previous argument, $\omega(\eta)\to0$ as $\eta\to0^+$. Finally, if
$\{z_n\}_{n\in\mathbb Z}\subset V$ is a bi-infinite orbit of $\Phi_\tau$ satisfying
$\sup_{n\in\mathbb Z}G_\tau(z_n)\le \eta,$
then by definition $z\in K_\tau(\eta)$, hence
\[
d(z_0,\widetilde{\mathcal A}_L^\tau)\le \omega(\eta).
\]
This proves the lemma.
\end{proof}


\subsubsection*{Acknowledgments}

The authors were partially supported by Istituto Nazionale di Alta Matematica INdAM-GNAMPA.

\appendix
\section*{Appendix}
\section{Selection of a prescribed Mather measure}
We   prove \Cref{lem:viscosity_selection}. We begin with a preliminary lemma. 
\begin{lemma}\label{lem:recovery}
	Let $\mu \in \widetilde{\mathcal M}_L$. Then there exists a family $\{\eta_\tau\}_{\tau>0} \subset \mathcal H_\tau$ such that, defining $\widetilde{\eta}_\tau:=\Theta^{-1}_{\tau\#}\eta_\tau$, 
	\begin{equation}\label{eq:recovery_eta}
		\widetilde{\eta}_\tau \rightharpoonup \mu \quad \text{in } \mathcal{P}(\T^d \times \R^d),
		\qquad
		\frac{1}{\tau}\mathfrak{A}_\tau(\eta_\tau) = \alpha(H) + o(1)
		\quad \text{as } \tau \to 0.
	\end{equation}
	The family $\{\eta_\tau\}$ is called a recovery sequence for $\mu$.
\end{lemma}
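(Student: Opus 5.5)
The plan is to build the recovery sequence by pushing $\mu$ forward along the Euler--Lagrange flow for time $\tau$. This uses only two facts: every Mather measure is invariant under $\phi^L_t$ (Mather's theorem, see \cite{Mather1991}), and $\operatorname{spt}(\mu)\subset\widetilde{\mathcal M}_L$ is compact. Throughout I read the claimed limit of $\frac1\tau\mathfrak A_\tau(\eta_\tau)$ as the minimal continuous action $\int L\,d\mu$, which is the critical value. With the sign normalisation of the paper this equals $\lim_{\tau\to0}\bar L(\tau)$. The proof should only produce $\frac1\tau\mathfrak A_\tau(\eta_\tau)=\int L\,d\mu+o(1)$ and then invoke this identification.

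\emph{Construction.} Fix $R>0$ with $\operatorname{spt}(\mu)\subset\T^d\times \overline{B_R}$. For $(x,v)\in\T^d\times\R^d$, let $\bar\gamma_{(x,v)}$ denote the lift to $\R^d$ of $\gamma_{(x,v)}$ starting at a lift of $x$. Define
\[
T_\tau(x,v):=\Bigl(x,\ \frac{\bar\gamma_{(x,v)}(\tau)-\bar\gamma_{(x,v)}(0)}{\tau}\Bigr),
\qquad
\widetilde\eta_\tau:=(T_\tau)_\sharp\mu,
\qquad
\eta_\tau:=(\Theta_\tau)_\sharp\widetilde\eta_\tau .
\]
Then $\eta_\tau$ is the law of $(x,\gamma_{(x,v)}(\tau))$ under $\mu$. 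Its first marginal is $\pi_\sharp\mu$. Its second marginal is $\pi_\sharp(\phi^L_\tau)_\sharp\mu=\pi_\sharp\mu$ by invariance. The two marginals coincide, which is exactly the discrete holonomy of \Cref{discrete_Mather}.

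\emph{Consistency estimates.} By compactness of $\operatorname{spt}(\mu)$ and the $C^2$ regularity of the flow, there is $C$ such that $|\dot\gamma_{(x,v)}(s)|\le C$ and $|\ddot\gamma_{(x,v)}(s)|\le C$ for $s\in[0,1]$ and $(x,v)\in\operatorname{spt}(\mu)$. Taylor's formula then gives
\[
\Bigl|\frac{\bar\gamma_{(x,v)}(\tau)-\bar\gamma_{(x,v)}(0)}{\tau}-v\Bigr|\le C\tau
\]
uniformly on $\operatorname{spt}(\mu)$. Hence $T_\tau\to\mathrm{id}$ uniformly on $\operatorname{spt}(\mu)$, so $\int\phi\,d\widetilde\eta_\tau=\int\phi\circ T_\tau\,d\mu\to\int\phi\,d\mu$ for every $\phi\in C_b$, that is, $\widetilde\eta_\tau\rightharpoonup\mu$. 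Moreover, the velocities in $\operatorname{spt}(\widetilde\eta_\tau)$ stay in $\overline{B_{R+C}}$, where $L$ is uniformly continuous. Therefore
\[
\frac1\tau\mathfrak A_\tau(\eta_\tau)=\int L\circ T_\tau\,d\mu=\int L\,d\mu+O(\tau).
\]
This gives \eqref{eq:recovery_eta} once $\int L\,d\mu$ is identified with the critical value.

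\emph{Main obstacle.} The delicate point is the meaning of $\mathcal L_\tau(x,y)=\tau L(x,(y-x)/\tau)$ on the torus. The quotient $(y-x)/\tau$ depends on the choice of lift, whereas the natural displacement here is the lifted one. I would handle this by working in the lifted variables of \Cref{lifted_measure}. Since all displacements are bounded by $(R+C)\tau<\tfrac12$ for small $\tau$, the shortest-lift representative of $y-x$ coincides with $\bar\gamma(\tau)-\bar\gamma(0)$. So $\Theta_\tau^{-1}\circ\Theta_\tau=\mathrm{id}$ on the relevant set, and the pushforwards are consistent. A secondary check is that the constant matches the paper's normalisation: from \eqref{last_1} and \Cref{prop:mather_limsup}, the minimal discrete action divided by $\tau$ is $\bar L(\tau)$, and its limit equals $\int L\,d\mu$ for any Mather measure $\mu$. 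I would state this identification explicitly so that the "$\alpha(H)$" in \eqref{eq:recovery_eta} is read with the sign convention used in \Cref{sec:discrete}.
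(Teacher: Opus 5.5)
Your proposal is correct and is essentially the paper's argument. The paper defines $\eta_\tau$ as the law of $\bigl(\pi(\phi^s_L z),\pi(\phi^{s+\tau}_L z)\bigr)$ under $\mu$, averaged over $s\in[0,\tau]$; by invariance of $\mu$ this is exactly your $(\Theta_\tau\circ T_\tau)_\sharp\mu$, and the paper also gets holonomy from invariance. Your uniform $O(\tau)$ Taylor estimate on the compact support replaces the paper's pointwise convergence plus uniform integrability, and this, your handling of the lift, and your reading of the limit as $\int L\,d\mu=\lim_{\tau\to0}\bar L(\tau)=-\alpha(H)$ are all sound (the paper's ``$\alpha(H)$'' here is a sign slip relative to its own normalisation $\bar L(\tau)\to-\alpha(H)$).
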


\begin{proof}
	Let $\phi^t_L$ denote the Euler--Lagrange flow on $\T^d\times\R^d$ and $\pi$ the projection onto $\T^d$. 
	Using that $\mu$ is invariant under $\phi^t_L$, we define a probability measure $\eta_\tau$ on $\T^d\times\T^d$ by
	\[
	\int_{\T^d\times\T^d}\psi(x,y)\,d\eta_\tau(x,y)
	:=\frac1{\tau}\int_0^\tau \int_{\T^d\times\R^d}
	\psi\!\bigl(\pi(\phi^s_L(z)),\,\pi(\phi^{s+\tau}_L(z))\bigr)\,d\mu(z)\,ds,
	\qquad \forall\,\psi\in C(\T^d\times\T^d).
	\]
	For every $\varphi\in C(\T^d)$, a telescoping argument yields
	\[
	\int_{\T^d \times \T^d} (\varphi(y)-\varphi(x))\,d\eta_\tau(dxdy)
	=\frac1{\tau}\int_0^\tau \Bigl(\int \varphi(\pi(\phi^{s+\tau}_L(z)))\,d\mu(z)-\int \varphi(\pi(\phi^{s}_L(z)))\,d\mu(z)\Bigr)\,ds=0,
	\]
	hence $\eta_\tau\in\mathcal H_\tau$.
	
	Define $\widetilde\eta_\tau:=\Theta^{-1}_{\tau\#}\eta_\tau\in\mathcal P(\T^d\times\R^d)$. Then
	\[
	\mathfrak{A}_\tau(\eta_\tau)=\tau\int_{\T^d\times\R^d} L(x, v)\,d\widetilde\eta_\tau(x,v).
	\]
	By construction, $\widetilde\eta_\tau$ is obtained by averaging along short pieces of trajectories of the flow. 
	More precisely, for $\mu$-a.e.\ $z=(x,w)$ and a.e.\ $s$, the difference quotient
	\[
	\frac{\pi(\phi^{s+\tau}_L(z))-\pi(\phi^{s}_L(z))}{\tau}
	\]
	converges to the velocity component of the orbit, namely $\dot\gamma_z(s)$, where $\gamma_z(t):=\pi(\phi^t_L(z))$. 
	Using this pointwise convergence together with the uniform integrability of the velocities (which follows from the coercivity of $L$ and the fact that $\mu$ has finite action), one obtains
	\[
	\widetilde{\eta}_\tau \rightharpoonup \mu
	\quad \text{in } \mathcal{P}(\T^d\times\R^d).
	\]

	By the continuity and superlinearity of $L$, and using again the convergence of the difference quotients together with uniform integrability, we can apply the dominated convergence theorem to deduce
	\[
	\int_{\T^d\times\R^d} L(x, v)\,d\widetilde\eta_\tau(x,v)
	\longrightarrow 
	\int_{\T^d\times\R^d} L(x, v)\,d\mu(x,v)
	= \alpha(H).
	\]
	Therefore,
	\[
	\frac{1}{\tau}\mathfrak{A}_\tau(\eta_\tau)
	= \int_{\T^d\times\R^d} L(x, v)\,d\widetilde\eta_\tau(x,v)
	= \alpha(H) + o(1).
	\]
\end{proof}
We now prove \Cref{lem:viscosity_selection}.
\begin{proof}[Proof of \Cref{lem:viscosity_selection}]
	Fix $\varepsilon>0$ and let $\nu_{\tau,\varepsilon}\in\arg\min_{\mathcal H_\tau} F_{\tau,\varepsilon}$. 
	From the definition of the recovery sequence $\eta_\tau$ for $\mu$, we have
	\[
	F_{\tau,\varepsilon}(\nu_{\tau,\varepsilon}) \le F_{\tau,\varepsilon}(\eta_\tau)
	\]
	which implies that 
	\[
	\frac{1}{\tau}\mathfrak{A}_\tau(\nu_{\tau,\varepsilon})
	\le \alpha(H)+o(1).
	\]
	Since $L$ is Tonelli, for every $K>0$ there exists $C(K)\in\R$ such that
	$L(x,v)\ge K|v|-C(K)$ for all $(x,v)$.
	Using the relation between $\mathfrak{A}_\tau(\nu)$ and $\int L\,d\widetilde{\nu}$, the fact that \begin{equation*}
\sup_{\tau\in(0,1)}\frac{1}{\tau}\mathcal{A}_\tau(\nu_{\tau,\varepsilon})<\infty,
\end{equation*}
	yields a uniform bound on $\int L\,d\widetilde{\nu}_{\tau,\varepsilon}$. Therefore
	\[
	\sup_{\tau\in(0,1)} \int_{\T^d\times\R^d}|v|\,d\widetilde{\nu}_{\tau,\varepsilon}(x,v)
	\le \frac{1}{K}\sup_{\tau\in(0,1)}\int L\,d\widetilde{\nu}_{\tau,\varepsilon}+\frac{C(K)}{K}<\infty .
	\]
	Because $\T^d$ is compact, this uniform first-moment bound implies tightness of $\{\widetilde{\nu}_{\tau,\varepsilon}\}$ in $\mathcal{P}(\T^d\times\R^d)$.
		
	Let $\bar\mu$ be any weak limit point as $\tau\to0$. Passing to the limit in the discrete holonomy condition shows that $\bar\mu$ is closed, and by lower semicontinuity,
	\[
	\int L\,d\bar\mu + \varepsilon \int \psi\,d\bar\mu
	\le \alpha(H) + \varepsilon \int \psi\,d\mu.
	\]
	Since $\bar\mu$ is closed, we have 
	\[
	\int L\,d\bar\mu \ge \alpha(H),
	\]
	 and therefore
	\[
	\int \psi\,d\bar\mu \le \int \psi\,d\mu.
	\]
	Letting $\varepsilon\to0$, any limit point of $\{\widetilde{\nu}_{\tau,\varepsilon(\tau)}\}$ is a Mather measure minimizing $\int\psi\,d\rho$. By \eqref{selection_ergodic}, this implies $\bar\mu=\mu$.
	Thus the whole family converges:
	\[
	\widetilde{\nu}_{\tau,\varepsilon(\tau)} \rightharpoonup \mu \quad \text{as } \tau\to0.
	\]
	
	Finally, if $z\in\supp(\mu)$, then for every neighborhood $U$ of $z$ one has $\widetilde{\nu}_{\tau,\varepsilon(\tau)}(U)>0$ for $\tau$ small, hence there exist $z_\tau\in\supp(\widetilde{\nu}_{\tau,\varepsilon(\tau)})$ with $z_\tau\to z$.
\end{proof}


	\section{Ferromagnetic Lagrangian and Shadowing Lemma}\label{Appendix_B}
	
In the following we also need that the discrete Euler-Lagrange flow is a smooth perturbation of the continuous flow. To this end, we introduce the notion of ferromagnetic Lagrangian (see \cite[Definition 2.4 and Definition 2.5]{Garibaldi}), which provides the appropriate framework for establishing this perturbative relation.

\begin{definition}
We say that $L(x,v)$ is \emph{ferromagnetic} if, for any sufficiently small $\tau>0$,
the two maps in {\rm (I)} (or equivalently in {\rm (II)}) 
\begin{equation}\tag{I}
\begin{cases}
\mathbb{R}^d \to \mathbb{R}^d, & x \mapsto \partial_y \LL_\tau(x,y),\\[2mm]
\mathbb{R}^d \to \mathbb{R}^d, & y \mapsto \partial_x \LL_\tau(x,y),
\end{cases}
\end{equation}
and
\begin{equation}\tag{II}
\begin{cases}
\mathbb{R}^d \to \mathbb{R}^d, & v \mapsto \partial_v L(y-\tau v, v),\\[2mm]
\mathbb{R}^d \to \mathbb{R}^d, & v \mapsto \tau\,\partial_x L(x,v)-\partial_v L(x,v),
\end{cases}
\end{equation}
are homeomorphisms for all $(x,y)$.
\end{definition}

\begin{definition}\label{discrete_flow_app}
Let $L(x,v)$ be a  ferromagnetic Lagrangian. For sufficiently small $\tau>0$,
the \emph{discrete Euler--Lagrange map} (or \emph{standard map}) is the map
\begin{equation}\label{discrete_Euler_lagrange map}
\Phi_\tau:\mathbb{T}^d\times\mathbb{R}^d \longrightarrow \mathbb{T}^d\times\mathbb{R}^d,
\qquad (x,v)\longmapsto (y,w),
\end{equation}
where $y=x+\tau v$ and $w$ is the unique solution of one of the two equivalent equations
\[
\partial_y \LL_\tau(x,y) + \partial_x \LL_\tau\bigl(y,\,y+\tau w\bigr)=0,
\]
or equivalently
\[
\partial_v L(x,v) + \tau\,\partial_x L(y,w) - \partial_v L(y,w)=0.
\]
In particular, $\Phi_\tau$ is a homeomorphism on $\mathbb{T}^d\times\mathbb{R}^d$.
\end{definition}

\begin{remark}
If  
\[
\left|\frac{\partial^2 L}{\partial x\,\partial v}\right|_{\mathbb{T}^d\times\mathbb{R}^d}
\le \beta,
\]
then a Tonelli Lagrangian is ferromagnetic and the map $\Phi_\tau$ is a $C^1$ diffeomorphism
(see \cite[Proposition 2.8]{Garibaldi}) 
\end{remark}

\begin{definition}
Let $X$ be a $C^1$ vector field on a smooth manifold $N$ and let $\varphi^t:N\to N$
be its flow. A compact $\varphi^t$--invariant set $\Lambda\subset N$ is called
\emph{(uniformly) hyperbolic for the flow} $\varphi^t$ if there exist:
\begin{itemize}
  \item[($i$)] a continuous $D\varphi^t$--invariant splitting of the tangent bundle over $\Lambda$,
  \[
    T_\Lambda N \;=\; E^s \oplus E^c \oplus E^u,
  \]
  where   $E^c$ is one-dimensional and coincides with the flow direction, i.e.,
    $E^c_x \;=\; \mathbb{R}X(x)$ for all $x\in\Lambda$;
  \item[($ii$)] constants $C>0$ and $\lambda\in(0,1)$ such that for all $t\ge 0$,
  \[
    \|D\varphi^t(x)\,v\| \le C\,\lambda^{t}\,\|v\|\quad \forall x\in\Lambda,\ \forall v\in E^s_x,
  \]
  and
  \[
    \|D\varphi^{-t}(x)\,v\| \le C\,\lambda^{t}\,\|v\|\quad \forall x\in\Lambda,\ \forall v\in E^u_x.
  \]
\end{itemize}
\end{definition}

\begin{remark}
The Aubry set $\widetilde{\mathcal{A}}_L$ is contained in the critical energy level
\[
\Sigma \;:=\; E^{-1}(\alpha(H)),
\]
where $E(x,v)=\partial_vL(x,v)\cdot v - L(x,v)$ denotes the energy, and $\alpha(H)$ is the critical value, see \eqref{HJ}.  The Aubry set is said to be hyperbolic if $\Lambda=\widetilde{\mathcal{A}}_L$ is a hyperbolic set for the restricted flow $\phi^t\!\mid_{\Sigma}$ in the sense of the preceding definition; namely, if there exist continuous $D\phi^t$--invariant subbundles
$E^s,E^u$ over $\widetilde{\mathcal{A}}_L$ such that
\[
T_{\widetilde{\mathcal{A}}_L}\Sigma \;=\; E^s \oplus \mathbb RX \oplus E^u,
\]
where $X$ denotes the Euler-Lagrange vector field, and such that
$D\phi^t$ contracts $E^s$ exponentially in forward time and $E^u$ exponentially in backward time.
\end{remark}

For more details on hyperbolic set of $C^1$ dynamical systems and on the shadowing lemma we refer to \cite[Chapter 18]{Katok}. 
\begin{definition}
Let $(X,d)$ be a metric space, $U \subset M$ open and $f:U \to X$.
For $a \in \mathbb{Z}\cup\{-\infty\}$ and $b \in \mathbb{Z}\cup\{\infty\}$, 
a sequence $\{x_n\}_{a<n<b}\subset U$ is called an \emph{$\varepsilon$-orbit} 
(or \emph{$\varepsilon$-pseudo-orbit}) for $f$ if
\[
d(x_{n+1}, f(x_n)) < \varepsilon \qquad \text{for all } a<n<b .
\]
It is said to be \emph{$\delta$-shadowed} by the orbit $\mathcal{O}(x)$ of some $x\in U$
if
\[
d(x_n, f^n(x)) < \delta \qquad \text{for all } a<n<b .
\]
definire $\mathcal{O}(x)=\{f^n(x)\}_n$ where $f^n=f\circ\dots\circ f$.
\end{definition}

\begin{theorem}[Shadowing Lemma]\label{shadowing}
Let $M$ be a Riemannian manifold, $U \subset M$ open, $f:U \to M$ a diffeomorphism,
and $\Lambda \subset U$ a compact hyperbolic set for $f$.
Then there exists a neighborhood $U(\Lambda) \supset \Lambda$ such that
whenever $\delta>0$ there exists an $\varepsilon>0$ so that every $\varepsilon$-orbit
in $U(\Lambda)$ is $\delta$-shadowed by an orbit of $f$.
\end{theorem}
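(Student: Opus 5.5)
The Shadowing Lemma is the classical result on hyperbolic sets for diffeomorphisms. I would prove it by recasting shadowing as a fixed point problem on the Banach space of bounded sequences and solving it with a contraction argument built from the hyperbolic splitting.

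\emph{Step 1: extend the splitting.} By continuity of the splitting $E^s\oplus E^u$ over $\Lambda$, I extend it to continuous, though not necessarily invariant, cone fields on a small neighborhood $U(\Lambda)$. I then choose an adapted (Mather) metric, so that on $\Lambda$ one has $C=1$, with contraction $\lambda$ on $E^s$ and expansion $\lambda^{-1}$ on $E^u$. By continuity of $Df$ and compactness, the cone conditions persist with rates $\lambda'\in(\lambda,1)$ on a small enough neighborhood.

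\emph{Step 2: local charts.} Given an $\varepsilon$-orbit $\{x_n\}\subset U(\Lambda)$, I use the exponential map to write candidate shadowing points as $y_n=\exp_{x_n}(w_n)$ with $w_n\in T_{x_n}M$ small. In these coordinates the conjugated maps are
\[
F_n(w):=\exp_{x_{n+1}}^{-1}\circ f\circ\exp_{x_n}(w)=A_n w+r_n(w),
\]
where $A_n\approx Df(x_n)$, $|F_n(0)|\le C\varepsilon$, and $\mathrm{Lip}(r_n)$ is small on small balls by uniform $C^1$ continuity on the compact closure of $U(\Lambda)$. The shadowing condition $y_{n+1}=f(y_n)$ becomes $w_{n+1}=F_n(w_n)$ for all $n$. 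This is a zero problem for the operator $\mathcal T(w)_n:=w_{n+1}-F_n(w_n)$ on $\ell^\infty$ sequences.

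\emph{Step 3: invert the linear part (main obstacle).} The key and hardest step is to show that $\mathcal D w:=(w_{n+1}-A_n w_n)_n$ is invertible on $\ell^\infty$, with inverse norm bounded uniformly in the pseudo-orbit. Here the pseudo-orbit lies only near $\Lambda$, so the $A_n$ are not exactly hyperbolic. I would first handle an exactly split sequence of block-hyperbolic maps using the explicit formula: sum the stable components forward and the unstable components backward, giving geometric series with bound $(1-\lambda')^{-1}$. Small perturbations of such a dichotomy are then absorbed by a Neumann series, since exponential dichotomies are robust. Concretely, I project $A_n$ onto the splitting at a nearby point of $\Lambda$, which is what makes the bound uniform.

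\emph{Step 4: contraction.} I write $w=\mathcal D^{-1}\bigl((F_n(0))_n+(r_n(w_n)-r_n(0))_n\bigr)$ as a fixed point equation on the ball of radius $\delta$ in $\ell^\infty$. The map is a contraction once $\|\mathcal D^{-1}\|\,\mathrm{Lip}(r)<1/2$, which holds by shrinking $\delta$. It maps the ball into itself once $\|\mathcal D^{-1}\|C\varepsilon<\delta/2$, which fixes $\varepsilon(\delta)$. The fixed point gives a true orbit $y_n=f^n(y_0)$ with $d(x_n,y_n)<\delta$. Finite or one-sided pseudo-orbits are handled by extending them trivially, or by the same argument on the corresponding index range.
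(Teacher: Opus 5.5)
The paper gives no proof of this statement. It is the classical shadowing theorem for hyperbolic sets of diffeomorphisms, stated with a pointer to Katok--Hasselblatt, Chapter 18, and Palmer's monograph is also invoked in the proof of \Cref{liminf_Aubry}. Your sketch is the standard functional-analytic proof, essentially Palmer's route, and it is correct: a uniform exponential dichotomy for the linearizations $A_n$ along the pseudo-orbit, followed by a contraction in $\ell^\infty$.

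Compared with the bare citation, your route gives quantitative information:
\begin{itemize}
\item the shadowing is Lipschitz, $\sup_n d(x_n,y_n)\le 2\|\mathcal D^{-1}\|\,C\varepsilon$;
\item the shadowing orbit is locally unique for bi-infinite pseudo-orbits;
\item all constants depend only on the dichotomy constants and $C^1$ moduli, so the same $\varepsilon$--$\delta$ relation holds uniformly for maps $C^1$-close to $f$.
\end{itemize}
This last uniformity is what the proof of \Cref{liminf_Aubry} implicitly needs. There, shadowing is applied to the $i$-dependent sectional maps $P^{(i)}_{\alpha\beta}$, not to a single fixed diffeomorphism, and the statement for a single $f$ does not provide uniformity in $i$.

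Two details should be made explicit.
\begin{itemize}
\item The points $p_n\in\Lambda$ whose splittings you use do not form an $f$-orbit; only $f(p_n)$ is close to $p_{n+1}$. So the dichotomy of the block-diagonal model must come from one-step estimates, or from blocks of $N$ steps with $C\lambda^N<1$. This is exactly why the adapted metric of Step 1 is needed: with a constant $C>1$, the products $B^s_{n-1}\cdots B^s_k$ are not controlled by the hyperbolicity estimates on $\Lambda$.
\item ``Extending finite pseudo-orbits trivially'' should mean extending the sequence of local maps $F_n$ by a linear hyperbolic model at the endpoints. Continuing the pseudo-orbit by $f$-iterates does not work, since those may leave $U(\Lambda)$. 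Your alternative, a bounded right inverse of $\mathcal D$ on the finite or half-line index range, works as stated.
\end{itemize}
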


Finally, in order to prove \Cref{liminf_Aubry} we need the following definition.

\begin{definition}
	Let $V \subset \mathbb{T}^d \times \mathbb{R}^d$ be compact, and let $\tau>0$ be such that the discrete Euler--Lagrange map $\Phi_\tau$ is well defined. The maximal invariant subset of $V$ is defined by
	\[
	\Inv(V,\Phi_\tau)
	:=
	\{ z \in V : \Phi_\tau^n(z) \in V \ \text{for all } n \in \mathbb{Z} \}.
	\]
	A compact $\Phi_\tau$-invariant set $\Lambda_\tau$ is said to be isolated if there exists a compact neighborhood $V$ such that
	\[
	\Lambda_\tau = \Inv(V,\Phi_\tau).
	\]
	A family $\{\Lambda_\tau\}_{0<\tau<\tau_0}$ of compact $\Phi_\tau$-invariant sets is said to be uniformly isolated if there exists a compact set $V \subset \mathbb{T}^d \times \mathbb{R}^d$ such that
	\[
	\Lambda_\tau = \Inv(V,\Phi_\tau)
	\qquad \text{for every } 0<\tau<\tau_0.
	\]
\end{definition}

\end{document}